\numberwithin{equation}{section}
\newtheorem{theorem}{Theorem}[section]
\newtheorem{lemma}[theorem]{Lemma}
\newtheorem{proposition}[theorem]{Proposition}
\newtheorem{corollary}[theorem]{Corollary}
\theoremstyle{definition}
\newtheorem{definition}[theorem]{Definition}
\newtheorem{example}[theorem]{Example}
\newtheorem{notation}[theorem]{Notation}
\newtheorem{notation and remark}[theorem]{Notation and Remark}
\newtheorem{reminder}[theorem]{Reminder}
\newtheorem{remark}[theorem]{Remark}
\newcommand\Proj{\operatorname{Proj}}
\newcommand\Spec{\operatorname{Spec}}
\newcommand\Hom{\operatorname{Hom}}
\newcommand\Ext{\operatorname{Ext}}
\newcommand\Tor{\operatorname{Tor}}
\newcommand\depth{\operatorname{depth}}
\newcommand\codim{\operatorname{codim}}
\newcommand\beg{\operatorname{beg}}
\newcommand\reg{\operatorname{reg}}
\newcommand\sreg{\operatorname{sreg}}
\newcommand\Ker{\operatorname{\Ker}}
\newcommand\e{\operatorname{end}}
\newcommand\Ass{\operatorname{Ass}}
\newcommand\Nor{\operatorname{Nor}}
\newcommand\Sing{\operatorname{Sing}}
\newcommand\soc{\operatorname{soc}}
\newcommand\CM{\operatorname{CM}}
\newcommand\red{\operatorname{red}}
\newcommand\sat{\operatorname{sat}}
\newcommand\ch{\operatorname{Char}}
\begin{document}
\author[M. Brodmann \and P. Schenzel]{Markus Brodmann \quad Peter Schenzel}
\title[Curves of maximal regularity]{Projective Curves with maximal
regularity and applications to syzygies and surfaces}

\thanks{The second named author was partially supported by Swiss National 
Science Foundation (Project No. 20 - 111762 )}


\address{Universit\"at Z\"urich, Institut f\"ur Mathematik,
Winterthurer Str. 190, CH -- 8057 Z\"urich, Schwitzerland}

\email{markus.brodmann@math.uzh.ch}

\address{Martin-Luther-Universit\"at Halle-Wittenberg,
Institut f\"ur Informatik, D --- 06 099 Halle (Saale),
Germany}

\email{peter.schenzel@informatik.uni-halle.de}

\subjclass[2000]{Primary: 14H45; 14Q10 ; Secondary: 14J25}

\keywords{curves; surfaces; maximal regularity}

\begin{abstract}
We first show that the union of a projective curve with one of its 
extremal secant lines  satisfies the linear general position principle 
for hyperplane sections. We use this to give an improved approximation 
of the Betti numbers of curves ${\mathcal C} \subset \mathbb P^r_K$ 
of maximal regularity with $\deg {\mathcal C} \leq 2r -3.$ In particular 
we specify the number and degrees of generators of the vanishing ideal 
of such curves. We apply these results to study surfaces 
$X \subset \mathbb P^r_K$ whose generic hyperplane section is a curve 
of maximal regularity. We first give a criterion for "an early decent 
of the Hartshorne-Rao function" of such surfaces. We use this criterion 
to give a lower bound on the degree for a class of these surfaces. Then, 
we study surfaces $X \subset \mathbb P^r_K$ for which 
$h^1(\mathbb P^r_K, {\mathcal I}_X(1))$ takes a value close to the 
possible maximum $\deg X - r +1.$ We give a lower bound on the degree
of such surfaces. We illustrate our results by a number of examples, 
computed by means of {\sc Singular}, 
which show a rich variety of occuring phenomena.
\end{abstract}

\maketitle

\section{Introduction}

The aim of this paper is to study projective curves of maximal regularity and applications
to certain types of projective surfaces. Much emphasis will be given to the 
computation of examples which illustrate the proved results. 

We begin with an investigation on curves of maximal regularity and their extremal
secant lines.
To be more precise, let ${\mathcal C} \subset \mathbb P^r_K$ be a non-degenerate irreducible projective curve in projective $r$-space over the algebraically closed field $K,$ with $r \geq 3.$ Let $d := \deg {\mathcal C}$ denote the degree of $\mathcal C.$ Assume that $d > r + 1$ and that $\mathcal C$ is of {\sl maximal regularity}, so that
\[
\reg {\mathcal C} = d - r +2.
\]
(Keep in mind that according to [GLP] we always have $\reg {\mathcal C} \leq  d - r +2 .$) In this extremal situation it is known that $\mathcal C$ is smooth and rational and has a $(d-r+2)$-secant line $\mathbb L.$ We fix such an {\sl extremal secant line} $\mathbb L = \mathbb P^1_K \subset \mathbb P^r_K,$ so  that
\[
\lambda ({\mathcal O}_{{\mathcal C} \cap \mathbb L}) = d - r + 2.
\]
In [BS2] we have shown that the scheme ${\mathcal C} \cup \mathbb L \subset \mathbb P^r_K$ plays a crucial r\^ole for the understanding of the curve $\mathcal C$ in particular its syzygetic behaviour. We first take up this idea and prove that in some sense the scheme ${\mathcal C} \cup \mathbb L$ behaves like an irreducible curve, namely (cf. Corollary \ref{2.6}):

\begin{theorem} \label{1.1} Let $r > 3.$ Then the generic hyperplane section $({\mathcal C} \cup \mathbb L) \cap \mathbb H \subset \mathbb H = \mathbb P^{r-1}_K$ of ${\mathcal C} \cup \mathbb L$ is a reduced scheme of $d+1$ points in linearly general position.
\end{theorem}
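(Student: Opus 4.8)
The plan is to deduce the assertion from two ingredients: the classical general position theorem applied to the curve $\mathcal C$ by itself, together with a short degree count that disposes of the single extra point contributed by the secant line $\mathbb L$. Fix a generic hyperplane $\mathbb H = \mathbb P^{r-1}_K$ and put $\Gamma := \mathcal C \cap \mathbb H$, $P := \mathbb L \cap \mathbb H$ and $\ell := d - r + 2 = \length(\mathcal O_{\mathcal C \cap \mathbb L})$. For generic $\mathbb H$ the hyperplane meets the smooth curve $\mathcal C$ transversally in $d$ reduced points (Bertini), meets $\mathbb L$ in the single reduced point $P$, and avoids the finite support of $\mathcal C \cap \mathbb L$; hence $P \notin \Gamma$ and $({\mathcal C}\cup\mathbb L)\cap\mathbb H = \Gamma\cup\{P\}$ consists of $d+1$ distinct reduced points. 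Since $d > r+1$ we have $d+1 \ge r+2$, so $\Gamma\cup\{P\}$ is in linearly general position in $\mathbb H$ if and only if every $r$-element subset spans $\mathbb H$. The $r$-subsets contained in $\Gamma$ span $\mathbb H$ by the general position theorem for the irreducible non-degenerate curve $\mathcal C$; thus it remains to treat the $r$-subsets containing $P$, i.e. to prove the assertion $(\ast)$: for every choice of $r-1$ distinct points $P_1,\dots,P_{r-1}$ of $\Gamma$ one has $P \notin \langle P_1,\dots,P_{r-1}\rangle$.

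To prove $(\ast)$, suppose to the contrary that $P \in M := \langle P_1,\dots,P_{r-1}\rangle$. By the general position theorem the $r-1$ points $P_1,\dots,P_{r-1}$ of $\Gamma$ (note $r-1 \le d$) are linearly independent, so $M \cong \mathbb P^{r-2}$. Now $M \subseteq \mathbb H$ while $\mathbb L \cap \mathbb H = \{P\}$, hence $\mathbb L \not\subseteq M$ and $\mathbb L \cap M = \{P\}$; therefore $N := \langle M, \mathbb L\rangle$ has dimension $(r-2)+1-0 = r-1$, i.e. $N$ is a hyperplane of $\mathbb P^r_K$. Since $\mathbb L \subseteq N$, we get $\mathcal C \cap \mathbb L \subseteq \mathcal C \cap N$ as closed subschemes of $\mathcal C$; moreover $P_1,\dots,P_{r-1} \in M \subseteq N$ are reduced points of $\mathcal C$ lying off $\mathbb L$ (because $P_i \neq P$ and $\mathbb L\cap\mathbb H = \{P\}$), hence they are not in $\Supp(\mathcal C \cap \mathbb L)$. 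Consequently $\mathcal C \cap N$ contains the length-$\ell$ scheme $\mathcal C \cap \mathbb L$ together with $r-1$ further reduced points with support disjoint from it, so $\length(\mathcal O_{\mathcal C \cap N}) \ge \ell + (r-1) = d+1$. But $\mathcal C$ is non-degenerate, so $\mathcal C \not\subseteq N$, and then $\mathcal C \cap N$ is a divisor of degree $d$ on $\mathcal C$, i.e. has length $d$. This contradiction proves $(\ast)$, and with it the theorem.

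I expect the genuine obstacle to lie entirely in the first ingredient, namely the general position (uniform position) statement for the generic hyperplane section of $\mathcal C$ itself: in characteristic zero this is the classical theorem, but in positive characteristic one must exploit that a curve of maximal regularity is smooth and rational — indeed an isomorphic projection of a rational normal curve — to guarantee that $\mathcal C \cap \mathbb H$ is in linearly general position, and this is presumably where the structural facts recalled above, the appeal to \cite{BS2}, and the hypothesis $r>3$ in Corollary \ref{2.6} really enter. The degree count in the second step, by contrast, is formal and uses only that $\mathbb L$ is an $\ell$-secant with $\ell = d-r+2$. Finally, one may record that the argument in fact yields the sharper geometric statement that no hyperplane of $\mathbb P^r_K$ containing $\mathbb L$ passes through $r-1$ of the points of $\mathcal C$ situated off $\mathbb L$, which is precisely the feature that makes ${\mathcal C}\cup\mathbb L$ behave, for hyperplane sections, like an irreducible non-degenerate curve.
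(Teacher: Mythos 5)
Your proof is correct, and it reaches the conclusion by a more direct route than the paper. The paper's argument (Lemmas \ref{2.2} and \ref{2.3}, Proposition \ref{2.5}) passes through the linear projection $\pi$ with center $\mathbb L$: one first shows that $\overline{\pi(\mathcal C\setminus\mathbb L)}$ is a rational normal curve in $\mathbb P^{r-2}_K$ and that $\pi$ is injective on $\mathcal C\setminus\mathbb L$, and then deduces that the span of any $r-1$ points of $\mathcal C\setminus\mathbb L$ is an $(r-2)$-plane disjoint from $\mathbb L$, which by Reminder \ref{2.4}~(B) is exactly what is needed to adjoin the point $\mathbb L\cap\mathbb H$. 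Your step $(\ast)$ proves the same disjointness statement without the projection machinery, by the Bezout-type count $\lambda(\mathcal O_{\mathcal C\cap N})\ge (d-r+2)+(r-1)=d+1>d$ for a hyperplane $N\supseteq\mathbb L$ through $r-1$ further points of $\mathcal C$; this is in fact the same arithmetic that underlies the paper's Lemma \ref{2.2} (where $\deg\mathcal C'\le d-(d-r+2)=r-2$ is the key inequality), but packaged so that Lemmas \ref{2.2} and \ref{2.3} are not needed at all. What the paper's longer route buys is the auxiliary structural fact that $\pi$ restricts to an isomorphism of $\mathcal C\setminus\mathbb L$ onto an open subset of a rational normal curve, which is of independent interest; what yours buys is brevity and the explicit observation that no hyperplane through $\mathbb L$ can contain $r-1$ points of $\mathcal C$ off $\mathbb L$. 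Both arguments rest on the same external input, namely that the generic hyperplane section of $\mathcal C$ itself is reduced and in linearly general position; the paper also takes this for granted (it appears as a hypothesis in Proposition \ref{2.5} and is absorbed into the word ``generic'' in Corollary \ref{2.6}), and you are right to flag that in positive characteristic this is the one point requiring the special structure of curves of maximal regularity rather than the bare general position theorem.
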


Now, on use of the $N_p$-Theorem of Green and Lazarsfeld \cite{GL} we then may approximate the Betti numbers of the homogeneous vanishing ideal $I \subset S := K[x_0,\ldots,x_r]$ of the curve $\mathcal C,$ provided $d$ is not to large (cf. Theorem \ref{3.3})

\begin{theorem} \label{1.2} Let $r > 3$ and $r + 2 \leq d \leq 2 r -3.$ Then, for all $i \in \{1,\ldots,r\}$ we have
\[
\Tor_i^S(K, S/I) \simeq K^{u_i}(-i-1) \oplus K^{v_i}(-i-2) \oplus K^{\binom{r-1}{i-1}}(-i-d+r-1)
\]
with $v_i = 0$ for $1 \leq i \leq 2r - d -2$ and $u_1 = \binom{r+1}{2} - d -1.$
\end{theorem}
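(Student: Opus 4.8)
The plan is to deduce the resolution of $S/I$ from that of the auxiliary scheme $Y := \mathcal{C} \cup \mathbb{L} \subset \mathbb{P}^r_K$ together with that of its generic hyperplane section $\Gamma := Y \cap \mathbb{H}$, which by Theorem~\ref{1.1} is a set of $d+1$ points in linearly general position in $\mathbb{H} = \mathbb{P}^{r-1}_K$. Write $Z := \mathcal{C} \cap \mathbb{L}$, a subscheme of $\mathbb{L} \cong \mathbb{P}^1_K$ of length $m := d-r+2$. The two facts about $Y$ on which everything rests are: (a) $Y$ is arithmetically Cohen--Macaulay, $\reg Y = 3$, and $I_Y$ is generated by quadrics; (b) $\Gamma$ satisfies property $N_p$ of Green and Lazarsfeld \cite{GL}, for $p = 2(r-1)+1-(d+1) = 2r-d-2$. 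Fact (b) is immediate from Theorem~\ref{1.1} and the form of the $N_p$-theorem for points in linearly general position. The link between $\mathcal{C}$, $\mathbb{L}$ and $Y$ is provided by the two exact sequences of ideal sheaves coming from $\mathcal{I}_Y = \mathcal{I}_{\mathcal{C}} \cap \mathcal{I}_{\mathbb{L}}$, namely
\[
0 \to \mathcal{I}_Y \to \mathcal{I}_{\mathbb{L}} \to \mathcal{I}_{Z/\mathcal{C}} \to 0
\quad\text{and}\quad
0 \to \mathcal{I}_Y \to \mathcal{I}_{\mathcal{C}} \to \mathcal{I}_{Z/\mathbb{L}} \to 0 ,
\]
where $\mathcal{I}_{Z/\mathcal{C}}(n) \cong \mathcal{O}_{\mathbb{P}^1_K}(nd-m)$ (pushed forward from $\mathcal{C}$) and $\mathcal{I}_{Z/\mathbb{L}}(n) \cong \mathcal{O}_{\mathbb{P}^1_K}(n-m)$ (pushed forward from $\mathbb{L}$).

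To prove (a) I would feed the first of these sequences into the long exact cohomology sequence. Since $\mathbb{L}$ is a linear subspace, $H^1_*(\mathbb{P}^r_K,\mathcal{I}_{\mathbb{L}}) = 0$ and $H^2(\mathbb{P}^r_K,\mathcal{I}_{\mathbb{L}}(n)) = 0$ for $n\ge -1$, and the restriction map $H^0(\mathbb{P}^r_K,\mathcal{I}_{\mathbb{L}}(n)) \to H^0(\mathbb{P}^1_K,\mathcal{O}(nd-m))$ is surjective for all $n\ge 1$ (the $r-1$ linear forms defining $\mathbb{L}$ restrict on $\mathcal{C}$ to a basis of the $(r-1)$-dimensional space $H^0(\mathbb{P}^1_K,\mathcal{O}(d-m))$, by non-degeneracy of $\mathcal{C}$, and multiplication of sections on $\mathbb{P}^1_K$ is surjective). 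This yields $H^1_*(\mathbb{P}^r_K,\mathcal{I}_Y) = 0$, i.e. $Y$ is arithmetically Cohen--Macaulay; an analogous computation gives $H^2(\mathbb{P}^r_K,\mathcal{I}_Y(n)) = 0$ for $n\ge 1$ while $H^2(\mathbb{P}^r_K,\mathcal{I}_Y) \neq 0$ (indeed $p_a(Y) = m-1 > 0$), so that $\reg Y = 3$; as $Y$ is non-degenerate it follows that $I_Y$ has no linear form and is generated in degree $2$. This step is the technical heart of the matter, and draws on [BS2].

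Granting (a) and (b), I would assemble the resolution in two stages. First, because $Y$ is arithmetically Cohen--Macaulay, a general linear form $x_r$ is a non-zero-divisor on $S/I_Y$ and $(S/I_Y)/x_r(S/I_Y) \cong S'/I_\Gamma$ with $S' := S/x_rS$, so the graded Betti numbers of $S/I_Y$ over $S$ agree with those of $S'/I_\Gamma$ over $S'$; using $\reg Y = 3$ (hence $\reg(S'/I_\Gamma) = 2$), generation in degree $2$, and (b), one gets
\[
\Tor_i^S(K,S/I_Y) \simeq K^{u_i}(-i-1)\oplus K^{v_i}(-i-2), \qquad v_i = 0 \ \text{ for }\ 1\le i\le 2r-d-2,
\]
with $u_1 = \dim_K (I_\Gamma)_2 = \binom{r+1}{2}-(d+1)$ (here $d+1\le 2(r-1)+1$ points in linearly general position impose independent conditions on quadrics, so $\dim_K (S'/I_\Gamma)_2 = d+1$). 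Second, taking global sections of all twists of the second exact sequence above and using $H^1_*(\mathbb{P}^r_K,\mathcal{I}_Y) = 0$, one obtains a short exact sequence of graded $S$-modules
\[
0 \to I_Y \to I \to (S/I_{\mathbb{L}})(-m) \to 0 ,
\]
since $\Gamma_*(\mathcal{I}_{Z/\mathbb{L}}) = (S/I_{\mathbb{L}})(-m)$. The module $(S/I_{\mathbb{L}})(-m)$ is resolved by the Koszul complex on the $r-1$ linear forms defining $\mathbb{L}$, shifted by $m$, so $\Tor_j^S(K,(S/I_{\mathbb{L}})(-m))$ is $K^{\binom{r-1}{j}}$ concentrated in internal degree $j+m$. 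In the long exact $\Tor$-sequence of the last short exact sequence every connecting map $\Tor_{j+1}^S(K,(S/I_{\mathbb{L}})(-m)) \to \Tor_j^S(K,I_Y) \cong \Tor_{j+1}^S(K,S/I_Y)$ vanishes for degree reasons, since the source sits in degree $j+1+m\ge j+5$ while the target sits in degrees $j+2$ and $j+3$ (recall $m = d-r+2\ge 4$). Therefore $\Tor_j^S(K,I)\simeq \Tor_j^S(K,I_Y)\oplus K^{\binom{r-1}{j}}(-j-m)$, and re-indexing via $\Tor_i^S(K,S/I)\cong\Tor_{i-1}^S(K,I)$ together with $-(i-1)-m = -i-d+r-1$ gives the claimed isomorphism, with the same $u_i$ and $v_i$.

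The main obstacle is step (a): showing that $\mathcal{C}\cup\mathbb{L}$ is arithmetically Cohen--Macaulay of regularity exactly $3$ — in particular that its Hartshorne--Rao module vanishes, whereas that of $\mathcal{C}$ does not. Once (a) is in hand, the passage to $\mathcal{C}$ is formal: the $N_p$-theorem pins down the two low strands of the resolution, the Koszul complex of $\mathbb{L}$ produces the ``tail'' strand $K^{\binom{r-1}{i-1}}(-i-d+r-1)$, and the hypothesis $d\ge r+2$ (equivalently $m\ge 4$) keeps these strands from interfering, so that all the relevant connecting homomorphisms vanish and the three contributions split off.
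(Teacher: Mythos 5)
Your proposal follows essentially the same route as the paper: the genuinely new ingredient in both is that the generic hyperplane section of ${\mathcal C}\cup\mathbb L$ is a set of $d+1=2(r-1)+1-(2r-d-2)$ points in linearly general position (Theorem \ref{1.1}), so the Green--Lazarsfeld theorem gives property $N_{2r-d-2}$, and since ${\mathcal C}\cup\mathbb L$ is arithmetically Cohen--Macaulay the Betti numbers lift, killing the $(-i-2)$-strand in the stated range. The difference is in how you treat the two inputs that the paper simply imports from [BS2]: you re-derive the splitting $\Tor_i^S(K,S/I)\simeq\Tor_i^S(K,S/J)\oplus K^{\binom{r-1}{i-1}}(-i-d+r-1)$ (the paper's citation of [BS2, Proposition 4.1]) from the sequence $0\to I_Y\to I\to (S/I_{\mathbb L})(-m)\to 0$ and the Koszul complex, with the connecting maps dying for degree reasons because $m\geq 4$; this is correct and makes the mechanism transparent. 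For the arithmetic Cohen--Macaulayness of ${\mathcal C}\cup\mathbb L$ (the paper's [BS2, Proposition 3.5]) your sketch has one soft spot: the surjectivity of $H^0(\mathcal I_{\mathbb L}(n))\to H^0(\mathbb P^1,\mathcal O(nd-m))$ for $n\geq 2$ does not follow merely from surjectivity of multiplication of complete linear systems on $\mathbb P^1$, because $S_{n-1}$ restricts only to the proper subspace $V^{n-1}$ of $H^0(\mathcal O_{\mathcal C}(n-1))$ generated by the $(r+1)$-dimensional, non-complete system $V=S_1|_{\mathcal C}\subsetneq H^0(\mathcal O_{\mathbb P^1}(d))$ (the curve is not linearly normal). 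One needs a multiplication lemma for base-point-free subspaces, and this is exactly where the degree restriction ($d\leq 2r-1$ in [BS2]) enters. Since you explicitly defer to [BS2] at that point, as the paper does, the argument as a whole stands.
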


In particular we get (cf. Corollary \ref{3.3} (b)):

\begin{corollary} \label{1.3} Under the hypothesis of \ref{1.2} the vanishing ideal $I \subset S$ of $\mathcal C$ is minimally generated by $\binom{r+1}{2} -d -1$ quadrics and one form of degree $d-r+2.$
\end{corollary}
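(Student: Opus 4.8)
The plan is to deduce this directly from Theorem \ref{1.2}, since the minimal homogeneous generators of a graded ideal $I \subseteq S$ are governed entirely by $\Tor_1^S(K, S/I)$. Recall that if $F_\bullet \to S/I \to 0$ is the minimal graded free resolution, then $F_0 = S$ and $F_1 = \bigoplus_j S(-j)^{\beta_{1,j}}$, where $\beta_{1,j}$ is precisely the number of minimal generators of $I$ in degree $j$; moreover $\Tor_1^S(K, S/I) \simeq F_1 \otimes_S K = \bigoplus_j K(-j)^{\beta_{1,j}}$. So it suffices to compute the graded $K$-vector space $\Tor_1^S(K, S/I)$ and to read off the shifts that occur.

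Next I would specialize the formula of Theorem \ref{1.2} to $i = 1$. The hypothesis of \ref{1.2} includes $d \leq 2r - 3$, equivalently $1 \leq 2r - d - 2$, so the quoted vanishing $v_i = 0$ for $1 \leq i \leq 2r - d - 2$ applies at $i = 1$ and gives $v_1 = 0$. Using $u_1 = \binom{r+1}{2} - d - 1$, $\binom{r-1}{i-1} = \binom{r-1}{0} = 1$, and $-i - d + r - 1 = -(d - r + 2)$ at $i = 1$, Theorem \ref{1.2} then yields
\[
\Tor_1^S(K, S/I) \;\simeq\; K^{\binom{r+1}{2} - d - 1}(-2) \;\oplus\; K\bigl(-(d - r + 2)\bigr).
\]
Reading off the shifts, $I$ has exactly $\binom{r+1}{2} - d - 1$ minimal generators in degree $2$, exactly one minimal generator in degree $d - r + 2$, and none in any other degree, which is the assertion.

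Two small remarks round things off. First, since $d \geq r + 2$ we have $d - r + 2 \geq 4 > 2$, so the quadrics and the extra form genuinely sit in different degrees, and no further verification of minimality is needed beyond what the $\Tor$ already encodes. Second, one should check that the count $\binom{r+1}{2} - d - 1$ of quadrics is actually positive under the standing hypotheses: from $d \leq 2r - 3$ one gets $\binom{r+1}{2} - d - 1 \geq \tfrac{1}{2}(r^2 - 3r + 4) > 0$, the quadratic $r^2 - 3r + 4$ having negative discriminant, so there is always at least one quadric present. There is essentially no obstacle here; the entire content has been absorbed into Theorem \ref{1.2}, and this corollary is just the extraction of its $i = 1$ row together with the elementary observation that the hypothesis $d \leq 2r - 3$ is exactly what forces the degree-$3$ contribution $v_1$ to vanish.
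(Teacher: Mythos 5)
Your proposal is correct and matches the paper's own argument: the paper proves this (as Corollary \ref{3.3}~(b)) precisely by reading off the $i=1$ row of Theorem \ref{3.2}, where $v_1=0$ because $d\leq 2r-3$, giving $\Tor_1^S(K,S/I)\simeq K^{u_1}(-2)\oplus K(-d+r-2)$ with $u_1=\binom{r+1}{2}-d-1$. Your additional remarks (the two degrees are distinct, the quadric count is positive) are harmless extras not needed for the conclusion.
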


In the second part of our paper we apply these results to certain surfaces. Our first aim is to study  a fairly technical
issue concerning
non-degenerate irreducible projective surfaces $X \subset
\mathbb P^r_K$ of degree $d \leq 2r - 4.$  Namely, we consider the
"least place at which the Hartshorne-Rao function $n \mapsto h^1(\mathbb P^r_K, {\mathcal I}_X(n))$
of $X$ definitively begins to decent", that is the invariant
\[
\delta(X) := \inf \{m \in \mathbb Z | h^1(\mathbb P^r_K, {\mathcal I}_X(n)) \leq
\max \{h^1(\mathbb P^r_K, {\mathcal I}_X(n-1)) -1, 0\} \text{ for all } m > n\}.
\]
It follows by Mumford's Lemma on the decent of the Hartshorne-Rao function (cf. \cite{M}) that
$\delta(X) \leq d-r+2.$ We are interested to find criteria which guarantee that this
inequality is strict. One sufficient condition surely would be that $X$ satisfies the
Regularity Conjecture of Eisenbud and Goto \cite{EG}, that is the inequality $\reg X
\leq d-r+3$ (cf. Lemma \ref{4.5}). Again by Mumford's Lemma one has $\delta (X) < d-r+2$ if the generic
hyperplane section of $X$ is not of maximal regularity (cf. Lemma \ref{4.5}). We shall
prove another criterion, namely (cf. Corollary \ref{4.6}):

\begin{corollary} \label{1.4} Let $r > 4$ and $r < d \leq 2r -4.$ Then
$
\delta(X) \leq  h^1(\mathbb P^r_K, {\mathcal I}_X(1)) + h^1(X, {\mathcal O}_X).
$
\end{corollary}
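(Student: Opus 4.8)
The plan is to descend to a general hyperplane section and to play off the Hartshorne--Rao module of the resulting curve against that of the surface. By Mumford's Lemma we already know $\delta(X)\le d-r+2$, so we may assume from the start that $h^1(\mathbb P^r_K,{\mathcal I}_X(1))+h^1(X,{\mathcal O}_X)\le d-r+1$. Fix a general hyperplane $\mathbb H=\mathbb P^{r-1}_K$ and a linear form $x$ cutting it out. Since the vanishing ideal of $X$ is prime and $X$ is non-degenerate, $x$ is a non-zerodivisor on the homogeneous coordinate ring of $X$, and (as $r\ge 3$) Bertini's theorem shows that $C:=X\cap\mathbb H\subseteq\mathbb H$ is an integral non-degenerate curve of degree $d$ with $(r-1)+2\le d\le 2(r-1)-2$, while the sequence
\[
0\longrightarrow{\mathcal I}_X(n-1)\xrightarrow{\;x\;}{\mathcal I}_X(n)\longrightarrow{\mathcal I}_{C/\mathbb H}(n)\longrightarrow 0
\]
is exact for every $n\in\mathbb Z$ (the cokernel sheaf being ${\mathcal I}_{C/\mathbb H}$ because for general $\mathbb H$ the natural map ${\mathcal I}_X\otimes{\mathcal O}_{\mathbb H}\to{\mathcal I}_{C/\mathbb H}$ is an isomorphism).

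The cohomological comparison is the heart of the matter. For $r\ge 3$ one has $H^i(\mathbb P^r_K,{\mathcal O}(n))=0$ for $i\in\{1,2\}$ and all $n$, whence (from $0\to{\mathcal I}_X\to{\mathcal O}_{\mathbb P^r_K}\to{\mathcal O}_X\to0$) $H^1(\mathbb P^r_K,{\mathcal I}_X(n))=0$ for $n\le 0$ (using $H^0(X,{\mathcal O}_X(n))=0$ for $n<0$ and $H^0(X,{\mathcal O}_X)=K$) and $H^2(\mathbb P^r_K,{\mathcal I}_X(n))\cong H^1(X,{\mathcal O}_X(n))$. Taking the long exact cohomology sequence of the displayed short exact sequence in degree $n=1$ and using $H^1(\mathbb P^r_K,{\mathcal I}_X(0))=0$ gives the exact sequence
\[
0\longrightarrow H^1(\mathbb P^r_K,{\mathcal I}_X(1))\longrightarrow H^1(\mathbb H,{\mathcal I}_{C/\mathbb H}(1))\longrightarrow H^1(X,{\mathcal O}_X),
\]
and therefore the key inequality
\[
h^1(\mathbb H,{\mathcal I}_{C/\mathbb H}(1))\ \le\ h^1(\mathbb P^r_K,{\mathcal I}_X(1))+h^1(X,{\mathcal O}_X).
\]

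It remains to bound $\delta(X)$ by $h^1(\mathbb H,{\mathcal I}_{C/\mathbb H}(1))$, and here I would distinguish two cases. If $C$ is of maximal regularity, i.e. $\reg C=d-r+3$, then (since $d>r$) the statement recalled in the introduction, applied to $C\subseteq\mathbb P^{r-1}_K$, shows that $C$ is smooth and rational of degree $d$; hence ${\mathcal O}_C(1)\cong{\mathcal O}_{\mathbb P^1_K}(d)$ and, $C$ being non-degenerate,
\[
h^1(\mathbb H,{\mathcal I}_{C/\mathbb H}(1))=h^0(C,{\mathcal O}_C(1))-h^0(\mathbb H,{\mathcal O}_{\mathbb H}(1))=(d+1)-r=d-r+1 .
\]
The early-descent criterion of Section~4 applies to $X$ in this case and yields $\delta(X)\le d-r+1$; combined with the inequality above this is exactly $\delta(X)\le h^1(\mathbb P^r_K,{\mathcal I}_X(1))+h^1(X,{\mathcal O}_X)$. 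If instead $C$ is not of maximal regularity, then $\reg C\le d-r+2$, so $H^1(\mathbb H,{\mathcal I}_{C/\mathbb H}(n))=0$ for $n\ge d-r+1$, and Lemma~\ref{4.5} gives $\delta(X)\le d-r+1$; one then reconciles this with the target bound by a further analysis of the displayed sequence in degrees $\ge d-r$, exploiting that vanishing.

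The step I expect to be the main obstacle is the early-descent criterion of Section~4 invoked in the principal case -- that is, improving the generic bound $\delta(X)\le d-r+2$ to $\delta(X)\le d-r+1$ precisely when the general hyperplane section is a curve of maximal regularity. This is where the special geometry of such curves (their extremal secant line and the resulting explicit Hartshorne--Rao function $n\mapsto\max\{d-r+2-n,0\}$) has to be brought to bear, for instance through the scheme $C\cup\mathbb L$ and Theorem~\ref{1.1}. A secondary, more routine, point is the numerical bookkeeping in the non-maximal case needed to absorb the bound $d-r+1$ into $h^1(\mathbb P^r_K,{\mathcal I}_X(1))+h^1(X,{\mathcal O}_X)$.
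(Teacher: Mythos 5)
Your opening comparison sequence is fine (for general $\mathbb H$ the sequence $0\to{\mathcal I}_X(n-1)\to{\mathcal I}_X(n)\to{\mathcal I}_{{\mathcal C}/\mathbb H}(n)\to 0$ and the resulting inequality $h^1(\mathbb H,{\mathcal I}_{{\mathcal C}/\mathbb H}(1))\le h^1(\mathbb P^r_K,{\mathcal I}_X(1))+h^1(X,{\mathcal O}_X)$ are correct, as is the computation $h^1(\mathbb H,{\mathcal I}_{{\mathcal C}/\mathbb H}(1))=d-r+1$ when ${\mathcal C}$ has maximal regularity), but both branches of your case distinction contain a genuine gap. In the maximal-regularity case everything rests on the assertion that an ``early-descent criterion'' gives $\delta(X)\le d-r+1$; no such criterion has been established, and it is exactly as hard as the statement you are proving. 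In fact it conflicts with Theorem \ref{5.10}: for a surface of maximal sectional regularity and arithmetic depth one, $\delta(X)\le d-r+1$ forces $d>2r-5$, so on the sub-range $r<d\le 2r-5$ (nonempty for $r\ge 6$) your claimed criterion would amount to a non-existence statement for such surfaces that is nowhere proved -- maximal sectional regularity is precisely a situation where the descent of $h^1_A$ tends to start as late as possible, not earlier. In the other case you only obtain $\delta(X)\le d-r+1$, which is weaker than the asserted bound whenever $h^1(\mathbb P^r_K,{\mathcal I}_X(1))+h^1(X,{\mathcal O}_X)<d-r+1$ (for instance a linearly normal surface with $h^1(X,{\mathcal O}_X)=0$ but $H^1(A)\ne 0$ in degrees $\ge 2$, for which the corollary asserts strict descent from degree $1$ on); the promised ``further analysis of the displayed sequence'' cannot close this, because the vanishing $H^1(\mathbb H,{\mathcal I}_{{\mathcal C}/\mathbb H}(n))=0$ you want to exploit is only available for $n\ge\reg{\mathcal C}-1$, far above the degree $h^1_A(1)+h^2_A(0)$ where descent must be shown to begin. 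A single hyperplane section simply does not see this invariant.

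The actual proof makes no case distinction on $\reg{\mathcal C}$ and uses the hypothesis $d\le 2r-4$ in an essential way, which your argument never does. One cuts by a generic pair of linear forms $(f,g)\in S_1^2$: the scheme $\Proj(A/(f,g)A)\subset\mathbb P^{r-2}_K$ is then a set of $d=2(r-2)+1-(2r-d-3)$ points in linearly general position, so by Green--Lazarsfeld it satisfies $N_{2r-d-3}$ with $2r-d-3\ge 1$; hence $H^0(A/(f,g)A)$ is generated in degree $2$, and the semi-uniform position bound gives $H^1(A/(f,g)A)_n=0$ for $n\ge 2$. Running these two facts through the cohomology sequences for the whole pencil $\lambda f+\mu g$ (Proposition \ref{4.3} (c), together with the descent lemma quoted there from \cite{B1}) yields the uniform statement of Proposition \ref{4.3} (d): $h^1_A(n)\le\max\{h^1_A(n-1)-1,0\}$ for all $n\ge h^1_A(m)+h^2_A(m-1)+m$. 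Taking $m=1$ and using $h^1_A(1)=h^1(\mathbb P^r_K,{\mathcal I}_X(1))$, $h^2_A(0)=h^1(X,{\mathcal O}_X)$ gives the corollary. If you want to salvage your plan, it is this two-form mechanism (quadric generation of the relevant socle/torsion module plus a pencil argument forcing strict descent) that has to replace the unproved ``early-descent criterion''.
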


Next we study {\sl surfaces of maximal sectional regularity}, that is projective surfaces
$X \subset \mathbb P^r_K$ whose generic hyperplane section is a curve of
maximal regularity. For such surfaces $\reg X$ takes at least the value $d-r+3,$ the maximally
possible value conjectured by Eisenbud and G\^oto. Neverless there are surfaces $X$
satisfying $\reg X = d-r+3$ which are not of maximal sectional regularity (cf. Remark \ref{5.4}
(B)). Surfaces of maximal sectional regularity have "a lot" of extremal secant lines in 
the sense of Bertin \cite{Be1},
(cf. Proposition \ref{5.5} and Corollaries \ref{5.6}, \ref{5.7}). We prove the following bound
on the degree $d$ of these surfaces (cf. Theorem \ref{5.10}).

\begin{theorem} \label{1.5} Assume that $4 < r < d$ and that $X$ is of maximal sectional regularity and of arithmetic depth one. Then $\delta (X) \leq d-r+1$ implies that $d > 2r-5.$
\end{theorem}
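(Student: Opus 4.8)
The plan is to prove the contrapositive: under the standing hypotheses I show that $d\le 2r-5$ forces $\delta(X)=d-r+2$, which contradicts $\delta(X)\le d-r+1$. Since $\delta(X)\le d-r+2$ by Mumford's Lemma, it suffices to produce a failure of the strict--descent inequality for the Hartshorne--Rao function at the place $d-r+1$; writing $f(n):=h^1(\mathbb{P}^r_K,\mathcal{I}_X(n))$, this amounts to showing
\[
f(d-r+1) > \max\{\,f(d-r)-1,\ 0\,\}.
\]
Note that $\reg X\ge \reg(X\cap\mathbb{H})=d-r+3$, since the general hyperplane section $C:=X\cap\mathbb{H}\subset\mathbb{H}=\mathbb{P}^{r-1}_K$ is a curve of maximal regularity of degree $d$; recall moreover that $C$ is then smooth and rational.

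First I would record the numerical invariants of $C$. Because $(r-1)+2\le d\le 2(r-1)-3$, Theorem \ref{1.2} applies to $C$ (with $r$ replaced by $r-1$) and Corollary \ref{1.3} gives that $I_C\subset \bar S:=K[x_0,\dots,x_{r-1}]$ is minimally generated by $\binom{r}{2}-d-1$ quadrics together with a single form of degree $d-r+3$; in particular $(I_C)_n$ is spanned by the products of those quadrics with forms of degree $n-2$ whenever $n\le d-r+2$. By graded local duality the module $\bigoplus_n H^1(\mathcal{I}_C(n))$ is the Matlis dual of $\Ext^{r-1}_{\bar S}(\bar S/I_C,\bar S(-r))$; comparing the degrees of its minimal generators (read off from $\Tor^{\bar S}_{r-1}$ in Theorem \ref{1.2}) with those of the relations, one obtains $h^1(\mathcal{I}_C(n))=0$ for $n\ge d-r+2$ and $h^1(\mathcal{I}_C(d-r+1))=1$. (The last equality also holds on general grounds: $\reg C=d-r+3$, while $h^2(\mathcal{I}_C(d-r))=h^1(\mathcal{O}_C(d-r))=0$ because $C$ is rational and $d-r\ge 1$.)

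Next I would use that $X$ has arithmetic depth one, so a general linear form $\ell$ is a non-zerodivisor on $A:=S/I_X$; with $B:=A/\ell A$ (whose sheafification on $\mathbb{H}$ is $\mathcal{O}_C$) the long exact local cohomology sequence of $0\to A(-1)\xrightarrow{\ell}A\to B\to 0$ gives, for every $n\ge 0$ and using $h^1(\mathcal{O}_C(n))=0$ (so that $\ell$ acts surjectively on $H^2_{\mathfrak{m}}(A)$ in those degrees),
\[
f(n)-f(n-1) = h^1(\mathcal{I}_C(n)) - e(n) - c(n),\qquad e(n):=\dim_K H^0_{\mathfrak{m}}(B)_n\ge 0,\quad c(n):=h^1(\mathcal{O}_X(n-1))-h^1(\mathcal{O}_X(n))\ge 0.
\]
Summing this identity from $n=d-r+1$ upward and inserting the data on $C$ yields $f(d-r)=\sum_{m\ge d-r+1}e(m)+h^1(\mathcal{O}_X(d-r))-1$ and $f(d-r+1)=\sum_{m\ge d-r+2}e(m)+h^1(\mathcal{O}_X(d-r+1))$. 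Hence the desired inequality can fail only in one of two ways: \textbf{(A)} $f(d-r)\ge 1$ and $e(d-r+1)+c(d-r+1)\ge h^1(\mathcal{I}_C(d-r+1))+1=2$; or \textbf{(B)} $f(d-r)=f(d-r+1)=0$, which forces $e(m)=0$ for $m\ge d-r+2$, $h^1(\mathcal{O}_X(n))=0$ for $n\ge d-r+1$, and $e(d-r+1)+c(d-r+1)=h^1(\mathcal{I}_C(d-r+1))=1$. It therefore remains to prove that $e(d-r+1)+c(d-r+1)\le 1$ and that case \textbf{(B)} cannot occur.

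This last estimate is the heart of the matter, and I expect it to be the main obstacle. For $c(d-r+1)$ one uses that $n\mapsto h^1(\mathcal{O}_X(n))$ is non-increasing for $n\ge 0$ with total drop $h^1(\mathcal{O}_X)$, and that $h^1(\mathcal{O}_X)$ is controlled by Corollary \ref{1.4} (which gives $\delta(X)\le h^1(\mathcal{I}_X(1))+h^1(\mathcal{O}_X)$) together with the identity above at $n=1$, namely $h^1(\mathcal{I}_X(1))=(d-r+1)-h^1(\mathcal{O}_X)+h^1(\mathcal{O}_X(1))$. For $e(d-r+1)=\dim_K(I_C/\bar I_X)_{d-r+1}$ with $\bar I_X:=(I_X+\ell S)/\ell S$: since $(I_C)_{d-r+1}$ is generated by the quadrics of $C$, this term is governed by how far the quadrics of $X$ fail to restrict onto those of $C$ --- measured by the cokernel of $H^0(\mathcal{I}_X(2))\to H^0(\mathcal{I}_C(2))$, which injects into $H^1(\mathcal{I}_X(1))$ --- and by the bound $\reg B\le \reg A=\reg X$. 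Case \textbf{(B)} is handled separately: there $\reg X=d-r+3$ is attained, hence "witnessed" in some $H^i_{\mathfrak{m}}(A)$; the vanishing $f(d-r+1)=0$ excludes the witness $i=1$, and the vanishing $h^2(\mathcal{O}_X(n))=0$ for $n\ge 0$ --- which follows by Serre duality from $K_X\cdot H=2g(C)-2-d=-2-d<0$, $X$ having rational hyperplane sections --- excludes $i=3$, so $h^1(\mathcal{O}_X(d-r))\ne 0$; combined with the relations already derived this pins all the invariants to a rigid numerical shape, which one then shows to be incompatible with maximal sectional regularity. Throughout, the hypothesis $d\le 2r-5$ is used in the form $2r-d-4\ge 1$, which is exactly the range in which the "$v_i=0$" clause of Theorem \ref{1.2} makes the minimal free resolution of $C$ --- and hence the correction terms $e$ and $c$ --- as small as possible.
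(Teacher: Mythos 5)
Your proposal has two problems, one of bookkeeping and one of substance. The bookkeeping issue: $\delta(X)\le d-r+1$ means the descent inequality $f(n)\le\max\{f(n-1)-1,0\}$ holds for all $n\ge d-r+2$; exhibiting a failure at $n=d-r+1$, which is your stated target, only yields $\delta(X)\ge d-r+1$ and does not contradict the hypothesis. So even if your program were carried out as written, it would prove a weaker statement. The paper avoids this entirely: it does not try to locate a failure of descent at a specific place, but shows that under $d\le 2r-5$ the multiplication $\ell\colon H^1(A)_{n-1}\to H^1(A)_n$ by a generic linear form is injective for all $n\le d-r+2$; together with the assumed descent for $n\ge d-r+2$ this forces $H^1(A)=0$, contradicting arithmetic depth one.

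The substantive gap is exactly the point you flag as ``the heart of the matter'' and leave unproved: the control of $e(n)=\dim_K H^0(A/\ell A)_n=\dim_K(0:_{H^1(A)}\ell)_{n-1}$ in degrees $n\le d-r+2$. This is Proposition \ref{5.9}(b) of the paper, and it is the only hard step of the proof. It is obtained by combining the epimorphism $I_{\mathcal C}\twoheadrightarrow H^0(A/\ell A)$ with Corollary \ref{3.3} applied to the generic hyperplane section ${\mathcal C}\subset\mathbb P^{r-1}_K$ (generation of $I_{\mathcal C}$ by quadrics and one form of degree $d-r+3$, with no cubics because $d\le 2(r-1)-3$), and then -- the delicate part -- excluding a degree-$2$ generator of $H^0(A/\ell A)$ by a socle argument: Kreuzer's lemma (Lemma \ref{5.8}) applied to $H^1(A)$, using $\soc H^1(A_{\mathcal C})\simeq K(r-d-1)$ from \cite{BS2}. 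Your sketched substitute (the cokernel of $H^0({\mathcal I}_X(2))\to H^0({\mathcal I}_{\mathcal C}(2))$ injecting into $H^1({\mathcal I}_X(1))$, plus $\reg (A/\ell A)\le\reg X$) does not deliver this: $h^1({\mathcal I}_X(1))$ may be as large as $d-r+1$, and no upper bound on $\reg X$ is available here -- only $\reg X\ge d-r+3$ is known, and your case (B) even asserts $\reg X=d-r+3$, which is unjustified for depth-one $X$; likewise the appeal to Serre duality with $K_X\cdot H$ is problematic since $X$ may be singular (the vanishing $h^2({\mathcal O}_X(n))=0$ for $n\ge -1$ should instead be quoted from Proposition \ref{4.9}(a)(iv)). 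Note finally that once $e(n)=0$ for $n\le d-r+2$ is known, your own identity already gives $f(n)\ge f(n-1)$ in that range, and the contradiction $H^1(A)=0$ follows at once; the whole analysis of $c(n)$, of $h^1({\mathcal O}_X)$ via Corollary \ref{1.4}, and your cases (A)/(B) is then unnecessary.
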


It follows in particular (cf. Corollary \ref{5.13}):

\begin{corollary} \label{1.6} Let $4 < r < d,$ assume that $X \subset \mathbb P^r_K$
is Cohen-Macaulay, of maximal sectional regularity and of arithmetic depth one. Then $d > 2r - 5.$
\end{corollary}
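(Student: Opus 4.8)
The plan is to show that the hypotheses force $\delta(X) \le d-r+1$, so that Theorem~\ref{1.5} yields $d > 2r-5$. Suppose, for a contradiction, that $d \le 2r-5$; then $4 < r < d \le 2r-4$, so Corollary~\ref{1.4} applies and gives
\[
\delta(X) \le h^1(\mathbb{P}^r_K, \mathcal{I}_X(1)) + h^1(X, \mathcal{O}_X).
\]
Hence it suffices to bound the right-hand side by $d-r+1$, and the bulk of the work goes into showing $h^1(X,\mathcal{O}_X)=0$.

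For the deficiency term I would argue as follows. The generic hyperplane section $\mathcal{C} := X \cap \mathbb{H}$ is, by definition of maximal sectional regularity, a curve of maximal regularity in $\mathbb{H} = \mathbb{P}^{r-1}_K$, hence --- as recalled for curves of maximal regularity in the introduction --- a smooth rational curve of degree $d$; thus $\mathcal{O}_{\mathcal{C}}(1) \cong \mathcal{O}_{\mathbb{P}^1}(d)$ has $d+1$ sections. Using $h^0(X,\mathcal{O}_X)=1$, the sequence $0 \to \mathcal{O}_X \to \mathcal{O}_X(1) \to \mathcal{O}_{\mathcal{C}}(1) \to 0$ gives $h^0(X,\mathcal{O}_X(1)) \le d+2$; and since $X$ is non-degenerate, the sequence $0 \to \mathcal{I}_X(1) \to \mathcal{O}_{\mathbb{P}^r_K}(1) \to \mathcal{O}_X(1) \to 0$ gives $h^1(\mathbb{P}^r_K,\mathcal{I}_X(1)) = h^0(X,\mathcal{O}_X(1)) - (r+1) \le d-r+1$.

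The vanishing $h^1(X,\mathcal{O}_X)=0$ is where the Cohen--Macaulay hypothesis does its work, and I expect this to be the delicate point. First, since $\mathcal{C}$ is \emph{smooth}, $X$ cannot be singular along a curve --- a one-dimensional component of $\Sing X$ would meet the generic hyperplane $\mathbb{H}$, forcing a singular point on $X \cap \mathbb{H}$ --- so $\dim \Sing X \le 0$ and $X$ is regular in codimension one. Being Cohen--Macaulay, $X$ also satisfies Serre's condition $(S_2)$, whence $X$ is normal. Second, the pencil of hyperplanes through a generic codimension-two linear subspace $\Lambda \subset \mathbb{P}^r_K$ exhibits $X$ as swept out by the curves $X \cap H$, the generic member being smooth rational; after resolving the base scheme $X \cap \Lambda$ one obtains a surface $\widetilde{X}$ fibred over $\mathbb{P}^1_K$ whose generic fibre is a smooth conic over $K(\mathbb{P}^1_K)$, hence --- by Tsen's theorem, $K(\mathbb{P}^1_K)$ being a $C_1$-field --- a rational curve with a rational point, i.e. $\cong \mathbb{P}^1$. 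Thus $\widetilde{X}$, and therefore $X$, is a rational surface. Finally, for a normal projective surface the Leray sequence of a desingularization $\widetilde{X}\to X$ embeds $H^1(X,\mathcal{O}_X)$ into $H^1(\widetilde{X},\mathcal{O}_{\widetilde{X}})$, which vanishes as $\widetilde{X}$ is smooth rational; so $h^1(X,\mathcal{O}_X)=0$.

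Combining the two bounds, $\delta(X) \le (d-r+1)+0 = d-r+1$, and Theorem~\ref{1.5} then contradicts the assumption $d \le 2r-5$; this proves $d>2r-5$. Two structural remarks: the Cohen--Macaulay hypothesis is used only to upgrade ``regular in codimension one'' (which is automatic from maximal sectional regularity) to ``normal,'' so the genuinely geometric ingredient is the rationality of $X$, resting on the ruling furnished by the pencil of hyperplane sections together with Tsen's theorem; one could alternatively attempt to extract $h^1(X,\mathcal{O}_X)=0$ cohomologically from the resolution data for $\mathcal{C}$ given by Theorem~\ref{1.2} and Corollary~\ref{1.3}, but the route via normality and rationality seems shortest. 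Note also that once $h^1(X,\mathcal{O}_X)=0$ is known, the hypothesis ``arithmetic depth one'' is in fact automatic, since arithmetic depth at least two would inject $H^1(\mathbb{H},\mathcal{I}_{\mathcal{C}}(1))$ --- of dimension $d-r+1>0$ --- into $H^1(X,\mathcal{O}_X)$. I would expect the author to package the normality--rationality step into a separate lemma before stating this corollary.
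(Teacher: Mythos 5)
Your proof is correct, and its overall skeleton coincides with the paper's: both reduce the claim to the bound $\delta(X)\leq d-r+1$ (via Corollary \ref{4.6}, i.e. $\delta(X)\leq h^1_A(1)+h^2_A(0)$) and then invoke Theorem \ref{5.10}; your explicit reduction to the case $d\leq 2r-5$ by contradiction even handles cleanly the degree hypothesis of Corollary \ref{4.6}, which the paper leaves implicit. Where you genuinely diverge is in the two ingredient bounds. For $h^1_A(1)\leq d-r+1$ you argue directly from the hyperplane-section sequence and non-degeneracy, whereas the paper quotes Reminder \ref{5.12} (B) (non-degeneracy of the maximal projecting surface) -- same content, different packaging. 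For the key vanishing $h^1(X,\mathcal O_X)=h^2_A(0)=0$ the paper stays inside its cohomological framework: by Remark \ref{5.4} (A) one has $\sigma(X)=0$ and $\#\Sing(X)<\infty$, then Proposition \ref{4.9} (c) (resting on results of \cite{B2}) gives $h^2_A(0)=e(X)$, and the Cohen--Macaulay hypothesis kills $e(X)$. You instead upgrade the finiteness of $\Sing(X)$ to normality via Serre's criterion (using CM for $S_2$), prove rationality of $X$ by the Noether--Enriques/Tsen argument applied to a generic pencil of (smooth rational) hyperplane sections, and conclude $h^1(X,\mathcal O_X)=0$ from the Leray sequence of a resolution. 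This geometric route is sound and remains characteristic-free (smoothness of the generic fibre follows from openness of the smooth locus plus flatness, and birational invariance of $h^1(\mathcal O)$ for smooth surfaces holds in all characteristics), but it imports heavier machinery (resolution of surface singularities, Tsen) than the paper's citation of \cite{B2}; what it buys is a transparent geometric explanation of why Cohen--Macaulayness forces the vanishing, independent of the invariant $e(X)$. Your closing remark that arithmetic depth one is automatic once $h^1(X,\mathcal O_X)=0$ is also correct.
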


It is well known that $h :=
h^1(\mathbb P^r_K, {\mathcal I}_X(1)) \leq d-r+1.$ In Section 6 we study surfaces $X$ with
"large $h$", that is with $d-r-1 \leq h \leq d-r+1.$ Using the concept of {\sl maximal
projecting surface $Y \subset \mathbb P^{r+h}_K$ for $X$} (cf. Reminder \ref{5.12}) and the
description of the possible types of surfaces $Z \subset \mathbb P^s_K$ of degree $s+1$ given
in \cite{B1} we prove (see Theorem 6.3 for more details):

\begin{theorem} \label{1.7} Let $4 < r < d.$ Then
\begin{itemize}
  \item[(a)] If $h = d-r+1,$ the surface $X$ is smooth and rational.
  \item[(b)] If $h = d-r,$ then either
       \begin{itemize}
         \item[(i)] $X$ is Cohen-Macaulay and the non-normal locus of $X$ is a straight line, or
         \item[(ii)] $X$ contains a single non-Cohen-Macaulay point and is normal outside this point.
        \end{itemize}
\item[(c)] If $h = d-r-1,$ we distinguish five different cases according to the type of the maximal projective surface $Y \subset \mathbb P^{r+h}_K$ for $X.$
\end{itemize}
\end{theorem}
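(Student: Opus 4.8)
The plan is to pass from $X$ to its \emph{maximal projecting surface} $Y\subseteq\mathbb P^{r+h}_K$ provided by Reminder~\ref{5.12}, that is, the surface obtained by re-embedding $X$ via the complete linear system $|\mathcal O_X(1)|$: thus $Y$ is non-degenerate and irreducible, $\deg Y=d$, and $X$ is a linear projection of $Y$ (indeed $Y\cong X$, since $\mathcal O_X(1)$ is very ample), so that the invariants occurring in the statement are governed by $Y$ together with the dictionary recorded in~\ref{5.12}. The decisive numerical point is
\[
\deg Y-\codim_{\mathbb P^{r+h}_K}Y=d-(r+h-2)=d-r-h+2 ,
\]
so that $h=d-r+1$, $h=d-r$ and $h=d-r-1$ correspond, respectively, to $Y$ being a surface of minimal degree, of almost minimal degree, and of degree $\dim\mathbb P^{r+h}_K+1$; in the last case $Y$ is one of the surfaces $Z\subseteq\mathbb P^s_K$ of degree $s+1$ classified in \cite{B1}.

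For part~(a) I would invoke the classical classification of surfaces of minimal degree. Apart from $\mathbb P^2_K$, quadrics and the Veronese surface in $\mathbb P^5_K$ (of degrees $1$, $2$ and $4$, excluded since $d>r>4$ forces $d\ge 6$), such a surface is a rational normal scroll $S(a,b)$ with $0\le a\le b$ and $a+b=d$. If $a=0$ then $Y$ is a cone over a rational normal curve of degree $d$, whose vertex has Zariski tangent space of dimension $d+1$; but through $X\cong Y\subseteq\mathbb P^r_K$ no point of $X$ can have tangent space of dimension exceeding $r<d+1$. Hence $a\ge 1$, so $Y=S(a,b)$ is smooth and rational, and therefore so is $X$.

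For part~(b) I would appeal to Fujita's classification of varieties of almost minimal degree, in the refined form available for surfaces (cf.\ \cite{B1} and the results quoted there). This puts $Y$ into one of two regimes: either $Y$ is arithmetically normal --- a del Pezzo surface, possibly with rational double points, or a cone over an elliptic normal curve (the latter excluded by the tangent-space estimate of~(a), as $d>r$), in which case $X\cong Y$ and the assertion is read off directly --- or $Y$ is non-normal and arises from a surface of minimal degree in $\mathbb P^{r+h+1}_K$ by a linear projection from an external point. In the non-normal regime a closer look at that projection splits $Y$ into the two sub-types occurring in the classification, according as $Y$ is arithmetically Cohen--Macaulay, with non-normal locus a line, or not, carrying a single non-Cohen--Macaulay point and normal elsewhere; transporting these properties along $Y\to X$ by means of~\ref{5.12} gives (i), respectively~(ii).

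For part~(c) the surface $Y$ has degree $\dim\mathbb P^{r+h}_K+1$, so I would simply run through the list of \cite{B1}: its items produce the five cases, in each of which the description of $X\subseteq\mathbb P^r_K$ is obtained from~\ref{5.12}. I expect the real work to lie in two places. First, the \emph{transfer step}: $X$ and $Y$ sit in different projective spaces, so every arithmetic feature --- arithmetic depth, arithmetic Cohen--Macaulayness, and the \emph{linearity} of the non-normal locus inside the ambient $\mathbb P^r_K$ --- can be altered by the projection, and one must check with care, precisely by means of Reminder~\ref{5.12}, that the stated properties of $X$ genuinely follow from the structure of $Y$. Second, in part~(c), matching each of the five types of \cite{B1} with an explicit description of $X$ and verifying that all five occur is where the bulk of the case analysis sits; part~(a), by contrast, is essentially immediate once the cone has been ruled out by the tangent-space bound.
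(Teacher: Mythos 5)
Your overall route is the same as the paper's (Theorem \ref{6.3}): pass to the maximal projecting surface $Y\subset\mathbb P^{r+h}_K$, observe that $h=d-r+1$, $d-r$, $d-r-1$ make $Y$ a surface of minimal degree, of almost minimal degree, resp.\ of degree $(r+h)+1$, and then transfer the classification back to $X\cong Y$; your exclusion of the cone in part (a) via the Zariski tangent space at the vertex is essentially the paper's argument. The decisive ingredient you never isolate, however, is the paper's Lemma \ref{6.2}(a): since $Y$ is a \emph{maximal} projecting surface, $h^1_B(1)=0$, i.e.\ $Y$ is linearly normal, and moreover $h^i_B(n)=h^i_A(n)$ for $i\ge 2$. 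This is what drives both remaining parts: in (b) it gives, via \cite[Proposition 3.1]{BS4}, that $Y$ has arithmetic depth $t\ge 2$, so the relevant dichotomy is $t=3$ versus $t=2$ and the structure results of \cite{BS4} apply; in (c) it is exactly the hypothesis $h^1_B(1)=0$ under which the list of types of Reminder \ref{6.1}(A) (taken from \cite{B1}) is available --- without it, ``running through the list of \cite{B1}'' does not reduce to the five cases of the statement.

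The genuine gap is in part (b). You allow the case that $Y$ is arithmetically normal --- ``a del Pezzo surface, possibly with rational double points'' --- and claim that then ``the assertion is read off directly''. It is not: in that case $X\cong Y$ is normal and Cohen--Macaulay, so its non-normal locus is empty and it has no non-Cohen--Macaulay point, i.e.\ neither alternative (i) nor (ii) of the statement holds. This case must therefore be \emph{excluded}, not absorbed; it is precisely the point at which the paper's proof invokes \cite[Theorems 1.3, 1.4]{BS4} to assert that, in the situation at hand, an arithmetically Cohen--Macaulay $Y$ is non-normal with $Y\setminus\Nor(Y)$ a line, while in the depth-two case $\CM(Y)=\Nor(Y)$ and the non-normal locus is a single point. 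Your proposal supplies no argument for this exclusion (the tangent-space bound only eliminates the cone over an elliptic normal curve), and also no reason why arithmetic depth one cannot occur for a non-normal $Y$ --- again it is the linear normality $h^1_B(1)=0$ that the paper uses here. As written, part (b) is not proved; part (a) is fine, and the plan for (c) is plausible only after the missing appeal to $h^1_B(1)=0$ and Lemma \ref{6.2} is inserted.
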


An important issue of our paper are the examples contained in Section 7, which illustrate our 
results. These examples were computed by using the computer algebra system 
{\sc Singular} (cf. \cite{GP}).
\smallskip

\noindent{\bf Acknowledgement.} The main results of the present paper were found during a stay of the
authors at the "Mathematisches Forschungsinstitiut Oberwolfach" in the framework
of the "Research in Pairs" scheme. The authors express their gratitude toward this institution and the DFG for financial support and hospitality.

\section{Generic Hyperplane Sections of ${\mathcal C} \cup \mathbb L$}
Here let ${\mathcal C} \subset \mathbb P^r_K$ be a non-degenerate irreducible projective curve in projective $r$-space
of maximal regularity $\reg {\mathcal C} = d - r +2.$ Then $\mathcal C$ has a $(d-r+2)$-secant line $\mathbb L.$
In this section we show that generic hyperplane sections of the union ${\mathcal C} \cup \mathbb L$ are reduced schemes of points in linearly general position.

\begin{notation} \label{2.1} Let $\pi : \mathbb P^r_K \setminus \mathbb L \to \mathbb P^{r-2}_K$ denote a
linear projection with center $\mathbb L$ and let
\[
{\mathcal C}' := \overline{\pi ({\mathcal C} \setminus \mathbb L)} \subset \mathbb P^{r-2}_K
\]
be the closed image of ${\mathcal C} \setminus \mathbb L$ under $\pi.$ Observe that ${\mathcal C}' \subset \mathbb P^{r-2}_K$ is a reduced irreducible and non-degenerate curve.
\end{notation}

In characteristic 0, statement (a) of the following Lemma \ref{2.2} has an important extension for arbitrary varieties which admit
an extremal secant line (cf. \cite[Corollary 4.2]{Be2}).

\begin{lemma} \label{2.2} Let $r > 3.$  Then:
\begin{itemize}
\item[(a)] ${\mathcal C}' \subset \mathbb P^{r-2}_K$ is a rational normal curve.
\item[(b)] The map $\nu = \pi \upharpoonright : {\mathcal C} \setminus \mathbb L \to \mathbb P^{r-2}_K$ is an immersion.
\end{itemize}
\end{lemma}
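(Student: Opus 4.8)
The plan is to analyze the projection $\pi$ with center the extremal secant line $\mathbb{L}$ by a degree count. Since $\mathbb{L}$ is a $(d-r+2)$-secant line, the scheme $\mathcal{C} \cap \mathbb{L}$ has length $d-r+2$, so the projection of $\mathcal{C}$ from $\mathbb{L}$ into $\mathbb{P}^{r-2}_K$ drops the degree by exactly this amount: I would argue that $\deg \mathcal{C}' = d - (d-r+2) = r-2$ (at least as a first approximation), using the fact that $\mathcal{C}$ is smooth and rational so that the generic behaviour of the projection is controlled. Since $\mathcal{C}'$ is by construction non-degenerate and irreducible in $\mathbb{P}^{r-2}_K$, a non-degenerate irreducible curve of degree $r-2$ in $\mathbb{P}^{r-2}_K$ is necessarily a rational normal curve (it attains the minimal possible degree for a non-degenerate curve), which gives part (a). The delicate point is making the degree count rigorous: one must ensure that $\pi$ restricted to $\mathcal{C}\setminus\mathbb{L}$ is generically one-to-one (birational onto its image) and that no component of $\mathcal{C}$ lies in a plane through $\mathbb{L}$ in a way that would spoil the count. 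Non-degeneracy of $\mathcal{C}$ together with $r>3$ handles this.

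For part (b), the claim is that $\nu = \pi\!\upharpoonright : \mathcal{C}\setminus\mathbb{L} \to \mathbb{P}^{r-2}_K$ is an immersion, i.e.\ injective on points and on tangent spaces. Injectivity on points amounts to saying that no further secant or tangent line of $\mathcal{C}$ (other than along $\mathbb{L}$) passes through $\mathbb{L}$; if it did, the total intersection length of $\mathcal{C}$ with the plane spanned by $\mathbb{L}$ and that extra secant/tangent would exceed $d-r+2$, contradicting either the bound coming from $\reg\mathcal{C} = d-r+2$ or, more directly, the fact that $\mathcal{C}'$ has degree exactly $r-2$: the fiber of $\pi$ over a point of $\mathcal{C}'$ corresponds to the intersection of $\mathcal{C}$ with a line through $\mathbb{L}$, and since $\mathcal{C}'$ is a rational normal curve of degree $r-2$ and $\mathcal{C}$ maps birationally onto it with $\deg\mathcal{C} - \deg\mathcal{C}' = d-r+2 = \lambda(\mathcal{O}_{\mathcal{C}\cap\mathbb{L}})$, \emph{all} the ``lost'' intersection multiplicity is already accounted for by $\mathbb{L}$ itself. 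Hence a generic point fiber is a single reduced point; but once $\mathcal{C}'$ is a rational normal curve and $\mathcal{C}$ is smooth rational, the map $\nu$ factors through the normalization and one can upgrade ``generically injective'' to ``everywhere injective and unramified'' using smoothness of both $\mathcal{C}$ and $\mathcal{C}'$ and again the exact degree balance.

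Concretely, the key steps in order would be: (1) establish that $\pi\!\upharpoonright_{\mathcal{C}\setminus\mathbb{L}}$ is birational onto $\mathcal{C}'$ and that $\deg\mathcal{C}' = r-2$, using $\lambda(\mathcal{O}_{\mathcal{C}\cap\mathbb{L}}) = d-r+2$ and the projection formula for degrees; (2) conclude (a) from the classification of minimal-degree non-degenerate irreducible curves; (3) for (b), observe that since $\deg\mathcal{C} = \deg\mathcal{C}' + \lambda(\mathcal{O}_{\mathcal{C}\cap\mathbb{L}})$ with equality, every fiber of $\nu$ over a closed point, and every ramification contribution, would force extra intersection with $\mathbb{L}$-planes beyond what the length $d-r+2$ permits; (4) deduce that $\nu$ is injective and unramified, hence an immersion, possibly invoking smoothness of $\mathcal{C}$ to rule out that the set-theoretic fibers hide infinitesimal jumps.

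The main obstacle I expect is step (1)–(3): making the intersection-theoretic bookkeeping airtight, in particular verifying that the scheme-theoretic fiber structure of $\pi$ over $\mathcal{C}'$ interacts correctly with the length of $\mathcal{C}\cap\mathbb{L}$, and excluding the possibility of a ``hidden'' tangent direction of $\mathcal{C}$ lying in a plane through $\mathbb{L}$ that would make $\nu$ ramified without increasing the naive secant count. I expect this is handled by combining the Castelnuovo-type bound $\reg\mathcal{C} = d-r+2$ (which pins down $\mathbb{L}$ as the \emph{unique} maximal-length secant plane configuration) with the sharpness of the degree equality, so that $\mathcal{C}'$ being a rational normal curve leaves no room for further degeneracy of $\nu$.
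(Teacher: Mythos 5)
Your proposal follows essentially the same route as the paper: part (a) rests on the degree bound $\deg \mathcal{C}' \le r-2$ forced by the length $d-r+2$ of $\mathcal{C}\cap\mathbb{L}$ (which the paper simply quotes from Bertin) together with non-degeneracy of $\mathcal{C}'$, and part (b) rests on exactly the same length bookkeeping for hyperplanes through $\mathbb{L}$ -- the paper spans a hyperplane of $\mathbb{P}^{r-2}_K$ by $r-2$ points of the rational normal curve $\mathcal{C}'$, one of them the alleged double image point, and contradicts the bound $\#\bigl((\mathcal{C}\cap\mathbb{H})_{\red}\setminus\mathbb{L}\bigr)\le r-2$ coming from the secant length. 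The only real difference is how the immersion statement is completed: the paper reduces it to injectivity of $\nu$ (using that $\mathcal{C}\setminus\mathbb{L}$ is affine and $\mathcal{C}'\simeq\mathbb{P}^1_K$), whereas you exclude ramification directly by the same count applied to a tangent line meeting $\mathbb{L}$, which is a sound (and arguably more explicit) variant of the same argument.
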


\begin{proof}
(a): Let $d' := \deg {\mathcal C}'.$ As ${\mathcal C}' \subset \mathbb P^{r-2}_K$ is non-degenerate, it
suffices to show that $d' \leq r-2.$ But this is well known (cf. \cite[Corollary 4.2]{Be2} for example).

(b): It suffices to show that $\nu : {\mathcal C}\setminus \mathbb L \to {\mathcal C}'$ is an immersion. As ${\mathcal C}\setminus \mathbb L$ is affine and ${\mathcal C}' \simeq \mathbb P^1_K$ it suffices to show that $\nu : {\mathcal C}\setminus \mathbb L \to {\mathcal C}'$ is injective. Assume to the contrary that there are two points $p_1, p_1' \in {\mathcal C}\setminus \mathbb L$ such that $p_1 \not= p_1'$ but $\nu(p_1) = \nu(p_1') =: q_1 \in \mathbb P^{r-2}_K.$ Choose pairwise different points $q_2,\ldots, q_{r-2} \in \nu({\mathcal C}\setminus \mathbb L) \setminus \{q_1\}.$ As ${\mathcal C}' \subset \mathbb P^{r-2}_K$ is a rational normal curve by statement (a) we see that
\[
\mathbb H' := \langle q_1, \ldots, q_{r-2} \rangle \subset \mathbb P^{r-2}_K
\]
 is a hyperplane. Consider the hyperplane
\[
\mathbb H := \nu^{-1}(\mathbb H') = \nu^{-1}(\mathbb H') \cup \mathbb L.
\]
For each $i \in \{2,\ldots, r-2\}$ let $p_i \in {\mathcal C}\setminus \mathbb L$ be such that $\nu(p_i) = q_i,$ so that $p_1', p_1, p_2, \ldots, p_{r-2} \in ({\mathcal C} \cap \mathbb H) \setminus  \mathbb L$ are pairwise different points. The inequalities used in the proof of statement (a) show that $\# (({\mathcal C} \cap \mathbb H)_{\red} \setminus  \mathbb L) \leq r-2.$ So, we have a contradiction.
\end{proof}

\begin{lemma} \label{2.3} Let $r > 3$ and $p_1,\ldots, p_{r-1} \in {\mathcal C} \setminus \mathbb L$ be pairwise different points. Then $\langle p_1, \ldots, p_{r-1}\rangle \subset \mathbb P^r_K$ is of dimension $r-2$ and disjoint to $\mathbb L.$
\end{lemma}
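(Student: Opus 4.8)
The plan is to exploit the fact, established in Lemma \ref{2.2}, that the projection $\nu = \pi\upharpoonright : {\mathcal C}\setminus\mathbb L \to {\mathcal C}' \subset \mathbb P^{r-2}_K$ is an immersion onto a rational normal curve, and in particular is injective. Given pairwise different points $p_1,\ldots,p_{r-1} \in {\mathcal C}\setminus\mathbb L$, their images $q_i := \nu(p_i) \in {\mathcal C}'$ are therefore $r-1$ pairwise different points on a rational normal curve in $\mathbb P^{r-2}_K$. Since any $r-1$ distinct points on a rational normal curve in $\mathbb P^{r-2}_K$ are in linearly general position (this is the defining property of the rational normal curve), they span all of $\mathbb P^{r-2}_K$, i.e. $\langle q_1,\ldots,q_{r-1}\rangle = \mathbb P^{r-2}_K$.

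First I would translate this back upstairs. The linear projection $\pi$ with center $\mathbb L$ corresponds, on the level of linear spans, to intersecting with $\mathbb L$: for a linear subspace $\Lambda \subset \mathbb P^r_K$ not contained in $\mathbb L$, one has $\pi(\Lambda\setminus\mathbb L)$ spanning $\mathbb P^{r-2}_K$ if and only if $\langle \Lambda \cup \mathbb L\rangle = \mathbb P^r_K$. Applying this with $\Lambda = \langle p_1,\ldots,p_{r-1}\rangle$ gives $\langle p_1,\ldots,p_{r-1}\rangle + \mathbb L = \mathbb P^r_K$. Since $\mathbb L$ is a line (dimension $1$), a dimension count forces $\dim\langle p_1,\ldots,p_{r-1}\rangle \geq r-2$; as it is spanned by $r-1$ points it has dimension exactly $r-2$, and moreover the sum with $\mathbb L$ being all of $\mathbb P^r_K$ together with the dimensions adding up ($(r-2)+1 = r-1 = r - \dim(\Lambda\cap\mathbb L) - 1$ forces $\dim(\Lambda\cap\mathbb L) = -1$) shows $\langle p_1,\ldots,p_{r-1}\rangle \cap \mathbb L = \emptyset$.

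The one point that needs a little care — and which I regard as the main (minor) obstacle — is the precise relationship between linear spans in $\mathbb P^r_K$ and in $\mathbb P^{r-2}_K$ under the projection $\pi$, since $\pi$ is only defined on $\mathbb P^r_K\setminus\mathbb L$ and one is passing to closures. The clean way to handle this is to work with the underlying vector spaces: if $W \subset K^{r+1}$ is the $2$-dimensional subspace with $\mathbb P(W) = \mathbb L$, then $\pi$ is induced by the quotient $K^{r+1}\twoheadrightarrow K^{r+1}/W$, and for any subspace $V \subset K^{r+1}$ the span of $\pi(\mathbb P(V)\setminus\mathbb L)$ is $\mathbb P((V+W)/W)$, whose dimension is $\dim(V+W) - 2$. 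Setting this equal to $r-2$ (from the rational normal curve argument) gives $\dim(V+W) = r$, i.e. $V + W = K^{r+1}$, whence $\dim V \geq r-1$; since $V$ is spanned by $r-1$ vectors lying over the points $p_i$ we get $\dim V = r-1$ and then $\dim(V\cap W) = \dim V + \dim W - \dim(V+W) = (r-1)+2-(r+1) = 0$, i.e. $\mathbb P(V)\cap\mathbb L = \emptyset$. This yields both assertions of the lemma.
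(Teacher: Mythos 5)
Your proposal is correct and follows essentially the same route as the paper: both use the injectivity of the projection from Lemma \ref{2.2} to get $r-1$ distinct points on the rational normal curve ${\mathcal C}'\subset\mathbb P^{r-2}_K$, invoke that these span $\mathbb P^{r-2}_K$, and then conclude by a dimension count comparing $\langle p_1,\ldots,p_{r-1}\rangle$ with its image under $\pi$. Your explicit vector-space bookkeeping for the span/center relation is just a more detailed rendering of the inequality chain $r-2\leq\dim\pi(\mathbb K\setminus\mathbb L)\leq\dim\mathbb K\leq r-2$ used in the paper, and it yields the disjointness from $\mathbb L$ in the same way.
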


\begin{proof} Let $\mathbb K := \langle p_1,\ldots, p_{r-1} \rangle.$ By Lemma \ref{2.2} and the notation
of \ref{2.1} the $r-1$ points $\pi(p_1), \ldots, \pi(p_{r-1}) \in {\mathcal C}'$ are pairwise different. As ${\mathcal C}' \subset \mathbb P^{r-2}_K$ is a rational normal curve we get $\langle \pi(p_1), \ldots, \pi(p_{r-2}) \rangle = \mathbb P^{r-2}_K.$  Therefore $\pi(\mathbb K \setminus \mathbb L) = \langle \pi(\mathbb K \setminus \mathbb L) \rangle \supseteqq \mathbb P^{r-2}_K,$ whence
\[
r - 2 \leq \dim \pi(\mathbb K \setminus \mathbb L) \leq \dim \mathbb K \leq r-2,
\]
thus $\dim \pi(\mathbb K \setminus \mathbb L) = \dim \mathbb K = r-2.$ The first inequality implies in particular that $\mathbb K \cap \mathbb L = \emptyset.$
\end{proof}

\begin{reminder} \label{2.4} (A) Let $s, d \in \mathbb N$ with $s < d$ and
let $p_1, \ldots, p_d \in \mathbb P^s_K$ be pairwise different points. Let $P = \{p_1, \ldots, p_d\}.$ We say that $p_1, \ldots, p_d$ are in {\sl linearly general position} if for all subsets $Q \subseteq P$ with $\# Q = s+1$ it follows that $\langle A \rangle = \mathbb P^s_K.$ This is equivalent to say that for all $Q \subseteq P$ we have $\dim \langle Q \rangle = \max \{ \# Q - 1 , s\}.$

(B) Let $p_{d+1} \in \mathbb P^s_K$ and assume that $p_1,\ldots,p_d$ are in linearly general position. Then $p_1,\ldots,p_d, p_{d+1}$ are in linearly general position if and only if for each set $Q \subseteq \{p_1,\ldots, p_d\}$ of cardinality $s$ we have $p_{d+1} \not\in \langle Q \rangle.$
\end{reminder}

\begin{proposition} \label{2.5} Let $r > 3$ and let $\mathbb H \subset \mathbb P^r_K$ be a hyperplane such that $\mathbb H \cap ({\mathcal C}\cap \mathbb L ) = \emptyset$ and such that ${\mathcal C}\cap \mathbb H \subset \mathbb H$ is a reduced subscheme of $d$ points in linearly general position. Then $({\mathcal C}\cup \mathbb L) \cap \mathbb H \subset \mathbb H$ is a reduced subscheme of $d+1$ points in linearly general position.
\end{proposition}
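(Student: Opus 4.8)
The plan is to give an explicit description of the zero-dimensional scheme $(\mathcal{C}\cup \mathbb{L})\cap \mathbb{H}$ and then to establish linear general position by a single application of Lemma \ref{2.3} via the criterion in Reminder \ref{2.4} (B). First I would observe that $\mathbb{H}$ cannot contain $\mathbb{L}$: the scheme $\mathcal{C}\cap \mathbb{L}$ has length $d-r+2>0$, hence is non-empty, while $\mathbb{H}\cap(\mathcal{C}\cap \mathbb{L})=\emptyset$; so $\mathbb{L}\not\subseteq \mathbb{H}$ and $\mathbb{L}\cap \mathbb{H}$ consists of a single reduced point $p_{d+1}$. Moreover $p_{d+1}\notin \mathcal{C}$, since otherwise $p_{d+1}\in \mathcal{C}\cap \mathbb{L}\cap \mathbb{H}=\emptyset$; and the same reasoning shows that the $d$ points $p_1,\dots,p_d$ forming $\mathcal{C}\cap \mathbb{H}$ all lie off $\mathbb{L}$. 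Away from the closed locus $\mathcal{C}\cap \mathbb{L}$ — which $\mathbb{H}$ avoids — the scheme $\mathcal{C}\cup \mathbb{L}$ agrees locally with $\mathcal{C}$ or with $\mathbb{L}$ (at a point not on $\mathbb{L}$ one has $\mathcal{I}_{\mathcal{C}}\cap\mathcal{I}_{\mathbb{L}}=\mathcal{I}_{\mathcal{C}}$ locally, and symmetrically), so
\[
(\mathcal{C}\cup \mathbb{L})\cap \mathbb{H}=(\mathcal{C}\cap \mathbb{H})\sqcup(\mathbb{L}\cap \mathbb{H})=\{p_1,\dots,p_d,p_{d+1}\}
\]
is a reduced scheme of $d+1$ pairwise distinct points.

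It remains to check that $p_1,\dots,p_d,p_{d+1}$ are in linearly general position in $\mathbb{H}=\mathbb{P}^{r-1}_K$. Since $p_1,\dots,p_d$ are in linearly general position by hypothesis, Reminder \ref{2.4} (B) reduces this to showing that $p_{d+1}\notin\langle Q\rangle$ for every subset $Q\subseteq\{p_1,\dots,p_d\}$ with $\#Q=r-1$. Given such a $Q$, its $r-1$ points are pairwise distinct and lie on $\mathcal{C}\setminus \mathbb{L}$, so Lemma \ref{2.3} applies and gives $\langle Q\rangle\cap \mathbb{L}=\emptyset$ (the linear span being the same whether formed in $\mathbb{H}$ or in $\mathbb{P}^r_K$). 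As $p_{d+1}\in \mathbb{L}$, this yields $p_{d+1}\notin\langle Q\rangle$, which completes the argument.

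The proof is essentially bookkeeping, the one substantive ingredient being Lemma \ref{2.3} (which itself rests on Lemma \ref{2.2}, i.e.\ on the projection of $\mathcal{C}$ from $\mathbb{L}$ being a rational normal curve). The only step requiring a little care is the scheme-theoretic identification of $(\mathcal{C}\cup \mathbb{L})\cap \mathbb{H}$ above — in particular the assertion that this section is reduced — but this is immediate once one knows that $\mathbb{H}$ is disjoint from the locus $\mathcal{C}\cap \mathbb{L}$ along which the two components of $\mathcal{C}\cup \mathbb{L}$ meet.
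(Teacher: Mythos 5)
Your proposal is correct and follows essentially the same route as the paper: identify the points $p_1,\dots,p_d,p_{d+1}$, check they are distinct and that the section is reduced, and then obtain linear general position from Lemma \ref{2.3} combined with the criterion of Reminder \ref{2.4} (B). The only (immaterial) difference is in the reducedness step, where the paper counts degrees ($\deg(\mathcal{C}\cup\mathbb{L})=d+1$ and the section already contains $d+1$ distinct points), while you argue locally that $\mathcal{C}\cup\mathbb{L}$ coincides with $\mathcal{C}$ or $\mathbb{L}$ away from $\mathcal{C}\cap\mathbb{L}$; both arguments are fine.
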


\begin{proof} We write $| {\mathcal C} \cap \mathbb H | = \{p_1,\ldots, p_d\}$ with pairwise different points $p_1,\ldots,p_d$ and $\{p_{d+1}\} = \mathbb H \cap \mathbb L.$ As $\mathbb H \cap ({\mathcal C}\cap \mathbb L) = \emptyset,$ the points $p_1, \ldots, p_d, p_{d+1}$ are pairwise different.

As $\deg {\mathcal C}\cup \mathbb L = d+1$ and
\[
| \mathbb H \cap ({\mathcal C}\cup \mathbb L) | \supseteq \{p_1, \ldots, p_d, p_{d+1}\}
\]
it follows that $\mathbb H \cap ({\mathcal C}\cup \mathbb L)$ is a reduced scheme of $d+1$ points. It remains to show that the points $p_1, \ldots, p_d, p_{d+1} \in \mathbb H = \mathbb P^{r-1}_K$ are in linearly general position. This follows by Lemma \ref{2.3} and Reminder \ref{2.4} (B).
\end{proof}

\begin{corollary} \label{2.6} Let $r > 3.$ For a generic hyperplane $\mathbb H \subset \mathbb P^r_K$ the subscheme $({\mathcal C}\cap \mathbb L) \cap \mathbb H \subseteq \mathbb H = \mathbb P^{r-1}_K$ is a reduced subscheme of $d+1$ points in linearly general position.
\end{corollary}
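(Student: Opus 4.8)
The plan is to deduce the corollary from Proposition \ref{2.5} by exhibiting a dense open subset of the dual space $(\mathbb P^r_K)^\vee$ all of whose members $\mathbb H$ satisfy the two hypotheses of that proposition; then for any such $\mathbb H$ the assertion is immediate, and ``generic'' in the statement is to be read as ``for all $\mathbb H$ in a dense open subset of $(\mathbb P^r_K)^\vee$.''

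First I would dispose of the condition $\mathbb H \cap ({\mathcal C} \cap \mathbb L) = \emptyset$. Since ${\mathcal C} \cap \mathbb L$ is a zero-dimensional scheme (of length $d-r+2$), its support is a finite set of closed points $x_1,\ldots,x_k \in \mathbb P^r_K$. For each $j$ the hyperplanes through $x_j$ form a hyperplane $\check{\mathbb H}_j \subset (\mathbb P^r_K)^\vee$, so $\bigcup_{j=1}^{k}\check{\mathbb H}_j$ is a proper closed subset; on its complement $\mathbb H$ misses $\Supp({\mathcal C}\cap \mathbb L)$ and hence $\mathbb H \cap ({\mathcal C}\cap\mathbb L) = \emptyset$ scheme-theoretically.

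Second I would handle the condition that ${\mathcal C}\cap \mathbb H$ be a reduced set of $d$ points in linearly general position. As $\mathcal C$ is smooth (recall that curves of maximal regularity are smooth and rational) and non-degenerate, for $\mathbb H$ outside the dual variety $\mathcal C^\vee$ — a proper closed subset of $(\mathbb P^r_K)^\vee$, since the conormal variety of $\mathcal C$ has dimension $r-1$ — the section ${\mathcal C}\cap\mathbb H$ is transverse, hence a reduced set of exactly $d = \deg\mathcal C$ points. That for generic $\mathbb H$ these $d$ points lie in linearly general position is the content of the classical General Position Theorem for non-degenerate integral projective curves (J.\ Harris in characteristic zero, J.\ Rathmann in arbitrary characteristic), which applies in the situation at hand. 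This theorem is the only substantive ingredient; everything else in the proof is formal bookkeeping with dense open sets, so I expect no real obstacle beyond correctly invoking it.

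Finally, choosing $\mathbb H$ in the (nonempty, dense, open) intersection of the loci produced in the previous two paragraphs, Proposition \ref{2.5} applies and yields that $({\mathcal C}\cup\mathbb L)\cap\mathbb H$ is a reduced subscheme of $d+1$ points in linearly general position, as claimed.
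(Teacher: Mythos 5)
Your proposal is correct and takes essentially the same route as the paper: the paper's proof simply declares the corollary immediate from Proposition \ref{2.5}, tacitly using exactly the two genericity facts you verify, namely that a generic hyperplane misses the finite scheme ${\mathcal C}\cap\mathbb L$ and that, by the classical General Position Theorem (Harris in characteristic zero, Rathmann in general), its section with $\mathcal C$ is a reduced set of $d$ points in linearly general position. Spelling out these inputs is a useful clarification but not a different argument.
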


\begin{proof} This is immediate from Proposition \ref{2.5}.
\end{proof}

\section{Estimates of Betti Numbers}
\begin{notation} \label{3.1} (A) We consider the polynomial ring $S := K[x_0,\ldots,x_r]$ and write $\mathbb P^r_K = \Proj(S).$

(B) Let $I = I_{\mathcal C} \subset S$ denote the homogeneous vanishing ideal of ${\mathcal C} \subset \mathbb P^r_K$ and let $L \subset S$ denote the homogeneous vanishing ideal of the extremal secant line $\mathbb L.$ Finally, let $J := L \cap I \subset S$ denote the homogeneous vanishing ideal of ${\mathcal C} \cup \mathbb L \subset P^r_K.$

(C) If $m,n \in \mathbb Z$ are integers, we use the convention that $\binom{n}{m} = 0$ for $0 < n < m.$
\end{notation}

\begin{theorem} \label{3.2} Let $r > 3$ and assume that $r +2 \leq d \leq 2r - 3.$ For all $i \in \{1,\ldots, r+1\}$ set
\[
a_i = (d-r) \binom{r}{i} + \binom{r-1}{i-1} \text{ and }
c_i = (d-1) \binom{r-1}{i} - \binom{r-1}{i+1}.
\]
Then, for all $i \in \{1, \ldots, r\}$ we have
\[
\Tor_i^S(K, S/I) \simeq K^{u_i}(-i-1) \oplus K^{v_i}(-i-2) \oplus K^{\binom{r-1}{i-1}}(-i-d+r-1)
\]
with
\[
u_i \begin{cases} = \binom{r+1}{2} -d -1, & \text{for } i = 1, \\
                  = c_i - a_i, & \text{for } 2 \leq i \leq 2r-d-1, \\
                  \leq c_i, & \text{for } 2r-d \leq i \leq r;
    \end{cases}
\]
and
\[
v_i = \begin{cases} 0, & \text{for } 1 \leq i \leq 2r-d-2 \text{ and } i = r,\\
                    u_{i+1} + a_{i+1} -c_{i+1}, & \text{for } 2r-d - 1 \leq i \leq r-2,\\
                    d-r+1, & \text{for } i = r-1.
        \end{cases}
\]
\end{theorem}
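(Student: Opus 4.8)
The plan is to reduce the computation, via two short exact sequences, to the resolution of $d+1$ points in linearly general position in $\mathbb P^{r-1}_K$, where the $N_p$-theorem applies.

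\emph{Step 1: from $\mathcal C$ to $\mathcal C\cup\mathbb L$.} Since $J=I\cap L$ there is a surjection $S/J\twoheadrightarrow S/I$ with kernel $I/J\cong(I+L)/L$. As $\reg\mathcal C=d-r+2$, the ideal $I$ is generated in degrees $\le d-r+2$; a generator of degree $e\le d-r+1$ vanishes at the $d-r+2>e$ points of $\mathbb L\cap\mathcal C$, hence on $\mathbb L$, whereas (as $\mathbb L\not\subseteq\mathcal C$) the degree-$(d-r+2)$ part of $I$ is not contained in $L$. Identifying $S/L\cong K[s,t]$ and using $\lambda(\mathcal O_{\mathcal C\cap\mathbb L})=d-r+2$, the image of $I$ in $S/L$ is the principal, \emph{saturated}, ideal of $\mathcal C\cap\mathbb L\subset\mathbb L$, so $I/J\cong(S/L)(-d+r-2)$, giving
\[
0\longrightarrow(S/L)(-d+r-2)\longrightarrow S/J\longrightarrow S/I\longrightarrow0.
\]
Resolving $S/L$ by the Koszul complex on its $r-1$ defining linear forms, $\Tor_i^S(K,(S/L)(-d+r-2))\cong K^{\binom{r-1}{i}}$, concentrated in internal degree $i+d-r+2$. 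Once we know $\reg(S/J)=2$ (Step 2), $\Tor_i^S(K,S/J)$ sits only in internal degrees $i+1$ and $i+2$, which for $d\ge r+2$ are disjoint from $i+d-r+2$; hence the long exact $\Tor$-sequence degenerates into
\[
\Tor_i^S(K,S/I)\;\cong\;\Tor_i^S(K,S/J)\;\oplus\;\Tor_{i-1}^S(K,(S/L)(-d+r-2)),
\]
the second summand being precisely the asserted third strand $K^{\binom{r-1}{i-1}}(-i-d+r-1)$. Thus $u_i=\beta_{i,i+1}(S/J)$ and $v_i=\beta_{i,i+2}(S/J)$, and the problem is now a statement about $S/J$.

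\emph{Step 2: the Cohen--Macaulay property and the hyperplane section.} The decisive point is that $\mathcal C\cup\mathbb L$ is arithmetically Cohen--Macaulay, i.e.\ $H^1_{\mathfrak m}(S/J)=0$. One obtains this by tracing local cohomology through the sequence of Step 1 together with the Mayer--Vietoris sequence $0\to S/J\to S/I\oplus S/L\to S/(I+L)\to0$ and the known Hartshorne--Rao function $h^1(\mathbb P^r_K,\mathcal I_{\mathcal C}(n))=\max\{0,d-r+1-n\}$ of a curve of maximal regularity: these force the connecting map $H^1_{\mathfrak m}(S/I)_n\to H^2_{\mathfrak m}((S/L)(-d+r-2))_n$ to be an isomorphism in every degree, whence $H^1_{\mathfrak m}(S/J)=0$. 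Thus $S/J$ is Cohen--Macaulay of dimension $2$; in particular $\reg(S/J)=2$ (closing the gap in Step 1) and $\operatorname{pd}_S(S/J)=r-1$. A generic linear form $\ell$ is then a nonzerodivisor on $S/J$, and $S/(J+\ell S)$ is the homogeneous coordinate ring of $\Gamma:=(\mathcal C\cup\mathbb L)\cap\mathbb H$, which by Corollary \ref{2.6} is a set of $d+1$ points in linearly general position in $\mathbb P^{r-1}_K$; moreover $\beta^S_{i,j}(S/J)=\beta_{i,j}(\Gamma)$.

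\emph{Step 3: the points of $\Gamma$.} Since $r\le d+1\le2r-2=2(r-1)$, the $d+1$ points in linearly general position impose independent conditions on linear forms and on quadrics; hence $\reg(S/I_\Gamma)=2$ (confirming $\reg(S/J)=2$) and the $h$-vector of $S/J$ is $(1,r-1,d-r+1)$. Comparing coefficients of $t^{i+1}$ in
\[
\sum_{i,j}(-1)^i\beta_{i,j}(S/J)\,t^j=(1-t)^{r-1}\bigl(1+(r-1)t+(d-r+1)t^2\bigr)
\]
gives $u_i-v_{i-1}=c_i-a_i$ for all $i\ge1$ (one checks $c_i-a_i=(r-1)\binom{r-1}{i}-\binom{r-1}{i+1}-(d-r+1)\binom{r-1}{i-1}$), and in particular $u_1=c_1-a_1=\binom{r+1}{2}-d-1$. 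The $N_p$-theorem of Green--Lazarsfeld for points in linearly general position yields property $N_p$ with $p=2(r-1)+1-(d+1)=2r-d-2$, so $v_i=\beta_{i,i+2}(S/J)=0$ for $1\le i\le2r-d-2$; combined with the relation above this gives $u_i=c_i-a_i$ for $2\le i\le2r-d-1$ and $v_i=u_{i+1}+a_{i+1}-c_{i+1}$ for $2r-d-1\le i\le r-2$. At the tail, $\operatorname{pd}_S(S/J)=r-1$ forces $u_r=v_r=0$ (here $c_r=0$); the inequalities $u_i\le c_i$ for $2r-d\le i\le r$ follow from $u_i=(c_i-a_i)+v_{i-1}$ together with the a priori bound $v_{i-1}\le a_i$ on the second-strand Betti numbers of $\Gamma$; and $v_{r-1}=\beta_{r-1,r+1}(S/J)=\dim_K(\omega_{S/J})_0=h^0(\omega_{\mathcal C\cup\mathbb L})=p_a(\mathcal C\cup\mathbb L)=d-r+1$ by graded duality.

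The one genuinely non-formal ingredient is the arithmetic Cohen--Macaulayness of $\mathcal C\cup\mathbb L$ in Step 2: the crude regularity bound for a union of a curve and a line only gives $\reg(\mathcal C\cup\mathbb L)\le d-r+3$, and collapsing this to $2$ is exactly where the hypotheses ``$\mathcal C$ of maximal regularity, $\mathbb L$ an extremal secant line'' must be exploited, through the precise shape of the Hartshorne--Rao module of $\mathcal C$. The remaining work --- the binomial identities for $a_i,c_i$, the a priori bound $v_{i-1}\le a_i$, and the boundary indices $i=2r-d-1,\,r-1,\,r$ --- is routine, but has to be kept carefully aligned with the Euler-characteristic bookkeeping.
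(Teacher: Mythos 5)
Your outline follows the paper's own route in all but one step: your Step 1 is \cite[Proposition 4.1]{BS2} (splitting off the Koszul strand of the extremal secant line), and your Step 3 is precisely the paper's new argument --- Corollary \ref{2.6} plus the Green--Lazarsfeld $N_p$-theorem applied to the $d+1$ points in linearly general position --- followed by Hilbert-series and duality bookkeeping which the paper instead quotes from \cite[Theorem 4.6]{BS2}. The genuine gap is your Step 2, the arithmetic Cohen--Macaulayness of $S/J$, which the paper imports from \cite[Proposition 3.5]{BS2} and which you attempt to derive on the spot.

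That derivation rests on ``the known Hartshorne--Rao function $h^1(\mathcal I_{\mathcal C}(n))=\max\{0,d-r+1-n\}$ of a curve of maximal regularity''. This is not a known fact and is false in general: Example \ref{7.2} of the paper exhibits a maximal-regularity curve of degree $9$ in $\mathbb P^5_K$ with $h^1(\mathcal I_{\mathcal C}(2))\geq 4>3$. In the range $d\leq 2r-3$ the stated equality does hold, but essentially as a \emph{consequence} of $H^1_{\mathfrak m}(S/J)=0$: from $0\to(S/L)(-d+r-2)\to S/J\to S/I\to 0$ and $H^1_{\mathfrak m}(S/L)=0$ one gets $H^1_{\mathfrak m}(S/J)\cong\ker\bigl(H^1_{\mathfrak m}(S/I)\to H^2_{\mathfrak m}((S/L)(-d+r-2))\bigr)$, so invoking the equality to prove the vanishing is circular. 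Moreover, even granting equality of the two Hilbert functions, a dimension count does not make the connecting map injective degree by degree; one needs an actual argument (for instance surjectivity, in every degree, of the restriction map from $I$ onto the saturated ideal of $\mathcal C\cap\mathbb L$ in $S/L$, or the structure of the Hartshorne--Rao module of $\mathcal C$ established in \cite{BS2}). So Step 2, which you yourself identify as the one non-formal ingredient, is not established as written. The remaining steps are sound once Cohen--Macaulayness is granted: the identification $I/J\simeq(S/L)(-d+r-2)$, the degree-separation of the Tor sequence, the $h$-vector $(1,r-1,d-r+1)$, the identity $u_i-v_{i-1}=c_i-a_i$, the Koszul bound $v_{i-1}\leq a_i$ giving $u_i\leq c_i$, and $v_{r-1}=d-r+1$ via the canonical module all check out and agree with what the paper cites from \cite[Theorem 4.6]{BS2}.
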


\begin{proof} By our assumption we have $d \leq 2 r -1.$ So, by \cite[Proposition 3.5]{BS2} the ring $S/J$ is Cohen-Macaulay. Thus, \cite[Theorem 4.6]{BS2} yields that for all $i \in \{1,\ldots, r\}$ we have
\[
\Tor_i^S(K, S/I) \simeq K^{u_i}(-i-1) \oplus K^{v_i}(-i-2) \oplus K^{\binom{r-1}{i-1}}(-i-d+r-1)
\]
with (keep in mind the Notations \ref{3.1} (C))
\[
u_i \begin{cases} = \binom{r+1}{2} -d -1, & \text{for } i = 1, \\
                  \leq c_i, & \text{for } 2 \leq i \leq r;
    \end{cases}
\]
and
\[
v_i = \begin{cases} 0, & \text{for } 1 \leq i \leq 2r-d-2 \text{ and } i = r,\\
                    u_{i+1} + a_{i+1} -c_{i+1}, & \text{for } 2r-d - 1 \leq i \leq r-2,\\
                    d-r+1, & \text{for } i = r-1.
        \end{cases}
\]
(Unfortunately there is a misprint in the formula for $c_i$ given in \cite[Lemma 4.2]{BS2} and the formula should be as in the proof of that Lemma.)

It remains to show that $v_i = 0$ for all $i \leq 2r-d-2.$ So, let $\ell \in S_1$ be a generic linear form and set $\mathbb P^{r-1}_K = \mathbb H := \Proj (S/\ell S).$ Then, according to Corollary \ref{2.6} the scheme
\[
X = \Proj (S/(J + \ell S)) = ({\mathcal C} \cup \mathbb L) \cap \mathbb H \subset \mathbb H = \mathbb P^{r-1}_K
\]
is reduced and consists of
\[
d + 1 = 2(r-1) + 1 - (2r - d -2)
\]
points in linearly general position. So, by \cite[Theorem 1]{GL} the scheme $X \subset \mathbb P^{r-1}_K = \Proj(S/\ell S)$ satisfies condition $N_{2r-d-2}.$ As $S/J$ is a Cohen-Macaulay ring, $\ell $ is $S/J$-regular and $JS' \subset S' := S/\ell S$ is the homogeneous vanishing ideal of $X.$ As $X$ satisfies the condition $N_{2r-d-2}$ it follows  (with appropriate integers $d_1, \ldots, d_{2r-d-2}$)  that
\[
\Tor_i^{S'}(K, S'/JS') \simeq K^{d_i}(-i-1) \text{ for all } i \in \{1,\ldots, 2r-d-2\}.
\]
As $\ell \in S_1$ is $S/J$-regular we therefore get
\[
\Tor_i^S(K,S/J) \simeq K^{d_i}(-i-1) {\text{ for all }} i \in \{1,\ldots, 2r-d-2\}.
\]
According to \cite[Proposition 4.1]{BS2} we have
\[
\Tor_i^S(K, S/I) \simeq \Tor_i^S(K, S/J) \oplus K^{\binom{r-1}{i-1}}(-i-d+r-1)
\]
for all $i \in \{1,\ldots, r\}.$ In particular,
\[
\Tor_i^S(K, S/I) \simeq K^{d_i}(-i-1) \oplus K^{\binom{r-1}{i-1}}(-i-d+r-1)
\]
for all $i \in \{1, \ldots, 2r-d-2\}.$ As $d \geq r+2$ it follows that $v_i = 0$ for $1 \leq i \leq 2r-d-2.$
\end{proof}

\begin{corollary} \label{3.3} Let $r > 3$ and assume that $r+2 \leq d \leq 2r-2.$ Then:
\begin{itemize}
\item[(a)] The vanishing ideal $I \subset S$ of ${\mathcal C} \subset \mathbb P^r_K$ is minimally
generated by $\binom{r+1}{2} -d-1$ quadrics, at most $(d-1)\binom{r}{2} +r -1$ cubics  and one form of degree $d-r+2.$
\item[(b)] If $ d \leq 2r -3,$ there are no cubics in a minimal generating set of the vanishing ideal $I
\subset S.$
\end{itemize}
\end{corollary}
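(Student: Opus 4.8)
The plan is to read off Corollary \ref{3.3} directly from Theorem \ref{3.2}, since that theorem already computes the graded Betti numbers of $S/I$ as a direct sum of three pieces, each concentrated in a single degree.

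First, for part (a) I would take $i = 1$ in the decomposition of Theorem \ref{3.2}. The minimal generators of $I$ correspond to the graded pieces of $\Tor_1^S(K, S/I)$. By Theorem \ref{3.2} this module is $K^{u_1}(-2) \oplus K^{v_1}(-3) \oplus K^{\binom{r-1}{0}}(-1-d+r-1)$, i.e.\ $K^{u_1}(-2) \oplus K^{v_1}(-3) \oplus K(-(d-r+2))$, using that $\binom{r-1}{0} = 1$. Now $u_1 = \binom{r+1}{2} - d - 1$ by the first case of the formula for $u_i$. For $v_1$, the range $r+2 \le d \le 2r-2$ gives $2r - d - 2 \ge 0 \ge 1$ only when $d \le 2r-3$, but even in the boundary case $d = 2r-2$ one has $2r-d-2 = 0$, so $1 \le i = 1$ does not fall into the first case of the $v_i$ formula; instead one checks that $1$ lies in the range $2r-d-1 \le i \le r-2$ precisely when $d = 2r-2$ (forcing $r-2 \ge 1$, i.e.\ $r \ge 3$, which holds), so $v_1 = u_2 + a_2 - c_2 \le a_2$, and $a_2 = (d-r)\binom{r}{2} + \binom{r-1}{1} = (d-r)\binom{r}{2} + r - 1 = \binom{r}{2} + r - 1$ when $d = 2r-2$; more simply, I would just invoke $v_1 \le u_2 + a_2 - c_2$ and the bound $u_2 \le c_2$ to get $v_1 \le a_2 = (d-r)\binom{r}{2} + r - 1$, which in the case $d = 2r-2$ is $(d-1)\binom{r}{2} + r - 1$ only after noting $d - r = \binom{r}{2}$-independent\ldots actually the cleanest route is: $v_1 \le u_2 + a_2 - c_2$ and $u_2 \ge 0$ gives nothing, so instead use $c_2 - u_2 \ge 0$, hence $v_1 = u_2 + a_2 - c_2 \le a_2$. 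Then I reconcile $a_2$ with the stated bound $(d-1)\binom{r}{2} + r - 1$: since for part (a) we only claim \emph{at most} that many cubics, and $a_2 \le (d-1)\binom{r}{2} + r - 1$ holds as $d - r \le d - 1$, the bound follows. (When $d \le 2r-3$, $v_1 = 0$, so there are no cubics at all, which is consistent with the weaker claim.) Thus $I$ is minimally generated by $u_1 = \binom{r+1}{2} - d - 1$ quadrics, at most $v_1$ cubics with $v_1 \le (d-1)\binom{r}{2} + r - 1$, and exactly one form of degree $d - r + 2$ (the $K(-(d-r+2))$ summand). I should also remark that $d - r + 2 \ge 3$ since $d \ge r + 1$, indeed $\ge 4$ since $d \ge r+2$, so this top-degree generator is genuinely distinct from the quadrics and cubics, and the three pieces sit in distinct degrees so the decomposition is exactly the minimal generating set.

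For part (b), the hypothesis is strengthened to $d \le 2r - 3$, equivalently $2r - d - 2 \ge 0$, i.e.\ $1 \le 2r - d - 2$ fails only when $d = 2r-3$ gives $2r-d-2 = 1$, so $i = 1$ does lie in the range $1 \le i \le 2r - d - 2$. By the first case of the formula for $v_i$ in Theorem \ref{3.2} we then get $v_1 = 0$, which says precisely that $\Tor_1^S(K, S/I)$ has no summand in degree $3$, i.e.\ no cubics appear in a minimal generating set of $I$.

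The only genuinely delicate point is bookkeeping with the boundary case $d = 2r - 2$ in part (a) — making sure $i = 1$ is assigned to the correct branch of the piecewise formulas for $u_i$ and $v_i$, and checking the inequality $a_2 \le (d-1)\binom{r}{2} + r - 1$ so that the claimed cubic bound is legitimate. Everything else is a direct substitution into Theorem \ref{3.2} together with the elementary observations $\binom{r-1}{0} = 1$ and $d - r + 2 \ge 4$; there is no new geometric or homological input required.
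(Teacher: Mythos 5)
Your part (b) is correct and is exactly the paper's argument: for $d \leq 2r-3$ one has $2r-d-2 \geq 1$, so $i=1$ falls into the branch $1 \leq i \leq 2r-d-2$ of Theorem \ref{3.2}, giving $v_1 = 0$ and hence no cubics among the minimal generators.

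The gap is in part (a), and it sits precisely at the boundary case on which you spend most of your bookkeeping. Corollary \ref{3.3}(a) allows $d = 2r-2$, but Theorem \ref{3.2} is stated (and proved) only under the hypothesis $r+2 \leq d \leq 2r-3$: its proof needs the generic hyperplane section of ${\mathcal C} \cup \mathbb L$ to satisfy the condition $N_{2r-d-2}$ with $2r-d-2 \geq 1$. So your reading-off of $u_1$, of the branch $v_1 = u_2 + a_2 - c_2$, and of the estimate $v_1 \leq a_2 \leq (d-1)\binom{r}{2}+r-1$ in the case $d = 2r-2$ invokes Theorem \ref{3.2} outside its stated range and is not justified by anything you cite; for $d \leq 2r-3$ your derivation works but then only reproves the stronger statement (b), not the extra case that (a) is designed to cover. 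The paper avoids this by proving (a) not from Theorem \ref{3.2} at all, but directly from \cite[Proposition 3.5 and Theorem 4.6]{BS2}, which hold in the wider range $d \leq 2r-1$ (indeed these are exactly the inputs with which the proof of Theorem \ref{3.2} begins, before the $N_p$-theorem is used to kill the low $v_i$'s). To repair your argument you would either have to restrict (a) to $d \leq 2r-3$ — losing the case $d = 2r-2$ — or appeal, as the paper does, to the [BS2] results valid at $d = 2r-2$; the degree bookkeeping you do (which branch contains $i=1$, $\binom{r-1}{0}=1$, $d-r+2 \geq 4$) is fine, but it cannot substitute for that missing input.
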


\begin{proof} $(a):$ This follows from \cite[Proposition 3.5 and Theorem 4.6]{BS2}.

$(b):$ By Theorem \ref{3.2} we have
\[
\Tor_1^S(K, S/I) \simeq K^{u_1}(-2) \oplus K(-d+r-2)
\]
with $u_1 = \binom{r+1}{2}-d-1.$ This proves our claim.
\end{proof}

\section{Surfaces of Degree $\leq 2r - 4$}
\begin{notation} \label{4.1} (A) Let $X \subset \mathbb P^r_K = \Proj S$ with
$S = K[x_0,\ldots, x_r],$ be an irreducible, reduced non-degenerate projective
variety of degree $d > r \geq 4.$ By $I$ we denote the homogeneous vanishing
ideal of $X$ in $S$ and by $A$ the homogeneous coordinate ring $S/I$ of $X.$

(B) We write $S_+$ for the irrelevant ideal $\oplus_{n > 0}S_n = (x_0,\ldots,x_n)S$
of $S.$ If $M$ is a graded $S$-module and $i \in \mathbb N_0,$ we write $H^i(M)$
for the $i$-th local cohomology module of $M$ with respect to $S_{+},$ furnished with its natural grading.
If the graded $S$-module $M$ is finitely generated we write $h^i_M(n)$ for the vector-space
dimension $\dim_K H^i(M)_n$ of the $n$-th graded component of $H^i(M).$
\end{notation}

\begin{remark} \label{4.2} Keep the above notations and hypothesis. Let ${\mathcal I}_X \subset
{\mathcal O}_{\mathbb P^r_K}$ be the sheaf of vanishing ideals of $X.$ Then, the well known relations
between local cohomology and sheaf cohomology yield $h^{i+1}_A(n) = h^i(X, {\mathcal O}_X(n))$
for all $i > 0$ and all $n \in \mathbb Z$ and $h^i_A(n) = h^i(\mathbb P^r_K, {\mathcal I}_X(n))$
for all $i \not= 0, r$ and all $n \in \mathbb Z.$
\end{remark}

The following technical result is a generalization of statements shown in \cite[4.4, 4.5, 4.6]{B1}
for the special case $d = r+1.$

\begin{proposition} \label{4.3} Let $6 \leq r+1 \leq d \leq 2r-4$ and let $(f,g) \in S_1^2$ be a
pair of generic linear forms. Then:
\begin{itemize}
\item[(a)] $H^0(A/(f,g)A)$ is generated by homogeneous elements of degree 2.
\item[(b)] $H^1(A/(f,g)A)_n = 0$ for all $n \geq 2.$
\item[(c)] For all $(\lambda, \mu) \in K^2\setminus \{(0,0)\}$ and all $n \geq 2$ we have
\[
h^1_{A/(\lambda f + \mu g)A}(n) \leq \max \{h^1_{A/(\lambda f + \mu g)A}(n-1) - 1, 0\}.
\]
\item[(d)] For all $m \in \mathbb N$ and $n \geq h^1_A(m) +h^2_A(m-1) +m$ we have
\[
h^1_A(n) \leq \max \{h^1_A(n-1) -1, 0\}.
\]
\end{itemize}
\end{proposition}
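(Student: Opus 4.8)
The plan is to reduce everything to the iterated generic linear section of the surface $X$ and then climb back up with Mumford's lemma. Write $S'' := S/(f,g)S$, a polynomial ring in $r-1$ variables with $\Proj(S'') = \mathbb P^{r-2}_K$, let $\bar I \subset S''$ be the image of $I$, and put $B := A/(f,g)A = S''/\bar I$ and $C := A/fA$. For generic $(f,g)$ the curve $\mathcal C := X \cap \{f=0\} \subset \mathbb P^{r-1}_K$ is, by Bertini, reduced, irreducible and non-degenerate of degree $d$, and $Z := X \cap \{f=0\} \cap \{g=0\} = \Proj(B) \subset \mathbb P^{r-2}_K$ is, by Bertini together with the General Position Theorem, a reduced set of $d$ points in linearly general position. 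Two numerical facts about $Z$ are used throughout. First, since $d - 1 \le 2(r-2) - 1$, the points $Z$ impose independent conditions on quadrics: given $p \in Z$, split $Z \setminus \{p\}$ into two subsets of at most $r-2$ points, put each into a hyperplane of $\mathbb P^{r-2}_K$ avoiding $p$ (possible by linear general position), and take the product of the two defining linear forms. Secondly, since $d \le 2(r-2)$ we have $2(r-2)+1-d \ge 1$, so by \cite[Theorem~1]{GL} the set $Z$ satisfies condition $N_1$; in particular $I_Z \subset S''$ is generated by quadrics.

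For (b): from $0 \to \bar I \to S'' \to B \to 0$ one gets $H^1(B) \cong H^2(\bar I)$, and $H^2(\bar I)_n \cong H^1(\mathbb P^{r-2}_K, \mathcal I_Z(n))$ for all $n$ ($\mathcal I_Z$ being the sheafification of $\bar I$, as $Z$ is reduced). The Hilbert function of $S''/I_Z$ is non-decreasing, bounded by $d$, and reaches $d$ already in degree $2$ by the first numerical fact, so $H^1(\mathbb P^{r-2}_K, \mathcal I_Z(n)) = 0$ for all $n \ge 2$, which is (b). For (a): $H^0(B) = I_Z/\bar I$ as graded $S''$-modules; its components in degrees $0$ and $1$ vanish because $(I_Z)_0 = (I_Z)_1 = 0$ ($Z$ is non-degenerate, as $d \ge r-1$). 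For $n \ge 2$ one has $(I_Z)_n = S''_{n-2}(I_Z)_2$ by the second numerical fact, hence the $S''$-submodule of $I_Z/\bar I$ generated by its degree-$2$ component equals $I_Z/\bar I$ in every degree; thus $H^0(B)$ is generated by homogeneous elements of degree $2$.

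For (c): fix $(\lambda,\mu) \ne (0,0)$, set $h := \lambda f + \mu g$, $D := A/hA$, and $\mathcal C_h := \Proj(D) = X \cap \{h=0\} \subset \mathbb P^{r-1}_K$; as above, $h^1_D(n) = h^1(\mathbb P^{r-1}_K, \mathcal I_{\mathcal C_h}(n))$ for all $n$. The point is that for a generic pencil $\langle f, g \rangle$ \emph{every} member $\mathcal C_h$ is again a reduced, irreducible, non-degenerate curve of degree $d$; granting this, its generic hyperplane section is a reduced set of $d \le 2(r-2)+1$ points in linearly general position, which by the first numerical fact imposes independent conditions on quadrics, so $h^1$ of its ideal sheaf vanishes in all degrees $\ge 2$. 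Mumford's decent lemma \cite{M}, applied to $\mathcal C_h \subset \mathbb P^{r-1}_K$ and its generic hyperplane section, then gives $h^1_D(n) \le \max\{h^1_D(n-1)-1, 0\}$ for all $n \ge 2$, which is (c).

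For (d): apply (c) with $(\lambda,\mu) = (1,0)$; iterating from degree $m$ gives $h^1_C(n) \le \max\{h^1_C(m) - (n-m), 0\}$ for $n \ge m \ge 1$, hence $h^1_C(n) = 0$ for $n \ge m + h^1_C(m)$. From the exact sequence $0 \to \mathcal I_X(n-1) \xrightarrow{f} \mathcal I_X(n) \to \mathcal I_{\mathcal C/\mathbb P^{r-1}_K}(n) \to 0$ — exact for generic $f$ since $\mathcal I_X$ is torsion-free and $f$ is $\mathcal O_X$-regular — one reads off $h^1_C(n) \le h^1_A(n) + h^2_A(n-1)$ for all $n$ (using Remark \ref{4.2} and $r > 4$); in particular $m + h^1_C(m) \le h^1_A(m) + h^2_A(m-1) + m$, so $h^1_C(n) = 0$ for $n \ge h^1_A(m) + h^2_A(m-1) + m$. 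For such $n$ the same exact sequence shows that multiplication by $f$ from $H^1(\mathbb P^r_K, \mathcal I_X(n-1))$ to $H^1(\mathbb P^r_K, \mathcal I_X(n))$ is surjective, so Mumford's decent lemma \cite{M} yields $h^1_A(n) \le \max\{h^1_A(n-1)-1, 0\}$, which is (d). The genuinely delicate step is the Bertini-type claim in (c) — not that a single generic hyperplane section of $X$ is a good curve, but that \emph{every} member of a generic pencil is one, which forces one to control the finitely many tangent members and, should $X$ be a cone, the member through the vertex; here one adapts the method of \cite[4.4--4.6]{B1}. Everything else is bookkeeping with Hilbert functions, the General Position Theorem, and Mumford's decent lemma.
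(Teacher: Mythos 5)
Your parts (a) and (b) are correct and essentially the paper's argument: the generic double section $Z$ consists of $d\le 2(r-2)$ points in linearly general position, so it imposes independent conditions on quadrics (giving (b)) and satisfies $N_1$ by \cite[Theorem 1]{GL} (giving (a)). The proof of (c), however, has a genuine gap at exactly the step you flag as delicate: you need \emph{every} member $\mathcal C_h=X\cap\{h=0\}$, $h=\lambda f+\mu g$, of the generic pencil to be a reduced, irreducible, non-degenerate curve, and this is false in the generality of the proposition. If $X$ is a cone, the unique member of the pencil through the vertex cuts out a union of rulings; if $X$ carries a one-parameter family of lines (as do the projected scrolls that are the main examples of this paper), the hyperplanes containing one of these lines sweep out a divisor in the dual space, so every pencil contains members whose section is reducible. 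No Bertini-type refinement repairs this. The paper's proof of (c) avoids the geometry of individual members altogether: writing $\ell=\lambda f+\mu g$ with $\mu\ne 0$, one has $(f,\ell)A=(f,g)A$, so the \emph{second} section is always the same good scheme $Z$; one checks that $f$ is regular on $\bar{E}:=(A/\ell A)/H^0(A/\ell A)$ (using only that $f,g$ form a regular sequence locally at the relevant primes, a genericity condition on the pair), and the resulting exact sequences $0\to H^0(\bar{E}/f\bar{E})_n\to H^1(A/\ell A)_{n-1}\to H^1(A/\ell A)_n\to 0$ for $n\ge 2$ (using (b)), combined with the fact that $H^0(\bar{E}/f\bar{E})$ is a quotient of $H^0(A/(f,g)A)$ and hence generated in degree $2$ by (a), force strict descent: if the kernel term vanished in some degree $n_0\ge 2$ it would vanish in all later degrees, freezing $h^1_{A/\ell A}$ at a nonzero value and contradicting its eventual vanishing.

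There is a second, related gap in (d), and it explains why (c) must be proved for \emph{all} $(\lambda,\mu)$ rather than for the generic member. Surjectivity of multiplication by the single form $f$ on $H^1(A)_{n-1}\to H^1(A)_n$ only gives $h^1_A(n)\le h^1_A(n-1)$; ``Mumford's descent lemma'' is not the statement that surjectivity of one linear form yields a drop by $1$. The paper obtains the drop from \cite[Lemma 3.2]{B1}: since statement (c) gives $h^1_{A/(\lambda f+\mu g)A}(n)=0$, hence surjectivity of $\lambda f+\mu g$ on $H^1(A)_{n-1}\to H^1(A)_n$, \emph{uniformly} for every $(\lambda,\mu)\ne(0,0)$ once $n\ge h^1_A(m)+h^2_A(m-1)+m$, an equality $h^1_A(n)=h^1_A(n-1)>0$ would make every member of the pencil act as an isomorphism, while $\det(\lambda\phi_f+\mu\phi_g)$ is a nonzero form of positive degree in $(\lambda,\mu)$ and must vanish somewhere on $\mathbb P^1_K$ --- a contradiction. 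With only the generic member under control, that zero could occur at an uncontrolled member and no contradiction arises. So the two gaps are really one: the whole point of the proposition is to control all members of the pencil by purely algebraic means, and that is the part your argument does not supply.
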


\begin{proof} (a): By our choice of $f$ and $g$ the scheme $Z = \Proj(A/(f,g)A) \subset
\mathbb P^{r-2}_K$ consists of $d = 2(r-2) + 1 - (2r-d-3)$ points in linearly general
position. So, by \cite[Theorem 1]{GL} this scheme $Z$ satisfies
the property $N_{2r-d-3},$ with $2r-d-3 \geq 1.$ Therefore the vanishing ideal
\[
I_Z = (f,g,I)^{\sat}/(f,g)S \subset S/(f,g)S
\]
of $Z$ is generated by homogeneous elements of degree 2. So, the same is true for
\[
I_Z/((f,g,I)/(f,g)S) \simeq (f,g,I)^{\sat}/(f,g, I) \simeq H^0(A/(f,g)A).
\]

(b): The scheme $Z = \Proj (A/(f,g)A) \subset \mathbb P^{r-2}_K$ consists of $d$ points
in semi-uniform position. So, by \cite[Lemma 2.4 (a)]{BS1} we have:
\[
h^1_{A/(f,g)A}(n) \leq \max \{d-1 -n(r-2), 0\}
\]
for all $n \geq 0.$ As of $d \leq 2r - 4$ we get our claim.

(c): Without loss of generality we may assume that $\mu \not= 0.$ We set $\ell := \lambda f + \mu g,$
so that $(f, \ell)A = (f,g)A.$ We put $E := A/\ell A$ and $\bar{E} := E/H^0(E).$
Our first claim is, that $f$ is $\bar{E}$-regular. Indeed, otherwise we would find some
$\mathfrak p \in \Ass \bar{E} = \Ass E \setminus \{S_+\}$ with $f \in \mathfrak p.$
As $f$ and $\ell$ are $A$-regular, it would follow $\mathfrak p \in \Ass A/fA \setminus \{S_+\}.$
By $gA \subset (f, \ell) A$ and $f, \ell \in \mathfrak p$ we also would have $g \in \mathfrak p$
and so $f,g$ would not form a regular sequence with respect to $A_{\mathfrak p}.$ This would
contradict the genericity of the pair $(f,g) \in S^2_1.$

So, $f$ is $\bar{E}$-regular and we get a short exact sequence
\[
0 \to \bar{E}(-1) \stackrel{f}{\to} \bar{E} \to \bar{E}/f \bar{E} \to 0.
\]
In view of the natural isomorphisms $H^1(\bar{E}) \simeq H^1(E)$ and
\[
H^1(\bar{E}/f\bar{E}) \simeq H^1(E/fE) \simeq H^1(A/(\ell, f)A) \simeq H^1(A/(f,g)A)
\]
statement (b) shows that for all $n \geq 2$ there is an exact sequence
\[
0 \to H^0(\bar{E}/f\bar{E})_n \to H^1(E)_{n-1} \to H^1(E)_n \to 0.
\]
Moreover there is an epimorphism of graded $S$-modules $H^0(A/(f,g)A) \twoheadrightarrow H^0(\bar{E}/f\bar{E}).$
So, by statement (a) the module $H^0(\bar{E}/f\bar{E})$ is generated by homogeneous
elements of degree 2. But now, the above sequences and the fact that $H^1(E)_n = 0$ for all $n \gg 0$ imply that for all $n \geq 2$ we have
$h^1_E(n) \leq \max \{h^1_E(n-1) -1, 0\}.$ This is our claim.

(d): Let $(\lambda, \mu) \in K^2 \setminus \{(0,0)\}$ and set $h = \lambda f + \mu g.$ If we apply cohomology to the short exact sequence
\[
0 \to A(-1) \stackrel{h}{\to} A \to A/hA \to A
\]
we get exact sequences
\[
H^1(A)_{n-1} \stackrel{h}{\to} H^1(A)_n \to H^1(A/hA) \to H^2(A)_{n-1}.
\]
Applying this with $n = m,$ we see that
\[
h^1_{A/hA}(m) \leq h^1_A(m) + h^2_A(m-1).
\]
So, by statement (c) we get $h^1_{A/hA}(n) = 0$ and hence an epimorphism $H^1(A)_{n-1} \twoheadrightarrow H^1(A)_n$ for all $n \geq h^1_A(m) + h^2_A(m-1) + m$ and all pairs $(\lambda, \mu) \in K^2\setminus \{(0,0)\}.$ By \cite[Lemma 3.2]{B1} we may conclude that for all $n \geq h^1_A(m) + h^2_A(m-1) + m$ we have
\[
h^1_A(n) \leq \max \{h^1_A(n-1) - 1,0\}.
\]
\end{proof}

\begin{notation and remark} \label{4.4} (A) Later we shall be interested in the place, at which the Hartshorne-Rao function $n \mapsto h^1_A(n) = h^1(\mathbb P^r_K, {\mathcal I}_X(n))$ definitively descents, that is in the invariant
\[
\delta (X) := \inf \{m \in \mathbb Z | h^1_A(n) \leq \max \{h^1_A(n-1) - 1, 0\} \text{ for all } n > m\}.
\]
(B) If $T = \oplus_{n \in \mathbb Z} T_n$ is a graded $S$-module, we define the {\it beginning} and the {\it end} of $T$
by
\[
\beg (T) := \inf \{n \in \mathbb Z | T_n \not= 0\}, \e (T) = \sup \{n \in \mathbb Z | T_n \not= 0\}.
\]
\end{notation and remark}

\begin{lemma} \label{4.5} Let $\mathbb H := \mathbb P^{r-1}_K \subset \mathbb P^r_K$ be a hyperplane such
that the intersection curve ${\mathcal C} := X \cap \mathbb H \subset \mathbb H$ is reduced and irreducible.
Then:
\begin{itemize}
  \item[(a)] $\reg {\mathcal C} \leq \min \{ \reg X , d-r+3\}.$
  \item[(b)] $\delta(X) \leq \min \{ \reg {\mathcal C} -1, \e H^1(A) \} \leq \reg X -2. $
\end{itemize}
\end{lemma}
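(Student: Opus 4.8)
The plan is to relate the regularity of the hyperplane section $\mathcal{C} = X \cap \mathbb{H}$ to that of $X$ via the standard cohomological characterization of Castelnuovo--Mumford regularity, and then to feed this into Mumford's lemma on the descent of the Hartshorne--Rao function. For part (a), I would first observe that since $\mathbb{H}$ is chosen so that $\mathcal{C}$ is reduced and irreducible, a generic such hyperplane exists and the linear form $\ell$ cutting out $\mathbb{H}$ is a nonzerodivisor on $A/H^0(A)$; restricting to $\mathbb{H}$ then gives $\reg(A/\ell A) \leq \reg A$ by the usual exact sequence argument (the regularity of a module does not increase upon passing to a generic hyperplane section, up to the saturation issue, which is harmless here since we only care about the sheaf $\mathcal{C}$ and hence about $\reg$ of its saturated ideal). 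This yields $\reg \mathcal{C} \leq \reg X$. For the bound $\reg \mathcal{C} \leq d - r + 3$: the curve $\mathcal{C} \subset \mathbb{P}^{r-1}_K$ is non-degenerate irreducible of degree $d$, so by the theorem of Gruson--Lazarsfeld--Peskine (quoted in the introduction for curves) one has $\reg \mathcal{C} \leq \deg \mathcal{C} - (\dim \mathbb{P}^{r-1}) + 2 = d - (r-1) + 2 = d - r + 3$. Combining the two inequalities gives (a).

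For part (b), I would unwind the definition of $\delta(X)$. By Remark \ref{4.2} we have $h^1_A(n) = h^1(\mathbb{P}^r_K, \mathcal{I}_X(n))$ for $n \neq 0$, so $\delta(X) - 1$ is essentially $\e H^1(A)$ controlled by the place where $H^1(A)$ vanishes, but Mumford's lemma gives the sharper descent statement. Concretely: by definition $\delta(X) \leq \e H^1(A)$ is almost immediate, since once $n > \e H^1(A)$ we have $h^1_A(n) = 0 \leq \max\{h^1_A(n-1)-1,0\}$; actually one must be slightly careful and I would invoke Mumford's lemma (the $(\lambda f + \mu g)$-version already appears as Proposition \ref{4.3}(c)--(d), but in its classical one-variable form it says that once $h^1_A$ starts strictly decreasing it keeps strictly decreasing until it hits $0$) to get the clean bound $\delta(X) \leq \e H^1(A)$. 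Then I would bound $\e H^1(A)$ by $\reg \mathcal{C} - 1$: since $\ell$ is a nonzerodivisor on $A/H^0(A)$ and $\reg$ of the saturated coordinate ring of $\mathcal{C}$ equals $\reg \mathcal{C}$, the graded module $H^1(A)$ is, up to the finite-length piece $H^0(A)$ (whose end is bounded by $\reg A - 1 \geq \reg\mathcal{C}-1$ anyway), annihilated in degrees $\geq \reg \mathcal{C} - 1$ by the cohomological vanishing $H^1(\mathbb{P}^{r-1}, \mathcal{I}_{\mathcal{C}}(n)) = 0$ for $n \geq \reg\mathcal{C} - 1$ pulled back through the hyperplane restriction sequence $H^1(A)(-1) \xrightarrow{\ell} H^1(A) \to H^1(A/\ell A)$. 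This gives $\e H^1(A) \leq \reg\mathcal{C} - 1$, hence $\delta(X) \leq \min\{\reg\mathcal{C}-1, \e H^1(A)\} = \reg\mathcal{C}-1$. Finally $\reg\mathcal{C} - 1 \leq \reg X - 2$ is exactly part (a) rewritten (using $\reg\mathcal{C} \leq \reg X - 1$, which needs a little more than the bare $\reg\mathcal{C}\leq\reg X$: one uses that a generic hyperplane section of a variety of positive dimension has regularity strictly less than that of the variety, which follows again from the hyperplane-section exact sequence together with $H^0(A/\ell A)$ being concentrated in degrees $< \reg A$).

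The main obstacle I expect is the bookkeeping in the second inequality of (b) — precisely controlling $\e H^1(A)$ and justifying $\reg\mathcal{C} \leq \reg X - 1$ rather than merely $\leq \reg X$. The point is that regularity of $A$ is computed from several local cohomology modules $H^i(A)$, $0 \leq i \leq \dim A = 3$, while the regularity of $\mathcal{C}$ only sees $H^i(A/\ell A)$ for $i \leq 2$; one must check that the "lost" contribution from $H^3(A)$ (equivalently $H^2(X,\mathcal{O}_X)$, which vanishes for a surface) and the shift by one in the exact sequences $0 \to H^i(A)_{n}/\ell H^i(A)_{n-1} \to H^i(A/\ell A)_n \to (0 :_{H^{i+1}(A)} \ell)_{n-1} \to 0$ together yield the strict drop. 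Since $X$ has dimension $2$, the top relevant cohomology in the sequence is $H^2(A)$ feeding into $H^2(A/\ell A)$, and the shift gives the required $-1$; I would make this explicit and cite \cite{M} for Mumford's lemma and the regularity bound, and \cite{GLP} for the curve case of the GLP bound.
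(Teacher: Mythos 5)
Part (a) of your proposal is fine and is exactly the paper's argument (regularity does not increase under the hyperplane section, plus the Gruson--Lazarsfeld--Peskine bound for the curve $\mathcal{C}\subset\mathbb{P}^{r-1}_K$). The trouble is in part (b), where two of your steps are not only unjustified but actually false in general.

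First, you claim $\e H^1(A)\leq \reg\mathcal{C}-1$ by ``pulling back'' the vanishing $H^1(\mathcal{I}_{\mathcal C}(n))=0$ for $n\geq\reg\mathcal{C}-1$ through the sequence $H^1(A)_{n-1}\stackrel{\ell}{\to}H^1(A)_n\to H^1(A/\ell A)_n.$ That vanishing only makes the multiplication map $\ell\colon H^1(A)_{n-1}\to H^1(A)_n$ surjective in those degrees, i.e.\ $h^1_A$ is non-increasing there; surjectivity plus eventual vanishing does not force $H^1(A)_n=0$ from $n=\reg\mathcal{C}-1$ on (the Hartshorne--Rao module of $X$ is precisely the part not controlled by the section in this way). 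Mumford's lemma gives the strict descent $h^1_A(n)\leq\max\{h^1_A(n-1)-1,0\}$ for $n>\reg\mathcal{C}-1$, which yields $\delta(X)\leq\reg\mathcal{C}-1$ \emph{directly} --- this is the paper's route --- but it does not bound $\e H^1(A)$ by $\reg\mathcal{C}-1$ (the value $h^1_A(\reg\mathcal{C}-1)$ may be large, so $H^1(A)$ can survive well beyond $\reg\mathcal{C}-1$); this is exactly why the lemma is stated with a minimum of the two quantities rather than comparing them. Combined with the trivial observation $\delta(X)\leq\e H^1(A)$ (for $n>\e H^1(A)$ one has $h^1_A(n)=0$), this gives the first inequality of (b) without your intermediate claim.

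Second, your justification of the last inequality via ``$\reg\mathcal{C}\leq\reg X-1$ because a generic hyperplane section has strictly smaller regularity'' is wrong on two counts: the hyperplane in the lemma is not assumed generic, and even for generic sections strict decrease fails --- if $X$ is of arithmetic depth $>1$ then $\sreg X=\reg X$ (Remark \ref{5.2} (B)), and Example \ref{7.1} exhibits a reduced irreducible section $\mathcal{D}$ with $\reg\mathcal{D}=\reg X=4,$ so $\reg\mathcal{D}-1\leq\reg X-2$ fails there. The lemma does not need it: the chain $\min\{\reg\mathcal{C}-1,\e H^1(A)\}\leq\e H^1(A)\leq\reg X-2$ closes using only the definition of regularity, namely $H^1(A)_n\simeq H^1(\mathbb{P}^r_K,\mathcal{I}_X(n))=0$ for $n\geq\reg X-1.$
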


\begin{proof} (a): The relation $\reg {\mathcal C} \leq \reg X$
is well known. Moreover,
by \cite{GLP} we have
\[
\reg {\mathcal C} \leq \deg {\mathcal C} -(r-1) + 2 = d-r+3.
\]
(b): By Mumford's Lemma on the descent of the Hartshorne-Rao  function
\[
n \mapsto h^1_A(n) = h^1(\mathbb P^r_K, {\mathcal I}_X(n))
\]
(cf. \cite[page 102, statement \#']{M}) we have
\[
h^1_A(n) \leq \max \{h^1_A(n-1) - 1, 0\} {\text{ for all }}
n > \reg {\mathcal C} -1,
\]
 so that $\delta (X) \leq \reg {\mathcal C} - 1.$ Clearly $\delta (X) \leq
\e H^1(A) \leq \reg X -2.$
\end{proof}

\begin{corollary} \label{4.6} Let $4 < d < 2r -4.$  Then $\delta (X) \leq  \min \{
d-r+2, h^1_A(1) + h^2_A(0) \}.$
\end{corollary}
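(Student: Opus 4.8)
The plan is to derive Corollary \ref{4.6} by combining the two preceding results in this section, namely Proposition \ref{4.3}(d) and Lemma \ref{4.5}(b). The inequality $\delta(X) \leq d-r+2$ is already contained in Lemma \ref{4.5}(b): indeed, since the generic hyperplane section ${\mathcal C} = X \cap \mathbb H$ is a non-degenerate irreducible curve in $\mathbb P^{r-1}_K$ of degree $d$, by \cite{GLP} we have $\reg {\mathcal C} \leq d-(r-1)+2 = d-r+3$, hence $\delta(X) \leq \reg {\mathcal C} - 1 \leq d-r+2$. (One should check that the hypotheses of Lemma \ref{4.5} are met, i.e.\ that a generic hyperplane section is reduced and irreducible; this is Bertini, valid since $X$ is irreducible and $K$ is algebraically closed, and $r > 4$.) So the only real content is the bound $\delta(X) \leq h^1_A(1) + h^2_A(0)$.

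For that bound I would apply Proposition \ref{4.3}(d) with the choice $m = 1$. First I must verify the numerical hypothesis $6 \leq r+1 \leq d \leq 2r-4$ of Proposition \ref{4.3}: the hypothesis of the corollary is $4 < d < 2r-4$, so $d \leq 2r-5 \leq 2r-4$, and $r \geq 5$ gives $r+1 \geq 6$; one also needs $d \geq r+1$. Here I expect the intended reading of ``$4 < d < 2r-4$'' together with the standing assumption $d > r$ from Notation \ref{4.1} to supply $d \geq r+1$ (and in fact the statement \ref{1.4} quoted in the introduction reads $r < d \leq 2r-4$, confirming this). Granting the hypotheses, Proposition \ref{4.3}(d) with $m=1$ says that for all $n \geq h^1_A(1) + h^2_A(0) + 1$ we have $h^1_A(n) \leq \max\{h^1_A(n-1)-1, 0\}$. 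By the very definition of $\delta(X)$ in Notation and Remark \ref{4.4}(A), this means precisely $\delta(X) \leq h^1_A(1) + h^2_A(0)$.

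Combining the two bounds gives $\delta(X) \leq \min\{d-r+2,\ h^1_A(1) + h^2_A(0)\}$, which is the assertion. The argument is essentially a matter of quoting \ref{4.3}(d) and \ref{4.5}(b) with the right parameters; I anticipate no deep obstacle. The one point requiring a little care — and the place where a careless reader might stumble — is the bookkeeping on the hypotheses: checking that the degree range ``$4 < d < 2r-4$'' in the corollary is compatible with, and implies, the range ``$r+1 \leq d \leq 2r-4$'' needed to invoke Proposition \ref{4.3} (this is where the standing hypothesis $d > r$ from \ref{4.1}(A) is used), and confirming that a generic hyperplane section of $X$ is reduced and irreducible so that Lemma \ref{4.5} applies. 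Both are routine but should be stated explicitly.
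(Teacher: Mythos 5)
Your proposal matches the paper's proof, which is exactly the one-line argument "apply Lemma \ref{4.5} (a), (b) and Proposition \ref{4.3} (d) with $m=1$"; your additional bookkeeping on the degree range and the Bertini check for the generic hyperplane section is correct and only makes explicit what the paper leaves implicit.
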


\begin{proof} Apply statements (a) and (b) of Lemma \ref{4.5} and
Proposition \ref{4.3} (d) with $m = 1.$
\end{proof}

We close this section with another result which helps to pave the way for our investigations
of surfaces of maximal sectional regularity.

\begin{notation} \label{4.7} (A) We introduce the invariant
\[
e(X) := \sum_{x \in X, x \text{ closed}} 
\lambda_{{\mathcal O}_{X,x}}(H^1_{{\mathfrak m}_{X,x}}({\mathcal O}_{X,x})) (< \infty)
\]
which counts the number of non-Cohen-Macaulay points on $X$ in a weighted way.

(B) By $\sigma (X)$ we denote the {\sl sectional genus} of $X,$ that is the arithmetic genus of the generic hyperplane section of $X.$ So,  we have that
\[
\sigma (X) = h^2_{A/fA}(0),
\]
for a generic linear form $f \in S_1.$

(C) We denote the {\it normal locus}, the {\it Cohen-Macaulay locus} and the
{\it singular locus} of $X$ respectively by $\Nor (X), \CM(X)$ and $\Sing (X).$
\end{notation}

\begin{remark} \label{4.8} (A) According to \cite[Proposition 5.9]{B0} we have
\[
e(X) \leq h^2_A(n-1) \leq \max \{ e(X), h^2_A(n) -1\} {\text{ for all }} n \leq 0.
\]

(B) As $X$ is a surface, we have $\# (X \setminus \Nor(X)) < \infty$ if and only if $\# \Sing (X) <
\infty.$

(C) Moreover, by Bertini's Theorem the generic hyperplane section ${\mathcal C} = X \cap \mathbb P^{r-1}_K$
is smooth, if and only if $\Sing (X)$ is a finite set.

(D) For a generic linear form $f \in S_1$ we have $h^1_{A/fA}(0) = 0$ and
$h^3_{A/fA}(0) = 0.$ As $\sigma (X) = h^2_{A/fA}(0),$ the
short exact sequence $0 \to A(-1) \to A \to A/fA \to 0$ yields:
\[
\sigma (X) = h^2_A(0) - h^2_A(-1) - ( h^3_A(0) - h^3_A(-1)).
\]
In particular, $\sigma (X)$ is the sectional genus of the polarized pair $(X, {\mathcal O}_X(1))$
in the sense of Fujita \cite{F}.
\end{remark}

\begin{proposition} \label{4.9} Assume that $\sigma (X) = 0.$ Let $f \in S_1$ be a generic linear form and
set ${\mathcal C} = \Proj A/fA.$ Then:
\begin{itemize}
  \item[(a)] \begin{itemize}
  \item[(i)] $h^2_A(-1) = h^2_A(0),$
  \item[(ii)]  $h^2_A(0) - h^2_A(1) = h^1_{A/fA}(1) - h^1_A(1) \geq 0,$
  \item[(iii)] $h^2_A(n) \leq \max \{ 0, h^2_A(n-1) -1 \}$ for all $n \geq 2,$
  \item[(iv)] $h^3_A(n) = 0$ for all $n \geq -1.$ \end{itemize}
  \item[(b)] ${\mathcal C} \simeq \mathbb P^1_K$ if and only if $\# \Sing (X) < \infty.$
  \item[(c)] If $4 < r < d \leq 2r - 2$ and $\# \Sing (X) < \infty,$ then $h^2_A(n) = e(X)$ for all $n \leq 0.$
\end{itemize}
\end{proposition}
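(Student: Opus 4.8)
The plan is to extract everything from the short exact sequence $0\to A(-1)\xrightarrow{\,f\,}A\to B\to 0$, where $B:=A/fA$ (exact since $A$ is a domain and $f$ generic), through the induced long exact local cohomology sequences, whose degree-$n$ pieces read
\[
\cdots\to H^i(A)_{n-1}\xrightarrow{\,f\,}H^i(A)_n\to H^i(B)_n\to H^{i+1}(A)_{n-1}\xrightarrow{\,f\,}H^{i+1}(A)_n\to\cdots .
\]
Two preliminary facts drive the argument. First, since $X$ is non-degenerate and connected, $H^1(\mathbb P^r_K,{\mathcal I}_X)=0$, i.e. $h^1_A(0)=0$; and by Remark~\ref{4.8}(D) also $h^1_B(0)=0$. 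Secondly, restricting ${\mathcal O}_{\mathcal C}$ along a generic hyperplane of $\mathbb P^{r-1}_K$ (a nonzerodivisor on ${\mathcal O}_{\mathcal C}$, as $X$ is a surface) gives $h^2_B(n)\le h^2_B(n-1)$ for all $n$, so from $h^2_B(0)=\sigma(X)=0$ we obtain $h^2_B(n)=0$ for all $n\ge 0$.

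For part (a): since $\dim B=2$ one has $H^3(B)=0$, so for $n\ge 1$ the piece $0=H^2(B)_n\to H^3(A)_{n-1}\xrightarrow{f}H^3(A)_n\to 0$ gives $H^3(A)_{n-1}\cong H^3(A)_n$, which with Serre vanishing forces $h^3_A(n)=0$ for all $n\ge 0$; feeding this together with $h^1_B(0)=h^2_B(0)=0$ into the degree-$0$ piece collapses it to the exact sequences $0\to H^2(A)_{-1}\xrightarrow{f}H^2(A)_0\to 0$ and $0\to H^3(A)_{-1}\to 0$, giving at once $h^2_A(-1)=h^2_A(0)$ and $h^3_A(-1)=0$, that is (i) and (iv). For (iii), the sheaf sequences $0\to{\mathcal O}_X(n-1)\xrightarrow{f}{\mathcal O}_X(n)\to{\mathcal O}_{\mathcal C}(n)\to 0$ together with $H^1({\mathcal C},{\mathcal O}_{\mathcal C}(n))=h^2_B(n)=0$ for $n\ge 0$ make every multiplication map $H^1(X,{\mathcal O}_X(n-1))\to H^1(X,{\mathcal O}_X(n))$ surjective for $n\ge 0$, and Mumford's lemma on the descent of the Hartshorne--Rao function, applied to ${\mathcal O}_X$ as it is for Lemma~\ref{4.5}(b), then gives $h^2_A(n)\le\max\{h^2_A(n-1)-1,0\}$ for $n\ge 2$. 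For (ii), the degree-$1$ piece $H^1(A)_1\to H^1(B)_1\to H^2(A)_0\xrightarrow{f}H^2(A)_1\to H^2(B)_1=0$ shows $f$ surjective there (so $h^2_A(0)-h^2_A(1)\ge 0$), while $h^1_A(0)=0$ makes $H^1(A)_1\hookrightarrow H^1(B)_1$, whence the kernel of $f$ on $H^2(A)_0$ has dimension $h^1_B(1)-h^1_A(1)$ — exactly the asserted identity.

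For part (b): if $\#\Sing(X)<\infty$ then ${\mathcal C}$ is smooth (Remark~\ref{4.8}(C)) and irreducible (Bertini, $\dim X\ge 2$), of arithmetic genus $h^1({\mathcal O}_{\mathcal C})=h^2_B(0)=\sigma(X)=0$, hence ${\mathcal C}\simeq\mathbb P^1_K$; conversely ${\mathcal C}\simeq\mathbb P^1_K$ is smooth, so $\#\Sing(X)<\infty$ by Remark~\ref{4.8}(C). For part (c): by (a)(i) together with Remark~\ref{4.8}(A) at $n=0$ we have $e(X)\le h^2_A(0)=h^2_A(-1)\le\max\{e(X),h^2_A(0)-1\}$; were $h^2_A(0)>e(X)$ the right-hand side would be $h^2_A(0)-1<h^2_A(0)$, absurd, so $h^2_A(0)=e(X)$, and then Remark~\ref{4.8}(A) at $n=m$ ($\le 0$) gives $e(X)\le h^2_A(m-1)\le\max\{e(X),h^2_A(m)-1\}$, so downward induction yields $h^2_A(n)=e(X)$ for all $n\le 0$. (Alternatively: by (b) the general hyperplane section is $\simeq\mathbb P^1_K$, so on a desingularization $\widehat X$ of $X$ a general member ${\mathcal C}'$ of the pullback of $|{\mathcal O}_X(1)|$ is a nef and big rational curve with $K_{\widehat X}\cdot{\mathcal C}'<0$ by adjunction, hence $\widehat X$ is rational, $h^1({\mathcal O}_{\widehat X})=0$, and, as $\#\Sing(X)<\infty$ makes $X$ regular in codimension one, $h^2_A(0)=h^1(X,{\mathcal O}_X)=e(X)$.) The part needing the most care is (a)(iii), which genuinely invokes Mumford's descent lemma rather than the long exact sequence alone; the rest of (a) is homological bookkeeping once the two preliminary vanishings are secured, and (b), (c) then follow quickly.
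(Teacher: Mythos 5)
Your proof is correct, and for parts (a)(i), (ii), (iv) and (b) it coincides with the paper's argument (the long exact cohomology sequence of $0 \to A(-1) \to A \to A/fA \to 0$ plus $h^1_{A/fA}(0)=0$, $h^2_{A/fA}(n)=0$ for $n \geq 0$, and Remark \ref{4.8} (C)). Where you genuinely diverge is in (a)(iii) and (c): the paper disposes of both by citations to an earlier paper of Brodmann (Proposition 3.5 (b), resp.\ Proposition 3.8 (b) of \cite{B2}), while you argue directly --- for (iii) by applying Mumford's descent lemma to the coherent sheaf ${\mathcal O}_X$, and for (c) by combining (a)(i) with the two-sided estimate of Remark \ref{4.8} (A) and a downward induction. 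Your route is more self-contained; note in particular that your derivation of (c) never uses the hypotheses $d \leq 2r-2$ and $\# \Sing(X) < \infty$, which enter only through the cited result \cite[Proposition 3.8 (b)]{B2} in the paper's version, so modulo Remark \ref{4.8} (A) you actually prove a slightly stronger statement. Two small cautions. First, in (iii) the operative hypothesis of Mumford's lemma is not the surjectivity of the maps $H^1({\mathcal O}_X(n-1)) \to H^1({\mathcal O}_X(n))$ --- surjectivity alone only gives that $h^2_A$ is non-increasing, as the constancy $h^2_A(n) = e(X)$ for $n \leq 0$ in (c) illustrates --- but rather the vanishing $H^1({\mathcal C}, {\mathcal O}_{\mathcal C}(n)) = 0$ for $n \geq 0$ (i.e.\ the $1$-regularity of ${\mathcal O}_{\mathcal C}$ on $\mathbb P^{r-1}_K$), which supplies the global generation and the surjectivity of $H^0({\mathcal O}_{\mathcal C}(n)) \otimes S_1 \to H^0({\mathcal O}_{\mathcal C}(n+1))$ needed to propagate bijectivity of multiplication by $f$ and force $h^2_A$ to drop strictly until it vanishes; you do establish this vanishing among your preliminary facts, so the application is legitimate, but the sentence should hang the weight there rather than on surjectivity. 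Second, your parenthetical alternative for (c) via a desingularization and rationality is only a sketch (the step $h^1(X,{\mathcal O}_X) = e(X)$ is asserted, not proved) and should not be leaned on; your main argument for (c) does not need it.
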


\begin{proof} (a): We have $h^2_{A/fA}(0) = \sigma (X) = 0,$ so
that $h^2_{A/fA}(n) = 0$ for all $n \geq 0.$ As $h^1_{A/fA}(n) = 0$ for all
$n \leq 0$ and $h^3_A(n) = 0$ for all $n \gg 0$ the statements
(i), (ii), and (iv) follow immediately from the short exact
sequence $0 \to A(-1) \to A \to A/fA \to 0.$ Statement (iii) follows
from \cite[Proposition 3.5 (b)]{B2}.

(b): As ${\mathcal C}$ is of arithmetic genus 0, it is smooth if and
only if it is isomorphic to $\mathbb P^1_K.$ Now, we conclude by Remark
\ref{4.8} (b).

(c): According to \cite[Proposition 3.8 (b)]{B2} we have $h^2_A(n) = e(X)$ for all $n < 0.$
We thus get our claim by statement (a) (i).
\end{proof}

\section{Surfaces of maximal sectional regularity}
We keep the notations and hypothesis of the previous section.

\begin{definition} \label{5.1} We define the {\sl sectional regularity} $\sreg X$ as the
least value of $\reg X \cap \mathbb H,$ where $\mathbb H = \mathbb P^{r-1}_K \subset
\mathbb P^r_K$ runs through all hyperplanes of $\mathbb P^r_K.$ Thus we may write
\[
\sreg X = \min \{ \reg (\Proj (A/fA)) | f \in S_1 \setminus \{0\} \}.
\]
\end{definition}

\begin{remark} \label{5.2} (A) For all $f \in S_1 \setminus \{0\}$ we have
\[
\reg (\Proj (A/fA)) = \max \{ \e H^1(A/fA) +2, \e H^2(A/fA) +3\}
\]
and so the short exact sequences
\begin{equation*}
\begin{gathered}
H^1(A)_{n-1} \stackrel{f}{\to} H^1(A)_n \to H^1(A/fA)_n \to H^2(A)_{n-1} \stackrel{f}{\to} H^2(A)_n \\
\to H^2(A/fA)_n \to H^3(A)_{n-1} \stackrel{f}{\to} H^3(A)_n
\end{gathered}
\end{equation*}
yield that the set
\[
\{ f \in S_1 \setminus \{0\} | \reg (\Proj (A/fA)) = \sreg X\}
\]
is dense and open in $S_1.$ In particular, for a generic hyperplane section $\mathcal C
= X \cap \mathbb P^{r-1}_K$ of $X$ we have $\reg {\mathcal C} = \sreg X.$ So that $\sreg X$ is the
regularity of the generic hyperplane section of $X.$

(B) As the generic hyperplane section $\mathcal C = X \cap \mathbb P^{r-1}_K$ of $X$ is reduced
and irreducible it follows 
\begin{equation*}
\begin{gathered}
\sreg X \leq \reg X, {\text{ with equality if }} X {\text{ is of arithmetic depth}} > 1;\\
\sreg X \leq d-r+3, {\text{ and }} \delta (X) \leq \sreg X -1.
\end{gathered}
\end{equation*}
\end{remark}

\begin{definition} \label{5.3} In view of Remark \ref{5.2} (B) it makes sense to say that the surface
$X \subset \mathbb P^r_K$ is of {\sl maximal sectional regularity} if $\sreg X = d-r+3.$ It is equivalent
to say that the generic hyperplane section $\mathcal C = X \cap \mathbb P^{r-1}_K \subset \mathbb P^{r-1}_K$
satisfies $\reg {\mathcal C} = d-r+3$ and thus is a curve of maximal regularity.
\end{definition}

\begin{remark} \label{5.4} (A) Assume that the surface $X \subset \mathbb P^r_K$ is of maximal
sectional regularity. Assume that $d = \deg X > r.$ Then, the generic hyperplane section
curve ${\mathcal C} = X \cap \mathbb P^{r-1}_K \subset \mathbb P^{r-1}_K$ is of maximal
regularity and of degree $d > r-1.$ So by \cite[Table on p. 505]{GLP} the curve $\mathcal C$
is smooth and rational. Moreover, it follows (cf. Proposition \ref{4.9} (b))
\[
{\mathcal C} \simeq \mathbb P^1_K, \sigma (X) = 0, \text{ and } \# \Sing (X) < \infty.
\]
(B) If a surface $X \subset \mathbb P^r_K$ of degree $d$ is of maximal sectional regularity, it
satisfies $\reg X \geq d-r+3$ by Remark \ref{5.2} (B). On the other hand there are examples showing that $\reg X =
d-r+3$ need not imply that $X$ is of maximal sectional regularity (cf. Example \ref{7.1} (A)).
So, being of maximal sectional regularity is strictly stronger than having the maximal
regularity conjectured by Eisenbud and G\^oto.
\end{remark}

Let $X \subset \mathbb P^r_K$ be as above. According to Bertin \cite{Be1} a line $\mathbb L = \mathbb P^1_K
\subset \mathbb P^r_K$ is called an {\sl extremal secant line to } $X$ if $\lambda({\mathcal O}_{X \cap \mathbb L}) = d-r+3.$ Concerning the relation between the condition that $X$ is of maximal sectional regularity
and that $X$ has an extremal secant line we have the following result.

\begin{proposition} \label{5.5} Let $4 < r < d,$ let $\mathbb L \subset \mathbb P^r_K$ be a line and let
$\mathcal H$ be the linear system of all hyperplanes $\mathbb H \subset \mathbb P^r_K$ with $\mathbb L \subset
\mathbb H.$ Consider the following four statements:
\begin{itemize}
\item[(i)] For a generic $\mathbb H \in {\mathcal H}$ the curve ${\mathcal C} = X \cap \mathbb H \subset
\mathbb H$ is reduced, irreducible, of regularity $d-r+3$ and $\mathbb L$ is an extremal secant line to $\mathcal C.$
\item[(ii)] There is some $\mathbb H \in {\mathcal H}$ such that the curve ${\mathcal C} = X \cap \mathbb H \subset
\mathbb H$ satisfies the requirements of statement (i).
\item[(iii)] $\mathbb L$ is a secant line to $X$ with $\mathbb L \cap X \subseteq \CM(X).$
\item[(iv)] $\mathbb L \not\subseteq X, \mathbb L \cap X \subseteq \CM(X)$ and $\lambda ( {\mathcal O}_{X \cap \mathbb L})
\geq d-r+3.$
\end{itemize}
Then
\begin{itemize}
\item[(a)] We have the implications (i) $\Rightarrow$ (ii) $\Rightarrow$ (iii) $\Rightarrow$  (iv).
\item[(b)] If $\ch K = 0,$ then all of the four statements are equivalent.
\end{itemize}
\end{proposition}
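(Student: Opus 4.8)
The plan is to prove (a) as the chain $(i)\Rightarrow(ii)\Rightarrow(iii)\Rightarrow(iv)$, and then to deduce (b) by establishing the single implication $(iv)\Rightarrow(i)$, which closes the cycle. The device used throughout is that $\mathbb L\subseteq\mathbb H$ for every $\mathbb H\in\mathcal H$, so that scheme-theoretically $X\cap\mathbb L=(X\cap\mathbb H)\cap\mathbb L$; hence $\lambda(\mathcal O_{X\cap\mathbb L})$ is already computed on any section $\mathcal C=X\cap\mathbb H$ and is independent of the choice of $\mathbb H\in\mathcal H$.

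For part (a): $(i)\Rightarrow(ii)$ is immediate, since $K$ is infinite, so a dense open condition on $\mathcal H$ is satisfied by at least one member. For $(ii)\Rightarrow(iii)$ I would argue as follows. Take $\mathbb H$ as in (ii); then $\mathcal C=X\cap\mathbb H\subset\mathbb H=\mathbb P^{r-1}_K$ is reduced, irreducible and non-degenerate of degree $d$ with $\reg\mathcal C=d-r+3$, hence a curve of maximal regularity, so by \cite{GLP} it is smooth and rational, and $\lambda(\mathcal O_{X\cap\mathbb L})=\lambda(\mathcal O_{\mathcal C\cap\mathbb L})=d-r+3$ is finite and positive; in particular $\mathbb L\not\subseteq X$ and $\mathbb L$ is a secant line to $X$. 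To see $\mathbb L\cap X\subseteq\CM(X)$, fix $x\in\mathbb L\cap X=\mathcal C\cap\mathbb L$ and let $\ell\in S_1$ define $\mathbb H$. Since $X$ is an irreducible variety, $\mathcal O_{X,x}$ is a domain, and $\ell\neq0$ in it (as $X\not\subseteq\mathbb H$), so $\ell$ is $\mathcal O_{X,x}$-regular; as $\mathcal C$ is reduced of pure dimension one we have $\depth\mathcal O_{\mathcal C,x}=1$, whence $\depth\mathcal O_{X,x}=\depth(\mathcal O_{X,x}/\ell\,\mathcal O_{X,x})+1=\depth\mathcal O_{\mathcal C,x}+1=2=\dim\mathcal O_{X,x}$ and $x\in\CM(X)$. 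Finally $(iii)\Rightarrow(iv)$ is immediate: a secant line to $X$ is, by definition, not contained in $X$ and meets it in a scheme of length at least $d-r+3$, and the condition $\mathbb L\cap X\subseteq\CM(X)$ is already part of (iii).

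For part (b) assume $\ch K=0$; it remains to prove $(iv)\Rightarrow(i)$. The base locus of $\mathcal H$ on $X$ is $X\cap\mathbb L$, a finite scheme (since $\mathbb L\not\subseteq X$) contained in $\CM(X)$. By Bertini's theorems, available in characteristic zero, the generic $\mathbb H\in\mathcal H$ cuts out a curve $\mathcal C=X\cap\mathbb H$ that is reduced, irreducible and non-degenerate in $\mathbb H=\mathbb P^{r-1}_K$, of degree $d$: generic reducedness and irreducibility hold by Bertini, and the hypothesis $\mathbb L\cap X\subseteq\CM(X)$ is what rules out embedded components of $\mathcal C$ at the base points, since for $x\in\CM(X)$ and $\ell$ an $\mathcal O_{X,x}$-regular linear form the quotient $\mathcal O_{\mathcal C,x}=\mathcal O_{X,x}/\ell\,\mathcal O_{X,x}$ is Cohen--Macaulay of dimension one. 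Since $\mathbb L\subseteq\mathbb H$ we get $\lambda(\mathcal O_{\mathcal C\cap\mathbb L})=\lambda(\mathcal O_{X\cap\mathbb L})\geq d-r+3$; on the other hand, projecting the curve $\mathcal C$ from $\mathbb L$ (note $\mathbb L\not\subseteq\mathcal C$) onto $\mathbb P^{r-3}_K$ yields a non-degenerate curve, so $\lambda(\mathcal O_{\mathcal C\cap\mathbb L})\leq d-(r-1)+2=d-r+3$ (cf. \cite[Corollary 4.2]{Be2}); hence equality holds and $\mathbb L$ is an extremal secant line to $\mathcal C$. Moreover $\reg\mathcal C=d-r+3$: indeed $\reg\mathcal C\leq d-r+3$ by \cite{GLP}, while $\reg\mathcal C\geq d-r+3$ because $\mathcal C$ has the $(d-r+3)$-secant line $\mathbb L$ (equivalently, because $\sreg X=d-r+3$). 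Thus $\mathcal C$ meets all requirements of (i) for the generic $\mathbb H\in\mathcal H$, which is statement (i); combined with part (a), all four statements are equivalent.

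The main obstacle is the Bertini step in part (b): one must guarantee that for generic $\mathbb H\in\mathcal H$ the section $\mathcal C=X\cap\mathbb H$ is genuinely reduced and irreducible, even though every such $\mathbb H$ contains the fixed scheme $X\cap\mathbb L$ and $\mathbb L$ itself may meet $\Sing(X)$. This is precisely where the hypothesis $\mathbb L\cap X\subseteq\CM(X)$ enters (to exclude embedded or non-reduced components of $\mathcal C$ along the base points) and where the characteristic-zero assumption enters (to apply Bertini's irreducibility theorem, which can fail in positive characteristic); in characteristic $p$ only the one-directional chain (a) survives. A second, more routine point is the degree estimate for the projection of $\mathcal C$ from the multisecant line $\mathbb L$, which rests on the bound underlying \cite[Corollary 4.2]{Be2}.
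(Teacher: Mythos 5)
Your proposal is correct and follows essentially the same route as the paper: for (ii)$\Rightarrow$(iii) you use that the reduced irreducible section $\mathcal C$ is Cohen--Macaulay and locally cut out of $X$ by one regular equation to get $\mathbb L\cap X\subseteq\CM(X)$, and for (iv)$\Rightarrow$(i) you use Bertini in characteristic zero away from the finite base locus plus the Cohen--Macaulay hypothesis at the base points, and then the secant-line lower bound on regularity together with the \cite{GLP} upper bound. The only differences are cosmetic: your separate projection step giving $\lambda(\mathcal O_{\mathcal C\cap\mathbb L})\leq d-r+3$ is redundant (the paper extracts it from the chain $\reg\mathcal C\geq\lambda(\mathcal O_{\mathcal C\cap\mathbb L})\geq d-r+3\geq\reg\mathcal C$), and the parenthetical ``equivalently, because $\sreg X=d-r+3$'' is unjustified under hypothesis (iv), though it is not load-bearing.
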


\begin{proof} (a): It suffices to show that the implication "(ii) $\Rightarrow$ (iii)" holds. So, let
$\mathbb H \in {\mathcal H}$ be as in statement (ii). Then ${\mathcal C} = X \cap \mathbb H \subset \mathbb H$
is a reduced and irreducible curve with $\deg {\mathcal C} = d$ and $\reg {\mathcal C} =
\lambda ({\mathcal O}_{X \cap \mathbb L}) = d-r+3.$ As $X$ is a surface and $\mathcal C$ is Cohen-Macaulay
and locally cut out by one equation from $X,$ we must have $X \cap \mathbb H = {\mathcal C} \subseteq \CM(X),$
whence $X \cap \mathbb L \subseteq \CM(X).$ Moreover
\[
\lambda ({\mathcal O}_{X \cap \mathbb L}) = \lambda ({\mathcal O}_{(X \cap \mathbb H) \cap \mathbb L})
= \lambda ({\mathcal O}_{{\mathcal C} \cap \mathbb L}) = d-r+3.
\]

(b): It suffices to prove the implication "(iv) $\Rightarrow$ (i)". So let $\mathbb L$ be as in
statement (iv). As $\# (\mathbb L \cap X) < \infty$ and $\ch K = 0$ it follows by Bertini Theorems
that the curve ${\mathcal C} = X \cap \mathbb H \subset \mathbb H$ is reduced outside $\mathbb L \cap X$
and irreducible for generic $\mathbb H \in {\mathcal H}.$  As $\mathbb L \cap X \subset \CM(X)$ and $\mathcal C$
is locally cut out from $X$ by one equation, it follows that $\mathcal C$ is also reduced in all
points $x \in \mathbb L \cap X.$ So $\mathcal C$ is reduced and irreducible at all. Moreover $\reg {\mathcal C}
\geq \lambda ({\mathcal O}_{{\mathcal C} \cap \mathbb L}) = \lambda ({\mathcal O}_{X \cap \mathbb L}) \geq
d-r+3 = \deg {\mathcal C} -(r-1) +2 \geq \reg {\mathcal C}$ (cf. \cite{GLP}). This proves our claim.
\end{proof}

\begin{corollary} \label{5.6} Let $4 < r < d.$ If there is a hyperplane $\mathbb H \subset \mathbb P^r_K$
such that the curve ${\mathcal C} = X \cap \mathbb H \subset \mathbb H$ is reduced and irreducible and of
regularity $d-r+3,$ then $X$ admits an extremal secant line $\mathbb L \subset \mathbb P^r_K$ such that
$X \cap \mathbb L \subseteq \CM(X).$ If $\ch K = 0,$ the converse is true also.
\end{corollary}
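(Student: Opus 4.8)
The plan is to deduce the statement from Proposition \ref{5.5}, which packages all the relevant geometry; the only extra input needed is the classical fact — recalled in the Introduction for curves in $\mathbb P^r_K$ and used here for a curve in $\mathbb P^{r-1}_K$ — that a non-degenerate irreducible curve of maximal regularity carries an extremal secant line. Apart from that, the single step that is not pure bookkeeping is checking that the (possibly very special) hyperplane section ${\mathcal C} = X \cap \mathbb H$ is non-degenerate in $\mathbb H$, which is what licenses that appeal.

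\emph{Forward implication.} Suppose $\mathbb H = \mathbb P^{r-1}_K$ is a hyperplane with ${\mathcal C} := X \cap \mathbb H$ reduced, irreducible and $\reg {\mathcal C} = d-r+3$. First I would note that ${\mathcal C}$ is non-degenerate in $\mathbb H$: since $X$ is integral and $X \not\subseteq \mathbb H$, the divisor ${\mathcal C}$ is cut out on $X$ by a non-zero-divisor section $\sigma_0$ of ${\mathcal O}_X(1)$, and taking global sections in $0 \to {\mathcal O}_X \stackrel{\sigma_0}{\to} {\mathcal O}_X(1) \to {\mathcal O}_{\mathcal C}(1) \to 0$, together with $H^0(X, {\mathcal O}_X) = K$ and the non-degeneracy of $X$, shows that no linear form on $\mathbb H$ vanishes on ${\mathcal C}$. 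Hence ${\mathcal C} \subset \mathbb H = \mathbb P^{r-1}_K$ is a non-degenerate irreducible curve with $\reg {\mathcal C} = d-(r-1)+2$, i.e. of maximal regularity, and — the hypothesis $r < d$ placing us in the relevant range — it carries a line $\mathbb L \subset \mathbb H$ with $\lambda({\mathcal O}_{{\mathcal C}\cap\mathbb L}) = d-r+3$, that is, an extremal secant line to ${\mathcal C}$. Since $\mathbb H$ belongs to the pencil ${\mathcal H}$ of hyperplanes through $\mathbb L$, the pair $(\mathbb L,\mathbb H)$ realizes statement (ii) of Proposition \ref{5.5}; so by part (a) of that proposition statement (iii) holds, namely $\mathbb L$ is a secant line to $X$ with $\mathbb L \cap X \subseteq \CM(X)$. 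Finally $\mathbb L \subset \mathbb H$ gives $X \cap \mathbb L = {\mathcal C}\cap\mathbb L$ scheme-theoretically, whence $\lambda({\mathcal O}_{X\cap\mathbb L}) = d-r+3$ and $\mathbb L$ is an extremal secant line to $X$ in Bertin's sense. This is the asserted conclusion.

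\emph{Converse ($\ch K = 0$).} Here one is given an extremal secant line $\mathbb L$ to $X$ with $\mathbb L \cap X \subseteq \CM(X)$. By Bertin's definition $\lambda({\mathcal O}_{X\cap\mathbb L}) = d-r+3$ is finite, so $\mathbb L \not\subseteq X$, and thus $\mathbb L$ satisfies statement (iv) of Proposition \ref{5.5}. As $\ch K = 0$, part (b) of that proposition gives statement (i): for a generic hyperplane $\mathbb H$ in the pencil ${\mathcal H}$ through $\mathbb L$ the curve ${\mathcal C} = X \cap \mathbb H$ is reduced, irreducible and of regularity $d-r+3$. Since ${\mathcal H}$ is a non-empty pencil such a hyperplane exists, which is what was wanted. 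The only delicate ingredient in the whole argument is the non-degeneracy step flagged above; everything else is routine once Proposition \ref{5.5} is available.
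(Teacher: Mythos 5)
Your proof is correct and follows essentially the same route as the paper: apply Proposition \ref{5.5} (using (ii) $\Rightarrow$ (iii) together with $X \cap \mathbb L = {\mathcal C} \cap \mathbb L$ for the forward direction, and (iv) $\Rightarrow$ (i) in characteristic $0$ for the converse), combined with the GLP fact that a curve of maximal regularity in $\mathbb P^{r-1}_K$ has an extremal secant line. Your explicit verification that ${\mathcal C} = X \cap \mathbb H$ is non-degenerate in $\mathbb H$ is a correct filling-in of a point the paper leaves implicit in its "clear from" proof.
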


\begin{proof} Clear from Proposition \ref{5.5} and the fact that a curve of maximal regularity of degree $d$ in
$\mathbb P^{r-1}_K$ has an extremal secant line (cf. \cite{GLP}).
\end{proof}

\begin{corollary} \label{5.7} Let $4 < r < d.$ If $X$ is of maximal sectional regularity, it admits
an extremal secant line $\mathbb L$ such that $X \cap \mathbb L \subseteq \CM(X).$
\end{corollary}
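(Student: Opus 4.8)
The plan is to derive this as an immediate consequence of Corollary \ref{5.6} together with the very definition of "maximal sectional regularity." First I would recall that by Definition \ref{5.3}, if $X \subset \mathbb P^r_K$ is of maximal sectional regularity then $\sreg X = d-r+3$, and by Remark \ref{5.2} (A) the generic hyperplane section $\mathcal C = X \cap \mathbb H \subset \mathbb H = \mathbb P^{r-1}_K$ satisfies $\reg {\mathcal C} = \sreg X = d-r+3$; moreover the set of hyperplanes $\mathbb H$ achieving this value is dense and open. Since the generic hyperplane section of an irreducible reduced non-degenerate surface is again reduced and irreducible (Bertini, as used throughout Section 4 and recorded implicitly in Remark \ref{5.2}), one may pick a single hyperplane $\mathbb H \subset \mathbb P^r_K$ for which ${\mathcal C} = X \cap \mathbb H$ is reduced, irreducible and of regularity $d-r+3$.

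Once such a hyperplane is in hand, the hypotheses of Corollary \ref{5.6} are satisfied verbatim — namely "there is a hyperplane $\mathbb H \subset \mathbb P^r_K$ such that ${\mathcal C} = X \cap \mathbb H$ is reduced and irreducible and of regularity $d-r+3$" — so that Corollary \ref{5.6} yields an extremal secant line $\mathbb L \subset \mathbb P^r_K$ to $X$ with $X \cap \mathbb L \subseteq \CM(X)$. That is exactly the assertion of Corollary \ref{5.7}, so the proof is complete.

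There is essentially no obstacle here: the content has already been packaged into Proposition \ref{5.5} and Corollary \ref{5.6}, and Corollary \ref{5.7} is the specialization to the case where the good hyperplane section is the generic one. The only point requiring a word of care is the passage from "$\sreg X = d-r+3$" to "some honest hyperplane $\mathbb H$ gives a reduced irreducible curve of regularity $d-r+3$", but this is immediate from the density/openness statement in Remark \ref{5.2} (A) intersected with the (open, dense) locus of hyperplanes for which $X \cap \mathbb H$ is reduced and irreducible. I would therefore write the proof in one or two sentences: choose a generic hyperplane section realizing $\sreg X = d-r+3$, which is reduced and irreducible, and apply Corollary \ref{5.6}.
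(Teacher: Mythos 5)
Your proposal is correct and follows exactly the paper's route: the paper's own proof is simply ``Clear from Corollary \ref{5.6}'', and you have filled in the same (routine) step of passing from $\sreg X = d-r+3$ to an actual reduced irreducible hyperplane section of regularity $d-r+3$ via Definition \ref{5.3} and Remark \ref{5.2}, then invoking Corollary \ref{5.6}.
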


\begin{proof} Clear from Corollary \ref{5.6}.
\end{proof}

So, in this section, we are actually interested in surfaces $X$ having not only one extremal secant line,
but "many of them". Moreover we wish not to restrict the type of singularities of $X$ nor the characteristic
of the base field $K.$ We thus cannot make use of the classification given in \cite{Be1} and \cite{Be2}.

The technical key result of the present section is Proposition \ref{5.9} below. The natural aim
to prove statement (a) of this Proposition would be to apply the Socle Lemma of Huneke and Ulrich
(cf. \cite[Corollary (3.11) (i)]{HU}). But this would mean that we had to assume that the base
field $K$ is of characteristic $0.$ As we prefer a characteristic free approach, we shall
attack the proof of Proposition \ref{5.9} in a slightly different way. We namely first prove
the following lemma, which follows easily from a result of Kreuzer (cf. \cite{mK}).

For a graded $S$-module $M$ let
\[
\soc M := 0 :_M S_+ = \Hom_S( K, M)
\]
denote the {\sl socle} of $M.$

\begin{lemma} \label{5.8} Let $\ell \in S_1$ be a generic linear form and let $T$ be a finitely generated graded $S$-module. Then for each integer $n < \beg \soc (T/\ell T)$
we have $(0 :_T \ell)_n \subset \ell T_{n-1}.$
\end{lemma}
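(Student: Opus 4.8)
The statement concerns the graded module $T/\ell T$ for a generic linear form $\ell$, and the claim is that in degrees strictly below the bottom of the socle of $T/\ell T$, the kernel $(0:_T\ell)$ is contained in $\ell T$. The natural route is to translate everything into the short exact sequence
\[
0 \to (0:_T\ell) \to T(-1) \xrightarrow{\ \ell\ } T \to T/\ell T \to 0,
\]
split into $0 \to (0:_T\ell)(-1)' \to T(-1)\to \ell T \to 0$ and $0\to \ell T \to T \to T/\ell T\to 0$, and then to use that $\ell T_{n-1}$ is precisely the image of multiplication by $\ell$; so the assertion $(0:_T\ell)_n \subset \ell T_{n-1}$ is really a statement comparing the kernel of $\ell$ in degree $n$ with the image of $\ell$ in degree $n-1$. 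I would phrase it as: every element $t\in T_n$ with $\ell t = 0$ is of the form $\ell t'$ with $t'\in T_{n-1}$.

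**Key steps.** First I would invoke the cited result of Kreuzer (\cite{mK}) — the lemma says the proof "follows easily" from it. Kreuzer's result (on the behaviour of generic linear forms and the Cayley–Bacharach-type property, or more precisely on the structure of $0:_T\ell$ for generic $\ell$) should give, for generic $\ell$, a comparison between the annihilator submodule $0:_T\ell$ and the socle of $T/\ell T$: concretely, multiplication by a second generic linear form, or the graded structure, forces $(0:_T\ell)$ to "sit above" the socle of $T/\ell T$ in the sense that its generators in low degrees must already be divisible by $\ell$. Second, I would argue by descending induction or by a direct degree count: an element $t\in(0:_T\ell)_n$ that is \emph{not} in $\ell T_{n-1}$ would produce a nonzero element in $(T/\ell T)$ annihilated by $S_+$ — i.e. a socle element — in degree $\le n$, because $\ell t=0$ means $t$'s image in $T/\ell T$ is $S_1$-torsion after one more generic linear form by Kreuzer; pushing this through gives $\beg\soc(T/\ell T)\le n$, contradicting $n<\beg\soc(T/\ell T)$. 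Third, I would check that genericity of $\ell$ is used exactly where Kreuzer's hypothesis needs it (avoiding associated primes of $T$ other than $S_+$, so that $\ell$ is a nonzerodivisor on $T/H^0(T)$ and the relevant Artinian reduction behaves well).

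**Main obstacle.** The delicate point is making precise the link "$t\in(0:_T\ell)_n\setminus\ell T_{n-1}$ $\Rightarrow$ a socle element of $T/\ell T$ in degree $\le n$." Multiplication by $\ell$ killing $t$ does not by itself say the class of $t$ in $T/\ell T$ is in the socle — that would need $S_+ t\subseteq \ell T$, not just $\ell t=0$. So one has to use Kreuzer's result genuinely: for a \emph{generic} $\ell$ together with a generic complementary linear form, the module $0:_T\ell$ is controlled, and the failure of $t\in\ell T$ forces a socle generator further down. I expect the bulk of the work is unwinding exactly which formulation of Kreuzer's theorem is being used and matching its hypotheses (generic $\ell$, finite generation of $T$) to the present setup; once that is pinned down, the degree bookkeeping via the two short exact sequences above is routine. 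An alternative, if the Kreuzer reference is only about Hilbert functions, would be to run the Socle Lemma of Huneke–Ulrich (\cite[(3.11)(i)]{HU}) in characteristic $0$ and then reduce the general case by a standard lifting/specialization argument — but the authors explicitly say they want to avoid the characteristic hypothesis, so I would stay with the Kreuzer route.
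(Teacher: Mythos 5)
Your proposal takes essentially the same route as the paper: the paper's proof takes $t\in(0:_T\ell)_n$, applies Kreuzer's Corollary (1.2)(b) (for generic $\ell$ over the infinite field $K$) to get $\ell' t\in\ell T$ for \emph{all} $\ell'\in S_1$, so the class of $t$ lies in $\soc(T/\ell T)_n$, which vanishes since $n<\beg\soc(T/\ell T)$, whence $t\in\ell T$ and hence $t\in\ell T_{n-1}$ for degree reasons. The ``delicate point'' you single out is exactly what the cited result of Kreuzer supplies, so your sketch matches the paper's argument.
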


\begin{proof} Let $n < \beg \soc (T/\ell T)$ and $t \in
(0 :_T \ell)_n.$ Then $\ell t = 0.$ As $\ell \in S_1$ is generic and $K$ is infinite, we may
apply \cite[Corollary (1.2) (b)]{mK} to the finitely generated graded $S$-module $T$ and get that $\ell' t \in \ell T = 0$ for all $\ell' \in S_1.$ Therefore $t + \ell T \in (\soc
(T/\ell T))_n = 0,$ thus $t \in \ell T.$
\end{proof}

\begin{proposition} \label{5.9} Assume that $4 < r < d$ and the surface $X$ is of maximal sectional regularity. Let $\ell \in S_1$ be a generic linear form and set $U := (0:_{H^1(A)} \ell )(-1).$ Then:
\begin{itemize}
\item[(a)] If $d \leq 2r-4,$ then $U$ is minimally generated by at most $(d-1)\binom{r-1}{2} +r-2$ forms of degree 3 and at most one form of degree $d-r+3.$
\item[(b)] If $d \leq 2r-5,$ then $U$ is generated by at most one single form of degree $d-r+3.$
\end{itemize}
\end{proposition}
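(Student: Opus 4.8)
\textbf{Proof plan for Proposition \ref{5.9}.}

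The plan is to relate the module $U = (0:_{H^1(A)}\ell)(-1)$ to the syzygetic data of the union $\mathcal{C}\cup\mathbb{L}$ furnished by Theorem \ref{3.2} (via Corollary \ref{5.7}, $X$ has an extremal secant line $\mathbb{L}$ with $X\cap\mathbb{L}\subseteq\CM(X)$, and for generic hyperplane $\mathbb H\supset\mathbb L$ the section $\mathcal C = X\cap\mathbb H$ is a curve of maximal regularity of degree $d$ with extremal secant line $\mathbb L$). First I would use Lemma \ref{5.8} to control the socle: one passes from $X$ to a generic hyperplane section ${\mathcal C} = \Proj(A/\ell A) \subset \mathbb P^{r-1}_K$ and notes that the graded pieces of $0:_{H^1(A)}\ell$ in degrees below $\beg\soc(\text{appropriate module over }A/\ell A)$ are forced into $\ell\cdot H^1(A)$, hence vanish in the relevant low degrees once we identify $H^1$ of the section with the Hartshorne–Rao module of $\mathcal C$. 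Concretely, $H^1(A/\ell A)$ is built from $H^1(A)$ and $H^2(A)$ by the long exact cohomology sequence of $0\to A(-1)\xrightarrow{\ell}A\to A/\ell A\to 0$, and since $\mathcal C$ is smooth rational (Remark \ref{5.4} (A)) the Hartshorne–Rao module of $\mathcal C$ is the cokernel $H^1(A)/\ell H^1(A)$ in the relevant range. This should pin down the degrees in which $U$ can have socle, and hence generators, to $3$ and $d-r+3$.

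Next I would read off the actual numerical bounds from Theorem \ref{3.2} applied to the curve $\mathcal C\subset\mathbb P^{r-1}_K$ (replacing $r$ by $r-1$ there): the minimal generators of the vanishing ideal of $\mathcal C\cup\mathbb L$ in $\mathbb P^{r-1}_K$ sit in degrees $2$, possibly $3$, and $d-(r-1)+2 = d-r+3$, with the number of cubic generators bounded by $c_2 = (d-1)\binom{r-1}{2}-\binom{r-1}{3}$ in the notation of Theorem \ref{3.2} (now with $r$ replaced by $r-1$ this becomes the stated $(d-1)\binom{r-1}{2}+r-2$-type expression after using the $N_p$-estimate). The point is that by \cite[Proposition 4.1]{BS2}-style arguments the Hartshorne–Rao module of $\mathcal C$ is a direct summand complement of the Cohen–Macaulay ring $S'/J'$, and its minimal free presentation over $S' = S/\ell S$ has generators exactly in the degrees dictated by the Betti table in Theorem \ref{3.2}; transporting this back to $U$ via the regular element $\ell$ gives the claimed generator count. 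For part (b), when $d\le 2r-5$ we have $2(r-1)-d-2 \ge 1$, so Theorem \ref{3.2} forces $v_i = 0$ for $i\le 2(r-1)-d-2$ and $u_i = c_i - a_i$ in the corresponding low range; the upshot is that no cubic generators survive and only the single top-degree generator in degree $d-r+3$ remains. Thus $U$ is generated by at most one form of degree $d-r+3$.

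The main obstacle I anticipate is the bookkeeping that links the socle of a quotient over $A/\ell A$ (where Lemma \ref{5.8} operates) to the generators of $U$ as a module over $S$, together with keeping the index shifts straight when specializing Theorem \ref{3.2} from $\mathbb P^r_K$ to $\mathbb P^{r-1}_K$ and then lifting along $\ell$. In particular one must verify that the ``extra'' generator of degree $d-r+3$ appearing for $\mathcal C\cup\mathbb L$ really does correspond to the unique potential top-degree socle element of $U$ and not to something in $\ell\cdot H^1(A)$; this is where the genericity of $\ell$ and the Cohen–Macaulayness of the cone over $\mathcal C\cup\mathbb L$ (from \cite[Proposition 3.5]{BS2}, valid since $d\le 2(r-1)-1$ here) are essential, because they guarantee that $\ell$ is a nonzerodivisor on the relevant modules and that no unexpected socle is created by the specialization. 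Once that identification is secure, the degree-$3$ bound in (a) and the single-generator statement in (b) follow directly from the Betti numbers $u_2, v_2$ (and the vanishing $v_i = 0$ for small $i$) established in Theorem \ref{3.2}.
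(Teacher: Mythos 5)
Your skeleton is the paper's: take the generic section ${\mathcal C}=\Proj(A/\ell A)\subset\mathbb P^{r-1}_K$, which is a curve of maximal regularity of degree $d$, feed it into Corollary \ref{3.3} with $r-1$ in place of $r$ to get the generator counts, and use Lemma \ref{5.8} to kill the low degrees. But the step that transports the Betti data to $U$ is not correctly in place, and as you state it it would fail. Socle degrees of $U$ do not bound its generator degrees, and the Hartshorne--Rao module $H^1(A_{\mathcal C})$ is not ``a direct summand complement of $S'/J'$'' that maps onto $U$; neither of these produces generators of $U$. The mechanism the paper uses (and which you need) is the identification $U\simeq H^0(A/\ell A)\simeq (I+\ell S)^{\sat}/(I+\ell S)$, coming from the cohomology sequence of $0\to A(-1)\stackrel{\ell}{\to}A\to A/\ell A\to 0$: this exhibits $U$ as a quotient of the vanishing ideal $I_{\mathcal C}=(I+\ell S)^{\sat}/\ell S$ of ${\mathcal C}$ in $S/\ell S$, so the minimal generators of $I_{\mathcal C}$ (quadrics, at most $(d-1)\binom{r-1}{2}+r-2$ cubics, one form of degree $d-r+3$, and no cubics when $d\le 2r-5$, by Corollary \ref{3.3}) surject onto a generating set of $U$. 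Note also that the degree-$(d-r+3)$ generator belongs to $I_{\mathcal C}$ itself, not to the ideal of ${\mathcal C}\cup\mathbb L$, which is generated in degrees $2$ and $3$ only; your attribution of it to ${\mathcal C}\cup\mathbb L$ would leave that generator unaccounted for.

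The second half of the argument also needs sharpening. The only job of Lemma \ref{5.8} is to prove $U_2=(0:_{H^1(A)}\ell)_1=0$, so that the quadric generators of $I_{\mathcal C}$ die in $U$ and $\beg U\ge 3$; it says nothing about degrees $3$ and $d-r+3$. To apply it you must supply the input $\beg\soc\bigl(H^1(A)/\ell H^1(A)\bigr)\ge 2$, which your sketch leaves unspecified: it comes from $\soc H^1(A_{\mathcal C})\simeq K(r-d-1)$ for the maximal-regularity curve ${\mathcal C}$ (apply \cite[Proposition 3.5 and Theorem 3.3]{BS2}), together with $H^1(A)/\ell H^1(A)\hookrightarrow H^1(A/\ell A)\simeq H^1(A_{\mathcal C})$ and $H^1(A)_0=0$. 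Finally, $\ell$ must be generic in all of $S_1$ --- both Lemma \ref{5.8} (Kreuzer) and the fact that $\Proj(A/\ell A)$ is the generic hyperplane section of maximal regularity require this --- so you cannot work, as your parenthetical via Corollary \ref{5.7} suggests, with hyperplanes through a fixed extremal secant line $\mathbb L$ of $X$; the extremal secant line relevant to ${\mathcal C}$ is one inside the generic hyperplane, furnished by \cite{GLP}, and it plays no explicit role in the proof.
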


\begin{proof} Let $\mathbb H := \Proj S/\ell S = \mathbb P^{r-1}_K,$ so that \[
{\mathcal C} := X \cap \mathbb H = \Proj A/\ell A \subset \mathbb H = \mathbb P^{r-1}_K
\]
is a curve of maximal regularity and degree $d$ in $\mathbb P^{r-1}_K.$ If we apply cohomology to the short
exact sequence
\[
0 \to A(-1) \stackrel{\ell}{\longrightarrow} A \to A/\ell A \to 0
\]
we get an isomorphism of graded $S$-modules $H^0(A/\ell A) \simeq U.$ Now, let
\[
I_{\mathcal C} := (I +\ell S)^{\sat}/\ell S \subset S/\ell S
\]
be the homogeneous vanishing ideal of $\mathcal C.$ Then the isomorphisms
\[
I_{\mathcal C}/((I +\ell S)/\ell S) \simeq (I+\ell S)^{\sat}/(I + \ell S) = H^0(S/(I + \ell S)) \simeq H^0(A/\ell A)
\]
show that there is an epimorphism of graded $S$-modules $I_{\mathcal C} \twoheadrightarrow U.$ As
\[
\deg {\mathcal C} = d \leq 2r - 4 = 2(r-1) -2
\]
we get from Corollary \ref{3.3} that $I_{\mathcal C} \subset S/\ell S$ is minimally generated by
$\binom{r}{2} -d-1$ quadrics, at most $(d-1) \binom{r-1}{2} +r-2$ cubics and one form of degree
$d-r+3.$ Moreover the cubics are not needed if
$\deg {\mathcal C} = d \leq 2(r-1) -3 = 2r-5.$ In view of the previous epimorphism
$I_{\mathcal C} \twoheadrightarrow U$ it therefore remains to show that $\beg U \geq 3$ or --
equivalently -- that $\beg (0:_{H^1(A)} \ell) \geq 2.$
Observe that $H^1(A/\ell A) \simeq H^1(A_{\mathcal C}),$ where
$A_{\mathcal C} = (S/\ell S)/I_{\mathcal C} \simeq (A/\ell A)/H^0(A/\ell A)$ is the homogeneous
coordinate ring of ${\mathcal C} \subset \mathbb P^{r-1}_K.$ Now, applying \cite[Proposition 3.5 and Theorem 3.3]{BS2}
to $\mathcal C,$ we get $\soc H^1(A_{\mathcal C}) \simeq K(r-d-1)$ and hence $\soc H^1(A/\ell A) \simeq K(r-d-1).$
Thus $\soc H^1(A)/\ell H^1(A) \simeq
K(r-d-1),$ so that $\beg (\soc H^1(A)/\ell H^1(A)) = d-r+1 \geq 2.$

Observe that $\beg H^1(A) \geq 1.$ Assume now, that $\beg U \leq 2.$ Then, there is an element
$u \in U_2 \setminus \{0\} = (0:_{H^1(A)} \ell)_1 \setminus \{0\} = H^1(A)_1 \setminus \{0\}.$
According to Lemma \ref{5.8} we get $u \in \ell H^1(A)_0 = \ell \cdot 0 = 0,$ a contradiction.
\end{proof}

In the next result we use the invariant $\delta (X) $ introduced in Notation \ref{4.4}.

\begin{theorem} \label{5.10} Assume that $4 < r < d$ and the surface $X \subset \mathbb P^r_K$ is of maximal sectional regularity, of arithmetic depth one and satisfies $\delta (X) \leq d-r+1.$ Then $d > 2r-5.$
\end{theorem}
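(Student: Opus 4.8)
The plan is to argue by contradiction: assume $d \leq 2r-5$ and derive a contradiction with the hypothesis $\delta(X) \leq d-r+1$. The key object to exploit is the module $U = (0 :_{H^1(A)} \ell)(-1)$ for a generic linear form $\ell$, which by Proposition \ref{5.9} (b) is generated by at most a single form of degree $d-r+3$ when $d \leq 2r-5$. First I would translate this generation statement into a statement about the graded pieces of $0 :_{H^1(A)} \ell$: it is concentrated in degrees $\geq d-r+2$, and in fact is a cyclic module whose generator sits in degree $d-r+2$. Combining this with the fact that $X$ has arithmetic depth one — so $H^1(A) \neq 0$ — I would study the exact cohomology sequence coming from $0 \to A(-1) \xrightarrow{\ell} A \to A/\ell A \to 0$, in particular the piece $H^1(A)_{n-1} \xrightarrow{\ell} H^1(A)_n \to H^1(A/\ell A)_n$.

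The heart of the argument is to show that the descent of the Hartshorne-Rao function $n \mapsto h^1_A(n)$ cannot be "too early". The point is that multiplication by $\ell$ gives a map $H^1(A)_{n-1} \to H^1(A)_n$ whose kernel is $(0:_{H^1(A)}\ell)_{n-1}$, which by the above vanishes for $n-1 < d-r+2$, i.e. is injective for $n \leq d-r+2$. Hence $h^1_A(n-1) \leq h^1_A(n)$ for all $n \leq d-r+2$, so the function $h^1_A$ is non-decreasing up through degree $d-r+2$. On the other hand $\delta(X) \leq d-r+1$ means that for all $n > d-r+1$, i.e. already at $n = d-r+2$, we have $h^1_A(n) \leq \max\{h^1_A(n-1)-1, 0\}$. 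Applying this at $n = d-r+2$: either $h^1_A(d-r+2) = 0$, or $h^1_A(d-r+2) \leq h^1_A(d-r+1) - 1$. The second option contradicts monotonicity unless $h^1_A(d-r+1) > h^1_A(d-r+2)$, which is exactly what monotonicity forbids; so we must have $h^1_A(d-r+2) = 0$, and then by the descent property $h^1_A(n) = 0$ for all $n \geq d-r+2$. I would then need to show $H^1(A) = 0$ altogether — using that $H^1(A)$ is nonzero (arithmetic depth one) forces $\beg H^1(A) \geq 1$, and the surjection $I_{\mathcal C} \twoheadrightarrow U$ together with the socle computation $\soc H^1(A)/\ell H^1(A) \simeq K(r-d-1)$ (established inside the proof of Proposition \ref{5.9}) pins $\e(\soc H^1(A)/\ell H^1(A)) = d-r+1$. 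Since a nonzero finitely generated graded module has nonzero socle modulo a generic linear form precisely in its top degree, $d-r+1$ is forced to lie in the support of $H^1(A)/\ell H^1(A)$, hence $h^1_A(d-r+1) \neq 0$; but the monotonicity from the previous paragraph then gives $h^1_A(d-r+2) \geq h^1_A(d-r+1) > 0$, directly contradicting $h^1_A(d-r+2)=0$.

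I expect the main obstacle to be the bookkeeping around the two competing constraints at the single critical degree $n = d-r+2$: on one side the monotonicity forced by $\beg U \geq d-r+3$ (Proposition \ref{5.9} (b)), on the other the forced strict descent coming from $\delta(X) \leq d-r+1$, and simultaneously keeping track of where $\soc(H^1(A)/\ell H^1(A))$ — equivalently the surjective image of $I_{\mathcal C}$ — actually lives. One must be careful that "arithmetic depth one" is genuinely used: it guarantees $H^1(A) \neq 0$, which is what makes the degree $d-r+1$ component nonvanishing and thus gives the contradiction its teeth; without it the statement would be vacuous. A secondary technical point is to make sure the generic $\ell$ can be chosen to simultaneously satisfy the conclusions of Corollary \ref{2.6}, Proposition \ref{5.9}, and Lemma \ref{5.8}, which is fine since each holds on a dense open subset of $S_1$.
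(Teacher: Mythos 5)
Your proposal is correct and follows essentially the same route as the paper: Proposition \ref{5.9} (b) gives $\beg U \geq d-r+3$, hence multiplication by $\ell$ is injective on $H^1(A)$ through degree $d-r+2$, and this monotonicity is played off against the descent forced by $\delta(X) \leq d-r+1$ at the critical degree $n = d-r+2$. The only divergence is the ending: the paper simply observes that the two conditions together force $H^1(A) = 0$, contradicting arithmetic depth one, whereas you detour through the socle identification $\soc\bigl(H^1(A)/\ell H^1(A)\bigr) \simeq K(r-d-1)$ from the proof of Proposition \ref{5.9} to get $h^1_A(d-r+1) \neq 0$ — a valid (via graded Nakayama, since depth one gives $H^1(A) \neq 0$) but unnecessary extra step, and your phrase about the socle sitting ``precisely in the top degree'' is not needed once that identification is invoked.
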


\begin{proof} Assume that all the hypotheses are satisfied and that $d \leq 2r-5.$ Let $\ell \in S_1$ and $U$ be as in Proposition  \ref{5.9}. Then statement (b) of this Proposition yields that $U$ is generated by a form of degree $d-r+3,$ or vanishes. We consider the exact sequences
\[
0 \to U_n \to H^1(A)_{n-1} \stackrel{\ell}{\to} H^1(A)_n
\]
for all $n \in \mathbb Z.$

As $U_n = 0$ for all $n \leq d-r+2$ we have $h^1_A(n) \geq h^1_A(n-1)$ for all of these $n.$ As $\delta (X) \leq d-r+1$ we also have $h^1_A(n) \leq \max \{h^1_A(n-1) - 1, 0\}$ for all $n \geq d-r+2.$ Both statements together yield $H^1(A) = 0,$ and this contradicts the hypothesis that $X$ is of arithmetic depth one.
\end{proof}

\begin{corollary} \label{5.11} Let $4 < r < d$ and assume that the surface $X \subset \mathbb P^r_K$ is of maximal sectional regularity and of arithmetic depth one. Assume that $h^1_A(1) + h^2_A(0) \leq d-r+1$ or
$\e (H^1(A)) \leq d-r+2.$ Then $d > 2r -5.$
\end{corollary}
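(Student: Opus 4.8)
The plan is to deduce Corollary \ref{5.11} directly from Theorem \ref{5.10}, so the only work is to check that each of the two alternative hypotheses forces the inequality $\delta(X) \leq d-r+1$ that Theorem \ref{5.10} requires. Under either hypothesis the remaining conclusions — that $X$ is of maximal sectional regularity and of arithmetic depth one, and that $4 < r < d$ — are carried over verbatim, so once $\delta(X) \leq d-r+1$ is established the conclusion $d > 2r-5$ is immediate.

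First I would treat the hypothesis $h^1_A(1) + h^2_A(0) \leq d-r+1$. Here I want to invoke Corollary \ref{4.6}, which (for $4 < d < 2r-4$) gives $\delta(X) \leq \min\{d-r+2,\, h^1_A(1)+h^2_A(0)\}$; in particular $\delta(X) \leq h^1_A(1)+h^2_A(0) \leq d-r+1$, as desired. One point to watch: Corollary \ref{4.6} is stated for $d < 2r-4$, whereas the statement of Theorem \ref{5.10}/Corollary \ref{5.11} needs a conclusion about the range $d \leq 2r-5$; but $d \leq 2r-5$ is exactly $d < 2r-4$, so inside the proof (where we argue by contradiction assuming $d \leq 2r-5$, mirroring the structure of Theorem \ref{5.10}) this hypothesis is met. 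Strictly one should phrase the argument the same way: assume $d \leq 2r-5$; then Corollary \ref{4.6} applies and gives $\delta(X) \leq d-r+1$; now Theorem \ref{5.10} yields $d > 2r-5$, a contradiction.

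For the second hypothesis $\e(H^1(A)) \leq d-r+2$, I would argue that $\delta(X) \leq \e(H^1(A))$ always. Indeed, for $n > \e(H^1(A))$ one has $h^1_A(n) = 0 = \max\{h^1_A(n-1)-1,0\}$ trivially, so by the very definition of $\delta(X)$ in Notation \ref{4.4} (A) we get $\delta(X) \leq \e(H^1(A))$. Hence $\delta(X) \leq d-r+2$ — but this is one short of what Theorem \ref{5.10} wants. To bridge the gap I would use the exact sequence $0 \to U_n \to H^1(A)_{n-1} \xrightarrow{\ell} H^1(A)_n$ together with Proposition \ref{5.9}(b): when $d \leq 2r-5$ the module $U$ is concentrated in degrees $\geq d-r+3$, so $U_{d-r+2} = 0$ and multiplication by $\ell$ is injective on $H^1(A)_{d-r+1}$. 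If $\e(H^1(A)) = d-r+2$ exactly, then $H^1(A)_{d-r+2} \neq 0$; but $H^1(A)_{d-r+2} = \ell H^1(A)_{d-r+1}$ modulo the cokernel considerations of Theorem \ref{5.10}'s argument, which forces a growth pattern contradicting $\delta(X) \leq d-r+2$ unless $H^1(A) = 0$ — contradicting arithmetic depth one. So in fact $\e(H^1(A)) \leq d-r+2$ already implies $\delta(X) \leq d-r+1$ when $d \leq 2r-5$, and Theorem \ref{5.10} again gives the contradiction.

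The main obstacle is the second case: the naive bound $\delta(X) \leq \e(H^1(A)) \leq d-r+2$ falls exactly one unit short of the hypothesis needed for Theorem \ref{5.10}, and closing that gap requires re-running the exact-sequence analysis from the proof of Theorem \ref{5.10} with $\e(H^1(A)) = d-r+2$ in hand. Concretely, I expect the cleanest route is to prove the corollary by contradiction: assume $d \leq 2r-5$; in the first case apply Corollary \ref{4.6} to get $\delta(X) \leq d-r+1$ and quote Theorem \ref{5.10}; in the second case observe that $U_n = 0$ for $n \leq d-r+2$ gives $h^1_A(n) \geq h^1_A(n-1)$ for those $n$, so $\e(H^1(A)) \leq d-r+2$ combined with $\beg H^1(A) \geq 1$ and the descent property past $\delta(X)$ forces $H^1(A) = 0$ directly, contradicting arithmetic depth one. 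Either way the numerical bookkeeping is short, and no new geometry beyond Proposition \ref{5.9} and Corollary \ref{4.6} is needed.
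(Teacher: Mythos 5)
Your handling of the first alternative is correct and is exactly the paper's route: the paper disposes of Corollary \ref{5.11} by citing Lemma \ref{4.5} (b), Corollary \ref{4.6} and Theorem \ref{5.10}, and your version (assume $d\leq 2r-5$, note that then the degree restriction of Corollary \ref{4.6} is met, conclude $\delta(X)\leq h^1_A(1)+h^2_A(0)\leq d-r+1$, and quote Theorem \ref{5.10}) is a careful writing-out of that chain.

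The second alternative is where your proposal has a genuine gap. You correctly observe that Lemma \ref{4.5} (b) only yields $\delta(X)\leq \e (H^1(A))\leq d-r+2$, one unit short of the hypothesis $\delta(X)\leq d-r+1$ of Theorem \ref{5.10} (the paper's one-line citation silently elides this point, so the observation is sharp), but your attempt to close the gap is not a proof. The facts you invoke under the assumption $d\leq 2r-5$ --- namely $U_n=0$ for $n\leq d-r+2$ by Proposition \ref{5.9} (b), hence $h^1_A(n-1)\leq h^1_A(n)$ in that range, together with $\beg H^1(A)\geq 1$, $h^1_A(n)=0$ for $n>d-r+2$, and ``the descent property past $\delta(X)$'' --- do not force $H^1(A)=0$. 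Since all you know is $\delta(X)\leq d-r+2$, the descent inequality is only guaranteed for $n>d-r+2$, where $h^1_A$ vanishes anyway, so it carries no information; treating it as if it held already at $n=d-r+2$ is circular, because that is precisely the inequality one would need to get $\delta(X)\leq d-r+1$. Concretely, the profile $h^1_A(d-r+1)=h^1_A(d-r+2)=1$ and $h^1_A(n)=0$ otherwise satisfies every constraint you use: multiplication by a generic $\ell$ is injective up to degree $d-r+2$, the exact sequence at $n=d-r+3$ only gives $h^1_A(d-r+2)=\dim_K U_{d-r+3}\leq 1$ since $U$ is cyclic with generator in degree $d-r+3$, and even the socle information from the proof of Proposition \ref{5.9} (socle of $H^1(A)/\ell H^1(A)$ concentrated in degree $d-r+1$) is compatible with it; yet this profile has $H^1(A)\neq 0$ and $\delta(X)=d-r+2$, so no contradiction arises from your argument. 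To settle the second case one must actually exclude this borderline Hilbert function (equivalently, establish the descent inequality at $n=d-r+2$ itself), and that requires an input beyond the statements you combine; your sketch asserts this exclusion rather than proving it.
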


\begin{proof} This is immediate by Lemma \ref{4.5} (b), Proposition \ref{4.6} and Theorem \ref{5.10}.
\end{proof}

\begin{reminder} \label{5.12} (A) Let $h \in \mathbb N_0.$ A non-degenerate irreducible projective
surface $Y \subset \mathbb P^{r+h}_K$ is called a {\it projecting surface for} $X,$ if there is
a linear projection $\pi :  \mathbb P^{r+h}_K \setminus \mathbb P^{h-1}_K \twoheadrightarrow \mathbb P^r_K$ whose
center $\mathbb P^{h-1}_K$ is disjoint to $Y$ and such that by restricting $\pi$ we get an isomorphism
$\pi \upharpoonright : Y \stackrel{\simeq}{\to} X.$ Such projecting surfaces $Y \subset \mathbb P^{r+h}_K$ for $X$
exist if and only if $h \leq h^1(\mathbb P^r_K, {\mathcal I}_X(1)) = h^1_{A}(1).$ If $h = h^1_{A}(1),$ we
speak of {\it maximal projecting surfaces for } $X.$ Moreover, we call $h^1_A(1)$ the {\sl linear deficiency} of $X.$

(B) Now, let $h = h^1_A(1)$ and let $Y \subset \mathbb P^{r+h}_K$ be a maximal projecting surface
for $Y.$ If $B$ denotes the homogeneous coordinate ring for $Y,$ we have the inclusions
\[
A \hookrightarrow B = K[D(A)_1] \subseteq D(A) = \oplus_{n \in \mathbb Z} H^0(X, {\mathcal O}_X(n)),
\]
where $D(A) := \varinjlim \Hom_S((S_{+})^n, A)$ denotes the $S_{+}$-transform of $A.$ Observe that
\[
\dim_K D(A)_1 = r+h+1 \text{ and that } \deg Y = \deg X = d.
\]
As $Y \subset \mathbb P^{r+h}_K$ is non-degenerate
we have $d - \codim Y \geq 1,$ whence $d \geq r + h^1_{A}(1) -1.$
\end{reminder}

\begin{corollary} \label{5.13} Let $4 < r < d$ and assume that the surface $X \subset \mathbb P^r_K$ is of maximal
sectional regularity, of arithmetic depth one and Cohen-Macaulay. Then $d > 2r -5.$
\end{corollary}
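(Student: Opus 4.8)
The plan is to derive Corollary \ref{5.13} directly from Theorem \ref{5.10} by showing that the Cohen--Macaulay hypothesis forces $\delta(X) \leq d-r+1$, at which point Theorem \ref{5.10} immediately yields $d > 2r-5$. So the real content is: \emph{if $X$ is Cohen--Macaulay, of maximal sectional regularity and of arithmetic depth one, then $\delta(X) \leq d-r+1$}. First I would invoke Corollary \ref{5.11}, which already tells us that it suffices to check either $h^1_A(1) + h^2_A(0) \leq d-r+1$ or $\e(H^1(A)) \leq d-r+2$. So the task reduces to verifying one of these two numerical inequalities under the Cohen--Macaulay assumption.

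The cleanest route is via the second alternative, $\e(H^1(A)) \leq d-r+2$. Since $X$ is Cohen--Macaulay of dimension $2$ and arithmetic depth one, we have $H^2(A) = 0$ (depth one means only $H^1$ survives below the top), so $\reg X = \e(H^1(A)) + 2$. On the other hand $X$ is Cohen--Macaulay, hence satisfies the Eisenbud--Goto bound $\reg X \leq d - \codim X + 1 = d - (r-2) + 1 = d-r+3$ (the regularity conjecture is a theorem for arithmetically Cohen--Macaulay varieties). Combining, $\e(H^1(A)) = \reg X - 2 \leq d-r+1 \leq d-r+2$. Then Corollary \ref{5.11} gives $d > 2r-5$.

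Alternatively, if one prefers to avoid quoting the Cohen--Macaulay case of Eisenbud--Goto, one can argue through the first alternative. Here $X$ Cohen--Macaulay forces $H^2(A)_n = 0$ for $n < 0$ as well (depth $\geq 2$ would follow, but depth is exactly one, so at least $H^2_A(0)$ could a priori be nonzero); in fact, being Cohen--Macaulay of dimension two with depth one is a contradiction unless one reads "arithmetic depth one" as depth of the \emph{saturated} ring—so the more careful reading is that $A$ itself has depth one while $X$ is Cohen--Macaulay as a scheme, i.e. $e(X) = 0$. Then Remark \ref{4.8}(A) gives $h^2_A(n-1) \leq \max\{0, h^2_A(n)-1\}$ for $n \leq 0$, and since by Remark \ref{4.2} $h^2_A(n) = h^1(X,\mathcal O_X(n)) \to 0$, one gets $h^2_A(0) = 0$; combined with $h^1_A(1) \leq d-r+1$ (the standard bound recalled in the introduction, valid since $\sreg X = d-r+3$ forces the generic hyperplane section to be rational of degree $d$ so that $h^1_A(1) = h^1(\mathbb P^{r-1},\mathcal I_{\mathcal C}(1)) = d-r+1$ exactly — or at worst $\leq d-r+1$), this yields $h^1_A(1) + h^2_A(0) \leq d-r+1$, and again Corollary \ref{5.11} finishes.

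I expect the main obstacle to be purely bookkeeping about what "Cohen--Macaulay of arithmetic depth one" means and which cohomology modules vanish: one must pin down that the Cohen--Macaulay hypothesis on the scheme $X$ translates to $e(X) = 0$ (so that Remark \ref{4.8}(A) forces $h^2_A(0) = 0$) while still allowing $\depth A = 1$ coming from a nonzero $H^1(A)$. Once the vanishing $H^2(A)_{\leq 0} = 0$ is secured, everything else is a direct appeal to Corollary \ref{5.11}. The proof will accordingly be very short: reduce to Corollary \ref{5.11}, check the relevant numerical inequality using $e(X) = 0$ and Remark \ref{4.8}(A) together with the bound $h^1_A(1) \leq d-r+1$, and conclude.
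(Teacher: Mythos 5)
Your overall skeleton is the paper's: reduce to Corollary \ref{5.11} via the inequality $h^1_A(1)+h^2_A(0)\leq d-r+1$, using $h^1_A(1)\leq d-r+1$ (Reminder \ref{5.12} (B)) and reading the Cohen--Macaulay hypothesis as $e(X)=0$. But both of your ways of producing the needed $H^2$-vanishing have genuine gaps. The first route is unsound: local Cohen--Macaulayness of $X$ together with $\depth A=1$ does \emph{not} give $H^2(A)=0$ --- by Remark \ref{4.2} one has $h^2_A(n)=h^1(X,\mathcal{O}_X(n))$, a global invariant that is typically nonzero for locally CM surfaces --- and the Eisenbud--Goto theorem for \emph{arithmetically} Cohen--Macaulay varieties does not apply, since here $A$ has depth one (dimension three), so $X$ is not ACM. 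The second route correctly reduces matters to $h^2_A(0)=0$, but the inference from Remark \ref{4.8} (A) is a non sequitur: with $e(X)=0$ that remark gives $h^2_A(n-1)\leq\max\{0,h^2_A(n)-1\}$ for $n\leq 0$, i.e.\ it bounds the values at \emph{more negative} twists by those at less negative ones, so it never constrains $h^2_A(0)$ itself; knowing that $h^2_A(n)$ tends to $0$ (in either direction) adds nothing, e.g.\ $h^2_A(0)=3$, $h^2_A(-1)=2$, $h^2_A(-2)=1$, $h^2_A(n)=0$ for $n\leq-3$ is consistent with everything you invoked.

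The missing ingredient is precisely the hypothesis of maximal sectional regularity, which you never use at this step but the paper does. By Remark \ref{5.4} (A) it forces $\sigma(X)=0$ and $\#\Sing(X)<\infty$, and then Proposition \ref{4.9} applies: part (a)(i) gives $h^2_A(-1)=h^2_A(0)$, which combined with your descent inequality from Remark \ref{4.8} (A) and $e(X)=0$ does force $h^2_A(0)=0$; alternatively (this is the paper's wording) part (c) gives $h^2_A(n)=e(X)=0$ for all $n\leq 0$ when $d\leq 2r-2$ (and if $d>2r-2$ the conclusion $d>2r-5$ is trivial). Once $h^2_A(0)=0$ and $h^1_A(1)\leq d-r+1$ are in hand, Corollary \ref{5.11} finishes exactly as you say. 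So your reduction is the right one, but the key step $h^2_A(0)=0$ needs Remark \ref{5.4} (A) and Proposition \ref{4.9}, not Remark \ref{4.8} (A) alone, and the alternative argument via $\e H^1(A)\leq d-r+2$ should be dropped.
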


\begin{proof}  By Reminder \ref{5.12} (B) we have $h^1_A(1) \leq d-r+1.$
By Remark \ref{5.4} (A) we have $\sigma (X) = 0$ and $\# \Sing(X) < \infty.$ So, by
Proposition \ref{4.9} (c) we have $h^2_A(0) = e(X).$ As $X$ is Cohen-Macaulay, we have
$e(X) = 0.$ Now, we conclude by Corollary \ref{5.11}
\end{proof}

\section{Surfaces with high linear deficiency}
We keep all notation and hypotheses of sections 4 and 5. According to Reminder \ref{5.12} (B) we have
$h^1_{A}(1) \leq d-r+1.$ In this section, we shall consider the situation in which
$h^1_{A}(1)$ is "close to being maximal", more precisely the cases in which
\[
h^1_{A}(1) \in \{ d-r-1, d-r, d-r+1\}.\]
We start by recalling a few facts on certain surfaces $Y \subset \mathbb P^s_K$ of degree $s+1.$ 

\begin{reminder} \label{6.1} (A) Let $s \geq 5$ and let $Y \subset \mathbb P^s_K$ be a non-degenerate 
and irreducible surface of degree $s+1$ with homogeneous coordinate ring $B.$ Assume that $Y$ is a maximal 
projecting surface of the surface $X \subset \mathbb P^r_K$ so that $r \leq s$ and $h^1_B(1) = 0.$ 
Then, according to \cite{B1} the surface $Y$ must be one of the six types I, IIA, IIA', IIIA, IVA0, IVA1 
which were introduced there. For these 6 types one has the following facts (cf. \cite[Propositions 4.2, 4.11, 5.5 and 5.6]{B1}): 
\begin{itemize}
\item[(i)] Type I: $Y$ is arithmetically Cohen-Macaulay, $\sreg Y = 3$ and $\sigma(Y) = 2.$
\item[(ii)] Type IIA: $Y$ is Cohen-Macaulay, $\depth B = 2, \sreg Y = 3, h^2_B(0) = 1, h^2_B(n) = 0$ 
for all $n \not= 0$ and $\sigma (Y) = 2.$ 
\item[(iii)] Type IIA': $\sreg Y = 3, \depth B = 2, h^2_B(n) = e(Y) = 1$ for all $n \leq 0, h^2_B(n) = 0$ 
for all $n > 0$ and $\sigma (Y) = 0.$ 
\item[(iv)] Type IIIA: $\sreg Y = 3, \depth B = 2, h^2_B(n) = e(Y) = 2$ for all $n \leq 0, h^2_B(n) = 0$ 
for all $n > 0$ and $\sigma (Y) = 0.$ 
\item[(v)] Type IVA1: $\sreg Y = 4, \depth B = 2, h^2_B(n) = e(Y) = 3$ for all $n \leq 0, h^2_B(1) = 1, h^2_B(n) = 0$ 
for all $n > 1$ and $\sigma (Y) = 0.$
\item[(vi)] Type IVA0: $\sreg Y = 4, \depth B = 1, h^1_B(1) = 1, h^1_B(n) = 0$ for all $n \not= 0$ and the values 
of $h^2_B(n), e(Y)$ and $\sigma(Y)$ are as in statement (iv).
\end{itemize}

(B) Again, let $s \geq 5$ and $Y \subset \mathbb P^s_K$ a non-degenerate reduced and irreducible surface of degree $s+1$ 
with homogeneous coordinate ring $B.$ Assume this time that $h^1_B(n) = 2$ for $n = 1,2$ and $h^1_B(n) = 0$ for 
$n \not= 1,2.$ Then, according to \cite{B1}, the surface $Y$ must be one of the types IIC or IVC which were introduced 
there. For these two types one has (cf. \cite[Propositions 5.5, 5.6]{B1}). 
\begin{itemize}
\item[(i)] Type IIIC: $Y$ is Cohen-Macaulay, $\sreg Y = 3, H^2(B) = 0$ and $\sigma (Y) = 0.$ 
\item[(ii)] Type IVC: $Y$ is Cohen-Macaulay, $\sreg Y = 4, H^2(B) = 0$ and $\sigma (Y) = 0.$
\end{itemize} 
In both cases we also have $h^3_B(n) = 0$ for all $n \geq -1.$ By our assumption $\e H^1(B) = 2,$ and so in both cases 
$\reg Y = 4.$ 
\end{reminder}

\begin{lemma} \label{6.2} Let $A := A.$ Let $ h \in \{0, \ldots, h^1_A(1)\}$ and
let $Y \subset \mathbb P^{r+h}_K$ be a projecting surface for $X.$ Let $B$ denote
the homogeneous coordinate ring of $Y.$ Then:
\begin{itemize}
  \item[(a)] $h^1_B(1) = h^1_A(1) -h.$
  \item[(b)] $h^i_B(n) = h^i_A(n)$ for all $i \geq 2$ and all $n \in \mathbb Z.$
  \item[(c)] $\sigma (Y) = \sigma (X).$
  \item[(d)] $e(Y) = e(X).$
  \item[(e)] $\sreg Y \leq \sreg X \leq d-r+3.$ 
\end{itemize}
\end{lemma}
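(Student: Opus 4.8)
The plan is to reduce everything to one observation: by the definition of a projecting surface (Reminder~\ref{5.12}), the restriction $\pi \upharpoonright_Y \colon Y \to X$ is an isomorphism of $K$-varieties, and since $\pi$ is a \emph{linear} projection we have $(\pi \upharpoonright_Y)^{*}{\mathcal O}_X(1) \simeq {\mathcal O}_Y(1)$, so that $(Y,{\mathcal O}_Y(1))$ and $(X,{\mathcal O}_X(1))$ are isomorphic as polarized surfaces; in particular $H^i(Y,{\mathcal O}_Y(n)) \simeq H^i(X,{\mathcal O}_X(n))$ for all $i \geq 0$ and $n \in {\mathbb Z}$, and $Y \simeq X$ as abstract schemes. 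Equivalently, by Reminder~\ref{5.12}(B), $A$ and $B$ are degree-$1$-generated graded subalgebras of the common $S_+$-transform $D := \bigoplus_{n} H^0(X,{\mathcal O}_X(n))$ with $A \subseteq B \subseteq D$, $\dim_K A_1 = r+1$ and $\dim_K B_1 = r+h+1$. I expect (a)--(d) to follow formally, and (e) to need one further geometric step.

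For (b) I would invoke Remark~\ref{4.2}: $h^i_A(n) = h^{i-1}(X,{\mathcal O}_X(n)) = h^{i-1}(Y,{\mathcal O}_Y(n)) = h^i_B(n)$ for all $i \geq 2$. For (c) I would use that, by Remark~\ref{4.8}(D), each of $\sigma(X)$ and $\sigma(Y)$ is given by the same formula in the respective $h^2$'s and $h^3$'s, which agree by (b); equivalently, $\sigma$ is the Fujita sectional genus of the polarized pair, an invariant of $(X,{\mathcal O}_X(1)) \simeq (Y,{\mathcal O}_Y(1))$. For (d) I would note that $e(X)$ is built from the local cohomology of the local rings ${\mathcal O}_{X,x}$ at the closed points, hence depends only on the scheme $X \simeq Y$ (alternatively, $e(X) = h^2_A(n) = h^2_B(n) = e(Y)$ for $n \ll 0$ by Remark~\ref{4.8}(A)). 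For (a) I would use that, for a non-degenerate $V \subseteq {\mathbb P}^N_K$ with homogeneous coordinate ring $R$, one has $H^0({\mathbb P}^N_K,{\mathcal I}_V(1)) = 0 = H^1({\mathbb P}^N_K,{\mathcal O}(1))$, so the structure sequence of $V$ gives $h^1_R(1) = h^1({\mathbb P}^N_K,{\mathcal I}_V(1)) = h^0(V,{\mathcal O}_V(1)) - (N+1)$; applying this to $X \subseteq {\mathbb P}^r_K$ and $Y \subseteq {\mathbb P}^{r+h}_K$, subtracting, and using $h^0(X,{\mathcal O}_X(1)) = h^0(Y,{\mathcal O}_Y(1))$ gives $h^1_B(1) - h^1_A(1) = (r+1) - (r+h+1) = -h$.

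For (e), the inequality $\sreg X \leq d-r+3$ is Remark~\ref{5.2}(B), so the point is $\sreg Y \leq \sreg X$. Here is the plan: choose a hyperplane ${\mathbb H}_X \subset {\mathbb P}^r_K$ with $\reg(X \cap {\mathbb H}_X) = \sreg X$ (possible, generically, by Remark~\ref{5.2}(A)) and let ${\mathbb H}_Y \subset {\mathbb P}^{r+h}_K$ be the hyperplane through the centre ${\mathbb P}^{h-1}_K$ of $\pi$ with $\pi^{-1}({\mathbb H}_X) = {\mathbb H}_Y \setminus {\mathbb P}^{h-1}_K$. Since the centre is disjoint from $Y$, I expect $\pi$ to restrict to a linear projection ${\mathbb H}_Y \setminus {\mathbb P}^{h-1}_K \to {\mathbb H}_X$ carrying $\widetilde{\mathcal C} := Y \cap {\mathbb H}_Y$ isomorphically onto ${\mathcal C} := X \cap {\mathbb H}_X$, with ${\mathcal O}(1)$ matching ${\mathcal O}(1)$; thus ${\mathcal C}$ is a linear projection of the curve $\widetilde{\mathcal C} \subset {\mathbb H}_Y = {\mathbb P}^{r+h-1}_K$, and $\widetilde{\mathcal C}$ is a hyperplane section of $Y$, so $\sreg Y \leq \reg \widetilde{\mathcal C}$. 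Writing $R$ and $\widetilde R$ for the homogeneous coordinate rings of ${\mathcal C}$ and $\widetilde{\mathcal C}$ and $D'$ for their common section ring $\bigoplus_n H^0({\mathcal C},{\mathcal O}(n)) \simeq \bigoplus_n H^0(\widetilde{\mathcal C},{\mathcal O}(n))$, I would then observe that $R \subseteq \widetilde R \subseteq D'$ (both degree-$1$-generated, the coordinate forms of ${\mathbb H}_X$ pulling back to forms on ${\mathbb H}_Y$), that $H^1(R) \simeq D'/R \twoheadrightarrow D'/\widetilde R \simeq H^1(\widetilde R)$ since $R$ and $\widetilde R$ are $S_+$-torsion free, and that $H^2(R)_n \simeq H^1({\mathcal C},{\mathcal O}(n)) \simeq H^1(\widetilde{\mathcal C},{\mathcal O}(n)) \simeq H^2(\widetilde R)_n$; hence $\e H^1(\widetilde R) \leq \e H^1(R)$ and $\e H^2(\widetilde R) = \e H^2(R)$, so by the standard description of the regularity of a curve (cf.\ Remark~\ref{5.2}(A)) $\reg \widetilde{\mathcal C} \leq \reg {\mathcal C}$ and therefore $\sreg Y \leq \reg \widetilde{\mathcal C} \leq \reg {\mathcal C} = \sreg X$. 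The main obstacle will be (e): setting up the hyperplane through the centre so that it genuinely induces an isomorphism of hyperplane sections respecting the polarizations, and then exploiting the asymmetry that $\e H^2$ of the coordinate ring of a curve is intrinsic to the polarized curve (being the largest $n$ with $h^1({\mathcal C},{\mathcal O}(n)) \neq 0$), whereas $\e H^1$ can only decrease when one de-projects, i.e.\ enlarges the relevant subalgebra of $D'$.
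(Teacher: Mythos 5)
Your proposal is correct and in substance coincides with the paper's proof: parts (a)--(d) are exactly the argument via the inclusions $A \hookrightarrow B \hookrightarrow D(A) = D(B)$ (your sheaf-cohomological phrasing of (a)--(c) is the same computation), and part (e) rests on the same key comparison, namely an epimorphism from $H^1$ of the hyperplane-section ring of $X$ onto $H^1$ of the corresponding section ring of $Y$ together with an isomorphism on $H^2$, whence the inequality of ends and hence of (sectional) regularities. The only real difference is how that comparison is produced: the paper applies the snake lemma to $0 \to A \to B \to B/A \to 0$ with multiplication by $f \in S_1$, using that $B/A$ is $S_+$-torsion, to get $0 \to U \to A/fA \to B/fB \to V \to 0$ with $U,V$ torsion, whereas you first identify $Y \cap \mathbb H_Y$ with ${\mathcal C} = X \cap \mathbb H_X$ via the projection and then embed both saturated coordinate rings $R \subseteq \widetilde R$ in $\bigoplus_n H^0({\mathcal C},{\mathcal O}_{\mathcal C}(n))$ -- which is the same torsion-quotient situation after saturating, so the two renditions carry identical content.
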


\begin{proof} (a): We have the inclusions $A \hookrightarrow B \hookrightarrow D(A) = D(B),$ in which
$D(\cdot)$ denotes the formation of $S_{+}$-transform. It follows
\[
h^1_B(1) = \dim_K
D(B)_1 - \dim_K B_1 = \dim_K D(A)_1 - (\dim_K A_1 + h) = h^1_A(1) -h.\]

(b): As $D(A) = D(B)$ we have $H^i(B) = H^i(A)$ for all $i > 1.$

(c): Clear from statement (b) and Remark \ref{6.1}.

(d): Follows from $Y\simeq X.$

(e): In view of Remark \ref{5.2} (B) it is enough to show the first inequality. We write $C := B/A,$ 
choose $f \in S_1 \setminus \{0\}$ and consider the $S_+$-torsion modules $V := C/fC$ and 
$U := (0:_C f)(-1).$ As the multiplication maps $f : A(-1) \to A$ and $f : B(-1) \to B$ are injective 
the snake lemma yields a short exact sequence of graded $S$-modules 
\[
0 \to U \to A/fA \to B/fB \to V \to 0.
\]
As $U$ and $V$ are $S_+$-torsion we thus get an epimorphism $H^1(A/fA) \twoheadrightarrow H^1(B/fB)$ and 
an isomorphism $H^2(A/fA) \simeq H^2(B/fB).$ So $\e H^i(A/fA) \geq \e H^i(B/fB), i = 1,2.$ Whence 
$\reg \Proj(A/fA) \geq \reg \Proj(B/fB) \geq \sreg Y,$ as required.
\end{proof}

\begin{theorem} \label{6.3} Let $4 < r < d,$ set $h := h^1_A(1)$ and let
$Y \subset \mathbb P^{r+h}_K$ be a maximal projecting surface for $X.$
\begin{itemize}
  \item[(a)] If $h = d-r+1,$ then  $Y \subset \mathbb P^{d+1}_K$ is a smooth rational normal surface scroll,
  hence $X$ is smooth and rational with $H^2(A) = 0$ and $\sigma (X) = 0.$
  \item[(b)] If $h = d-r,$ then either
	\begin{itemize}
		\item[(i)] $Y \subset \mathbb P^d_K$ is a non-normal Del Pezzo
		  surface in the sense of \cite{BS4}, in particular $X$ is Cohen-Macaulay, $X\setminus \Nor(X)$ is a line,
                  $H^2(A) = 0$ and $\sigma (X) = 1,$ or
                \item[(ii)] $Y \subset \mathbb P^d_K$ is a surface of almost minimal degree of arithmetic depth 2,
                 in particular $\CM(X) = \Nor(X), X \setminus \Nor(X)$ consists of a single point, $h^2_A(n) = e(X) = 1$ for all $n \leq 0, h^2_A(n) = 0$ for all $n > 0,$ and $\sigma (X) = 0.$
         \end{itemize}
  \item[(c)] If $h = d-r-1,$ then $Y \subset \mathbb P^{d-1}_K$ is one of the six types (i) -- (vi) 
  listed in Reminder \ref{6.1} (A) and :
  \begin{itemize}
    \item[(i)] If $Y$ is of type I, then $X$ is Cohen-Macaulay, $H^2(A) = 0$ and $\sigma (X) = 2.$
    \item[(ii)] If $Y$ is of type II A, $X$ is Cohen-Macaulay, $h^2_A(0) = 1, h^2_A(n) = 0$ for all $n \not= 0$ and $\sigma (X) = 1.$
    \item[(iii)] If $Y$ is of type II A', $h^2_A(n) = e(X) = 1$ for all $ n \leq 0, h^2_A(n) = 0$ for all $n > 0$ and $\sigma (X) = 1.$
    \item[(iv)] If $Y$ is of type III A or IV A0, $h^2_A(n) = e(X) = 2$ for all $n \leq 0, h^2_A(n) = 0$ for all $n > 0$ and $\sigma (X) = 0.$
    \item[(v)] If $Y$ is of type IV A1, $h^2_A(n) = e(X) = 3$ for all $n \leq 0, h^2_A(1) = 1, h^2_A(n) = 0$ for all $n > 1$ and $\sigma (X) = 0.$
\end{itemize}
\end{itemize}
\end{theorem}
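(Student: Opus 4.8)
The plan is to route everything through the maximal projecting surface $Y\subset\mathbb P^{r+h}_K$, using that $Y\cong X$ as abstract schemes (Reminder~\ref{5.12}) and that Lemma~\ref{6.2} transports cohomology: $h^1_B(1)=h^1_A(1)-h=0$, $h^i_A(n)=h^i_B(n)$ for all $i\ge 2$ and all $n$, $e(X)=e(Y)$, $\sigma(X)=\sigma(Y)$. Since $Y$ is non-degenerate and irreducible of degree $d$ in $\mathbb P^{r+h}_K$, its codimension is $r+h-2$, so the three values $h=d-r+1,\;d-r,\;d-r-1$ are precisely those for which $\deg Y$ equals $\codim Y+1$, $\codim Y+2$, $\codim Y+3$; equivalently $Y$ is a surface of minimal degree, of almost minimal degree, or a surface of degree $s+1$ in $\mathbb P^s_K$ with $s=r+h=d-1$. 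I also use that $e(X)=0$ is just the condition that $X$ (equivalently $Y$) be Cohen--Macaulay as a scheme, that normality and the non-normal locus are intrinsic, and that $h\ge 1$ throughout since $d>r$. In each case I quote the appropriate structure theorem for $Y$ and read off the claims for $X$.

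For (a), $\deg Y=\codim Y+1$, so by the del Pezzo--Bertini classification $Y$ is the Veronese surface in $\mathbb P^5_K$ or a rational normal surface scroll (possibly a cone over a rational normal curve). As $4<r<d$ forces $d\ge 6$, the Veronese is impossible. To rule out the cone $Y=S(0,d)$ with vertex $v$: its infinitely many ruling lines lie on $Y$, hence are disjoint from the center of $\pi$ and are carried by $\pi$ to infinitely many lines of $\mathbb P^r_K$ through $\pi(v)$; since $X=\pi(Y)$ is irreducible of dimension $2$, it must be the union of these lines, i.e. a cone over a reduced, irreducible, non-degenerate curve $D\subset\mathbb P^{r-1}_K$. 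The homogeneous coordinate ring of such a cone is $A_D[x_r]$ with $A_D$ the coordinate ring of $D$; as the vanishing ideal of $D$ is saturated, $\depth A_D\ge 1$, hence $\depth A_D[x_r]\ge 2$ and $H^1(A)=0$, so $h^1_A(1)=0$, contradicting $h=d-r+1\ge 2$. Therefore $Y$ is a smooth rational normal surface scroll, whence $X\cong Y$ is smooth and rational; surfaces of minimal degree are arithmetically Cohen--Macaulay, so $H^2(A)=H^2(B)=0$, and $\sigma(X)=\sigma(Y)=0$ since a scroll has rational normal curve hyperplane sections.

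For (b), $Y\subset\mathbb P^d_K$ is a linearly normal surface of almost minimal degree; by the classification of such surfaces, when $Y$ is non-normal it is either a non-normal Del Pezzo surface in the sense of \cite{BS4} -- Cohen--Macaulay, non-normal locus a line, $\sigma=1$, $H^2=0$ -- or a surface of arithmetic depth $2$ whose single non-normal point is its only non-Cohen--Macaulay point, with $h^2_B(n)=e(Y)=1$ for $n\le 0$, $h^2_B(n)=0$ for $n>0$, $\sigma=0$. Transporting these via Lemma~\ref{6.2}, using that $\pi$ carries the non-normal line of $Y$ to a line of $\mathbb P^r_K$ and that $e$, normality and $H^i$ for $i\ge 2$ are preserved, yields the dichotomy (i)/(ii). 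For (c), $Y\subset\mathbb P^{d-1}_K=\mathbb P^s_K$ with $s=d-1\ge 5$ has degree $s+1$ and $h^1_B(1)=0$, so Reminder~\ref{6.1}(A) applies and $Y$ is one of the six listed types. For each type, Reminder~\ref{6.1}(A) records $\depth B$, the function $n\mapsto h^2_B(n)$, $e(Y)$ and $\sigma(Y)$; applying $h^2_A=h^2_B$, $e(X)=e(Y)$, $\sigma(X)=\sigma(Y)$ and the equivalence of $X$ being Cohen--Macaulay with $e(X)=0$ reads off the five sub-cases (i)--(v).

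The arithmetic of the functions $h^i_A(\cdot)$, together with $\sigma$ and $e$, is routine once the classifications are available, so the real content is having those three inputs in the needed generality (arbitrary characteristic, arbitrary singularities): del Pezzo--Bertini for (a), the classification of linearly normal surfaces of almost minimal degree for (b), and the \cite{B1}-classification of degree-$(s+1)$ surfaces in $\mathbb P^s_K$ for (c). The only step requiring a genuine argument is the exclusion of the cone in (a) sketched above. The persistent subtlety, to be checked case by case, is to keep apart the invariants that are intrinsic to the abstract surface $X\cong Y$ -- the weighted number $e$ of non-Cohen--Macaulay points, normality, the non-normal locus as an abstract curve, the modules $H^i(A)$ for $i\ge 2$, the sectional genus -- and thus transfer unchanged, from those that are embedding-dependent -- the regularity, $\depth A$, the shape of $H^1(A)$, the assertion that the non-normal locus is an honest line in $\mathbb P^r_K$ -- which must be handled through Lemma~\ref{6.2} and the explicit geometry of the projection $\pi$.
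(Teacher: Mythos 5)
Your overall route coincides with the paper's: pass to the maximal projecting surface $Y$, use Lemma \ref{6.2} to transport $h^1_B(1)=0$, $h^i_B=h^i_A$ ($i\ge 2$), $e$ and $\sigma$, and then quote the classification of surfaces of minimal degree for (a), of almost minimal degree for (b), and Reminder \ref{6.1}(A) for (c). Your exclusion of the cone in (a) is a correct variant: instead of the paper's observation that every line through the vertex is tangent to $Y$ (so no isomorphic projection exists), you show that $X$ would itself be a cone over a curve with saturated ideal, hence of arithmetic depth $\ge 2$, contradicting $h^1_A(1)=d-r+1\ge 2$; that argument is sound. (A small slip: your blanket claim that $h\ge 1$ follows from $d>r$ is false in general -- a linearly normal surface of degree $d>r$ has $h=0$ -- but it is harmless, since positivity of $h$ is only needed in case (a), where it is part of the case hypothesis.)

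The genuine gap is in part (b). The theorem asserts unconditionally that one of (i), (ii) holds, whereas you only argue ``when $Y$ is non-normal''. Everything you have established about $Y$ at that point -- irreducible, non-degenerate, linearly normal (since $h^1_B(1)=0$), of almost minimal degree in $\mathbb P^d_K$ -- is equally satisfied by a normal (maximal) Del Pezzo surface, which is arithmetically Cohen--Macaulay and normal and therefore fits neither (i) (non-normal with non-normal locus a line) nor (ii) (arithmetic depth $2$). Your case split silently drops this possibility, and nothing in your text rules it out; you would need an argument that a normal Del Pezzo surface cannot arise as the maximal projecting surface of an $X$ with $h^1_A(1)=d-r\ge 1$. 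The paper organizes (b) differently precisely at this point: linear normality gives arithmetic depth $t\ge 2$ by \cite[Proposition 3.1]{BS4}, and then the dichotomy is by $t\in\{2,3\}$, with \cite[Theorems 1.4 and 1.3]{BS4} invoked to conclude that $t=3$ forces $Y$ to be non-normal with $Y\setminus\Nor(Y)$ a line, while $t=2$ yields case (ii) via \cite[Theorems 4.2 and 1.3]{BS4} together with Remark \ref{4.8}. Whatever the ultimate justification of that exclusion, your proposal contains no substitute for it; as written you have proved only ``if $Y$ is non-normal then (i) or (ii) holds'', which is strictly weaker than statement (b).
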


\begin{proof}
Let $B$ be the homogeneous coordinate ring of $Y.$ According to Lemma \ref{6.2} (a) we have $h^1_B(1) = 0.$

(a): Let $h = d-r+1.$ Then, $Y \subset \mathbb P^{d+1}_K$ is of degree $d$ and hence
a surface of minimal degree. As $d > 5, Y$ cannot be the Veronese surface and hence, by
the classification of varieties of minimal degree, is either a smooth scroll or a cone over a
rational normal curve (cf. \cite[Theorem 19.9]{H}) . In the latter case, each line in $\mathbb P^{d+1}_K$ passing through the
vertex of $Y$ would be tangent to $Y$ and thus no linear projection $\mathbb P^{d+1}_K \setminus \mathbb P^{d-r}
\twoheadrightarrow \mathbb P^r_K$ with $\mathbb P^{d-r}_K \cap Y = \emptyset$ could induce an isomorphism.
So, $Y$ is smooth and rational and hence so is $X.$

Moreover $Y$ is arithmetically Cohen-Macaulay, so that $H^2(B) = 0.$ In addition $\sigma (X) = 0.$
By Lemma \ref{6.2} we get $H^2(A) = 0$ and $\sigma (X) = 0.$

(b): Let $h = d-r.$ Then $Y \subset \mathbb P^d_K$ is of almost minimal degree in the sense
of \cite{BS4}. As $h^1_B(1) = 0,$ the surface $Y$ is linearly normal and hence of arithmetic
depth $t  \geq 2,$ (cf. \cite[Proposition 3.1]{BS4}).

(i): If $t = 3,$ then $Y$ is not normal and $Y \setminus \Nor(Y)$ is a line (cf. \cite[Theorems 1.4, 1.3]{BS4}).
So, $Y$ is a non-normal maximal Del Pezzo surface in the sense of \cite{BS4}. Clearly $H^2(B) = 0.$
According to \cite[Theorem 6.2]{BS4} we also have $h^3_B(-1) = 1$ and hence $h^3_B(0)= 0.$ So, by
Remark \ref{4.8} (D) we get $\sigma (Y) = 1.$ Now our claims on $X$ follow by Lemma \ref{6.2} (and the
fact that $Y$ and $X$ are isomorphic by means of a projection.

(ii): If $t = 2,$ \cite[Theorem 4.2]{BS4} yields that $h^2_B(n) = 1$ for all $n \leq 0, h^2_B(n) = 0$
for all $n > 0$ and $h^3_B(n) = 0$ for all $n \geq -1.$ so, by Remark \ref{4.8} (A) and (D) we get 
$e(Y) = 1$ and $\sigma (Y) = 0.$ Moreover, by \cite[Theorem 1.3]{BS4} we
have $\CM(Y) = \Nor(Y).$ Now we get our claims on $X$ again by Lemma \ref{6.2} and the isomorphism
$Y \stackrel{\simeq}{\longrightarrow} X.$

(c): Let $h = d-r-1.$ Then $Y \subset \mathbb P^{d-1}_K$ is of degree $d \geq 6.$ 
Now, on use of Reminder \ref{6.1} (A) and by Lemma \ref{6.2} one easily proves
our claims.
\end{proof}

\section{A few examples}
We keep the prevoius notation and hypotheses. The aim of this section is to give a few examples of surfaces which illustrate the results of sections 5 and 6. If $X \subset \mathbb P^r_K = \Proj S$ with
$S = K[x_0,\ldots, x_r],$ is a non-degenerate surface with homogeneous
vanishing ideal $I,$ homogeneous coordinate ring $A$ and arithmetic depth $t,$ the
{\sl Betti diagram} of $X$ is the diagram of size $(r-1-t)\times (\reg X - 1)$ whose
entry in the $i$-th column and the $j$-th row is given by
\[
\beta_{i,j} : = \dim_k \Tor^S_i(K,A)_{i+j}, 1 \leq i \leq r+1-t, 1 \leq j \leq \reg X -1.
\]
All the occuring computations were performed on use of {\sc Singular} \cite{GP}.

We first present an example of a surface $X \subset \mathbb P^r_K$ of degree $d$ which satisfies 
$\reg X = d-r+3$ but is not of maximal sectional regularity (cf. Remark \ref{5.4} (B)). We also 
show that this surface $X$ has a reduced and irreducible hyperplane section curve $\mathcal D$ 
with $\reg {\mathcal D} = \reg X > \sreg X,$ (cf. Remark \ref{5.4} (C)). 

\begin{example} \label{7.1} We project the smooth rational surface scroll $Y := S(2,5)
\subset \mathbb P^8_K,$ which is given by the $2\times 2$-minors of the matrix
\[
\left(
\begin{array}{cc|ccccc}
x_0 & x_1 & x_3 & x_4 & x_5 & x_6 & x_7 \\
x_1 & x_2 & x_4 & x_5 & x_6 & x_7 & x_8
\end{array}
\right)
\]
from the line $L = \mathbb P^1_K \subset \mathbb P^8_K \setminus \operatorname{Sec}(Y)$
defined by $x_0 = x_1 = \ldots = x_4 = x_7 = x_8 = 0.$ We get a non-degenerate smooth
irreducible surface $X \subset \mathbb P^6_K$ of degree 7 which is of arithmetic depth 1,
of regularity 4 and has the Betti diagram
\[
\begin{array}{|c|cccccc|}
\hline
  & 1 & 2 & 3 & 4 & 5 & 6 \\
\hline
1 & 6 & 8 & 3 & 0 & 0 & 0 \\
2 & 4 &12 & 12& 4 & 0 & 0 \\
3 & 4 &18 & 32& 28& 12& 2 \\
\hline
\end{array}
\]
Computing $h^1_A(n) = \dim_K \Ext_S^6(A, S(-7))_n$ yields $h^1_A(n) = 2$ for $n = 1,2$
and $h^1_A(n) = 0$ for all $n \in \mathbb Z \setminus \{1,2\}.$ So, the surface $X$
must satisfy $\sreg X \in \{3,4\}$ (cf. Reminder \ref{6.1} (B) (ii), (ii)). 
In particular, we have $2 = \delta (X) = \e H^1(A) = \reg X -2$ (cf. Reminder \ref{6.1} 
(B) (iii)).

We write $S := K[x_0,x_1,\ldots,x_4,x_7,x_8]$ and $f := x_0-x_1-x_2-x_3-x_4-x_7-x_8$
and consider the hyperplane section ${\mathcal C} = X \cap \Proj (S/fS) \subset \Proj (S/fS) =
\mathbb P^5_K \subset \mathbb P^6_K$ of $X.$ Computing the primary decomposition of
$J := (I + fS)^{\sat}/fS \subset S/fS =: T$ we find that $J \in \Spec T$ so that
${\mathcal C} = \Proj(T/J) \subset \Proj T = \mathbb P^6_K$ is a non-degenerate irreducible curve.
The Betti diagram of the curve $Z$ presents itself as follows
\[
\begin{array}{|c|ccccc|}
\hline
  & 1 & 2 & 3 & 4 & 5 \\
\hline
1 & 6 & 8 & 3 & 0 & 0 \\
2 & 6 & 20& 24& 12& 2 \\
\hline
\end{array}
\]
In particular the hyperplane section $\mathcal C$ of $X$ satisfies $\reg {\mathcal C} = 3.$ Therefore, 
$\sreg X = 3 < 4 = \reg X = \deg X -6+3.$ So $X$
is not of maximal sectional regularity, whereas $\reg X$ takes the conjectured maximal value 4.

Let $X$ and $S$ be as before and let $g := x_0-x_1-x_8.$ Consider the hyperplane 
section ${\mathcal D} = \Proj A/gA = X \cap \Proj S/gS \subset \Proj S/gS$ 
of $X.$ Again by computing the primary decomposition of $(I + gS)^{\sat}/gS \subset S/gS$ we see that 
${\mathcal D} \subset \mathbb P^5_K$ is a non-degenerate reduced and irreducible curve. The Betti diagram 
of $\mathcal D$ is given by 
\[
\begin{array}{|c|ccccc|}
\hline
  & 1 & 2 & 3 & 4 & 5  \\
\hline
1 & 7 & 8 & 3 & 0 & 0  \\
2 & 0 & 6 & 8 & 3 & 0 \\
3 & 1 & 4 & 6 & 4 & 1 \\
\hline
\end{array}
\]
In particular we have indeed $\reg {\mathcal D} = 4 > \sreg X = 3.$
\end{example}

We now give a number of examples which illustrate Theorem \ref{6.3}.

\begin{example} \label{7.2} We project the smooth rational surface scroll $Y = S(1,8) \subset
\mathbb P^{10}_K$ given by the $2\times 2$-minors of the matrix
\[
\left(
\begin{array}{c|cccccccc}
x_0 & x_2 & x_3 & x_4 & x_5 & x_6 & x_7 & x_8 & x_9 \\
x_1 & x_3 & x_4 & x_5 & x_6 & x_7 & x_8 & x_9 & x_{10}
\end{array}
\right)
\]
from the $3$-space $\mathbb P^3_K \subset \mathbb P^{10}_K \setminus \operatorname{Sec} Y$
given by $x_0= \ldots = x_4= x_9 = x_{10} = 0.$ We get a non-degenerate smooth irreducible
surface $X \subset \mathbb P^6_K$ of degree 9 which is of arithmetic depth 1, of regularity
6 and has the Betti diagram
\[
\begin{array}{|c|cccccc|}
\hline
  & 1 & 2 & 3 & 4 & 5 & 6 \\
\hline
1 & 6 & 8 & 3 & 0 & 0 & 0 \\
2 & 0 & 0 & 0 & 0 & 0 & 0 \\
3 & 4 & 12& 12& 4 & 0 & 0 \\
4 & 4 & 18& 32& 28& 12& 2 \\
5 & 6 & 28& 52& 48& 22& 4 \\
\hline
\end{array}
\]
A computation of $h^1_A(n)$ gives the following table for the non-vanishing values of the
Hartshorne-Rao function
\[
\begin{array}{|c|cccc|}
\hline
n & 1 & 2 & 3 & 4 \\
\hline
h^1_A(n) & 4 & 8 & 8 & 4 \\
\hline
\end{array}
\]
In particular $\delta (X) = 3 < \e H^1(A) = 4.$ Moreover $h^1_A(1) = 4 = \deg X - 6 +1,$ so that
$X \subset \mathbb P^6_K$ is of type (a) of Theorem \ref{6.3}. In accordance with Theorem
\ref{6.3} the projecting surface $Y \subset \mathbb P^9_K$ is a smooth rational surface
scroll and computing $H^2(A) \simeq \Hom_A(\Ext^5_S(A, S(-7)), K)$ confirms that $H^2(A)$
vanishes, as predicted.

Now, let $f \in S_1 \setminus \{0\}.$ Then, the induced exact sequence
\[
0 \to H^0(A/fA)_n \to H^1(A)_{n-1} \stackrel{f}{\to} H^1(A)_n \to H^1(A/fA)_n \to 0
\]
and the above table for the values of $h^1_A(n)$ imply that $h^1_{A/fA}(1) = 4$ and
$h^1_{A/fA}(2) \geq 4.$ This shows, that statement (c) of Proposition \ref{4.3} need not
hold if $d = 2r -3.$ So, concerning statement (c), the bound on the degree required
in Proposition \ref{4.3} is sharp.

According to Bertini there is a (unique maximal) dense open set $U \subset S_1 \setminus \{0\}$
such that ${\mathcal C} := \Proj A/fA = X \cap \Proj S/fS \subset \Proj S/fS = \mathbb P^5_K$
is a non-degenerate reduced irreducible curve of degree 9 for all $f \in U.$ If for some
$f \in U$ the curve ${\mathcal C} = \Proj A/fA$ is of maximal regularity (that is of
regularity 6) the inequality $h^1_{A/fA}(2) \geq 4 > 3 = \deg {\mathcal C} -5 -1$ implies
(cf. \cite[Theorem 3.3]{BS2}) that the union of $\mathcal C$ with an extremal secant line
$\mathbb L = \mathbb P^1_K$ of $\mathcal C$ is never arithmetically Cohen-Macaulay.
We write $S = K[x_0,\dots , x_4,x_9,x_{10}]$ and choose $f := x_1 - x_2.$ Then, computing
the primary decomposition of $J := (I+fS)^{\sat}/fS \subset S/fS =: T$ we see that $J \in
\Spec T$ so that the hyperplane section ${\mathcal C} = \Proj T/J = X \cap \Proj T \subset
\Proj T = \mathbb P^5_K$ is a non-degenerate reduced irreducible curve of degree 9 of arithmetic  depth 1 satisfying $\reg {\mathcal C} = 6.$ The Betti diagram of $\mathcal C$ is
given by
\[
\begin{array}{|c|ccccc|}
\hline
  & 1 & 2 & 3 & 4 & 5 \\
\hline
1 & 6 & 8 & 3 & 0 & 0 \\
2 & 2 & 4 & 0 & 0 & 0 \\
3 & 1 & 4 & 10& 6 & 1 \\
4 & 0 & 0 & 0 & 0 & 0 \\
5 & 1 & 4 & 6 & 4 & 1 \\
\hline
\end{array}
\]
Clearly $\mathcal C$ is a curve of maximal regularity. By the previous observation
for each secant line $\mathbb L$ of $\mathcal C$ the union
${\mathcal C} \cup \mathbb L$ is not arithmetically Cohen-Macaulay.
\end{example}

The next two examples shall illustrate the statements of Theorem \ref{6.3} (b) (i) and (ii).

\begin{example} \label{7.3} (A) We start with the same scroll $W = S(1,8) \subset \mathbb P^{10}_K$ as in the previous example. We project from the point $p = (0:1:1:0: \ldots :0) \in
\mathbb P^{10}_K \setminus W$ by means of the map $(x_0:x_1: \ldots : x_{10}) \mapsto
(x_0:x_1 - x_2: x_3: \ldots :x_{10}).$ We then get a non-degenerate reduced irreducible surface
$Y \subset \mathbb P^9_K$ of degree 9 whose Betti diagram has the shape
\[
\begin{array}{|c|ccccccc|}
\hline
  & 1 & 2 & 3 & 4 & 5 & 6 & 7 \\
\hline
1 & 27&105&189&189&105& 27& 0\\
2 & 0 & 0 & 0 & 0 & 0 & 0 & 1 \\
\hline
\end{array}
\]
So $Y \subset \mathbb P^9_K$ is a surface of almost minimal degree which is arithmetically
Cohen-Macaulay. So $Y$ is maximally Del Pezzo (cf. \cite[Theorem 7.2]{BS4}) and not
normal (cf. \cite[Theorem 1.3 (a)]{BS4}). After having introduced new coordinates, we 
now canonically project $Y \subset \mathbb P^9_K
= \Proj S, S = K[y_0,\ldots, y_9],$ from the plane $\mathbb P^2_K \subset \mathbb P^9_K
\setminus Y$ given by $y_0 = y_1 = y_2 = y_3 = y_4 = y_8 = y_9 = 0.$ What we get is a non-degenerate reduced irreducible surface $X \subset \mathbb P^6_K$ of degree 9 with the following
Betti diagram
\[
\begin{array}{|c|cccccc|}
\hline
  & 1 & 2 & 3 & 4 & 5 & 6 \\
\hline
1 & 4 & 2 & 0 & 0 & 0 & 0 \\
2 & 6 & 21& 20& 6 & 0 & 0 \\
3 & 0 & 0 & 0 & 0 & 0 & 0 \\
4 & 5 & 23& 42& 38& 17& 3\\
\hline
\end{array}
\]
So, $X$ is of arithmetic depth 1 and of regularity 5. Moreover, the non-vanishing values
of the Hartshorne-Rao function are computed as follows
\[
h^1_A(1) = 3, h^1_A(2) = 4, h^1_A(3) = 3.
\]
As $h^1_A(1) = 3 = \deg X - 6$ we are in the situation of statement (b) of Theorem
\ref{6.3} and the morphism $Y \to X$ induced by our projection is an isomorphism. So,
$Y$ is a maximal projecting surface for $X$ which is non-normal Del Pezzo. So, we actually
are in the case (b) (i) of Theorem \ref{6.3}.

As $\reg X = 5 < \deg X -6 + 3,$ the surface $X$ cannot be of maximal sectional regularity
(cf. Remark \ref{5.2} (B)). To illustrate this directly, we consider the hyperplane section curve ${\mathcal C} := \Proj A/fA
\subset \Proj S/fS$ with $f := x_1 - x_5 + x_{10}.$ Computing the primary
decomposition of the vanishing ideal $J \subset S/fS$ of $\mathcal C$ we see that this
curve is indeed reduced and irreducible. The Betti diagram of $\mathcal C$ is computed to be
\[
\begin{array}{|c|ccccc|}
\hline
  & 1 & 2 & 3 & 4 & 5 \\
\hline
1 & 5 & 2 & 0 & 0 & 0 \\
2 & 2 & 15& 16& 5 & 0 \\
3 & 0 & 0 & 0 & 0 & 0 \\
4 & 1 & 4 & 6 & 4 & 1 \\
\hline
\end{array}
\]
So we have indeed $\reg {\mathcal C} = 5 < \deg {\mathcal C} - 5 + 2.$

(B) As in part (A) we start with the smooth surface scroll $W = S(1,8) \subset \mathbb P^{10}_K.$ We project $W$ canonically from the point $q = (0:0:0:1:0: \ldots :0) \in \mathbb
P^{10}_K \setminus W$ and get again a surface $Y \subset \mathbb P^9_K = \Proj K[x_0,x_1,x_2,x_4, \ldots, x_{10}]$ of degree 9
whose Betti diagram has the shape
\[
\begin{array}{|c|cccccccc|}
\hline
  & 1 & 2 & 3 & 4 & 5 & 6 & 7 & 8 \\
\hline
1 & 26& 98&168&154& 70& 6 & 0 & 0 \\
2 & 1 & 7 & 21& 35& 35& 28& 9 & 1 \\
\hline
\end{array}
\]
So, $Y$ is a surface of almost minimal degree and arithmetic depth 2.

We project $Y$ again canonically from the plane $\mathbb P^2_K \subset \mathbb P^9_K \setminus
Y$ given by $x_0 = x_1 = x_2 = x_4 = x_5 = x_9 = x_{10}$ and get a surface $X \subset \mathbb
P^6_K$ of degree 9 whose Betti diagram is as follows
\[
\begin{array}{|c|cccccc|}
\hline
  & 1 & 2 & 3 & 4 & 5 & 6 \\
\hline
1 & 3 & 2 & 0 & 0 & 0 & 0 \\
2 & 11& 31& 30& 11& 1 & 0 \\
3 & 0 & 0 & 0 & 0 & 0 & 0 \\
4 & 5 & 23& 42& 38& 17& 3 \\
\hline
\end{array}
\]
In particular $X$ is of arithmetic depth 1 and we have $\reg X = 5 < \deg X - 6 + 3.$ The
non-vanishing values of the Hartshorne-Rao function are the same as in the example
presented in part (A), so that in particular $\delta (X) = 2 < \e H^1(A)$ and $h^1_A(n) =
\deg X - 6.$ The last equation shows again that $Y$ is a projecting surface for $X$ and hence we are in the case of statement (b)(ii) of Theorem \ref{6.3}. A computation
confirms that $h^2_A(n) = \dim_K \Ext_S^5(A, S(-7))_{-n}$ equals 1 for all $b \leq 0$ and
vanishes for all $n > 0.$ As $\reg X = 5 < \deg X -6 + 3,$ the surface $X \subset \mathbb P^6_K$ cannot be of maximal sectional regularity (cf. Remark \ref{5.2} (B)). This can be confirmed
directly by considering the hyperplane section curve ${\mathcal C} = \Proj A/fA \subset
\Proj S/fS = \mathbb P^5_K$ with $f = x_1-x_2,$ which is verified
to be reduced and irreducible and has the Betti diagram
\[
\begin{array}{|c|ccccc|}
\hline
  & 1 & 2 & 3 & 4 & 5 \\
\hline
1 & 4 & 2 & 0 & 0 & 0 \\
2 & 7 & 25& 26& 10& 1 \\
3 & 0 & 0 & 0 & 0 & 0 \\
4 & 1 & 4 & 6 & 4 & 1 \\
\hline
\end{array}
\]
So indeed $\reg {\mathcal C} = 5 < \deg {\mathcal C} - 5 + 2.$ Observe that the necessary
condition "$d > 2r -5$" of Theorem \ref{6.3} is satisfied here. So, the present
example shows that (in the case (b) (ii) of Theorem \ref{6.3}) this condition is not sufficient to guarantee that $X$ is of maximal sectional regularity.
\end{example}

We now present a few examples which fall under the cases (c) of Theorem \ref{6.3}.

\begin{example} \label{7.4} (A) Let $W \subset \mathbb P^9_K$ denote the non-normal
Del Pezzo surface considered in Example \ref{7.3} (A). We rename our indeterminates
and write $\mathbb P^9_K = \Proj (K[x_0,\ldots, x_9]).$ We project $W$ from the point
$p := (1:0:\ldots:0:1)$ by means of the map $(x_0:\ldots :x_9) \mapsto (x_0-x_9:x_1 : \ldots : x_8)$ and get a surface $Y \subset \mathbb P^8_K$ of degree 9 whose Betti diagram is
\[
\begin{array}{|c|cccccc|}
\hline
  & 1 & 2 & 3 & 4 & 5 & 6 \\
\hline
1 & 19& 58& 75& 44& 5 & 0 \\
2 & 0 & 0 & 0 & 0 & 6 & 2 \\
\hline
\end{array}
\]
In particular $Y$ now is arithmetically Cohen-Macaulay. Moreover the canonical module
$K(A) = K^3(A) = \Ext^6_S(A,S(-9))$ is computed to be minimally generated by 2
homogeneous elements of degree 1. So, in particular $Y$ must be of type I (cf. Reminder 
\ref{6.1} (A)).

Now, we project $Y$ from the line $\mathbb P^1_K \subset \mathbb P^8_K \setminus Y$ given by
$x_0 = x_1 = x_2 = x_3 = x_6 = x_7 = x_8 = 0$ and get a surface $X \subset \mathbb P^6_K$ of degree 9 with the
diagram
\[
\begin{array}{|c|cccccc|}
\hline
  & 1 & 2 & 3 & 4 & 5 & 6 \\
\hline
1 & 3 & 2 & 0 & 0 & 0 & 0 \\
2 & 13& 39& 42& 19& 3 & 0 \\
3 & 1 & 5 & 10& 10& 5 & 1 \\
\hline
\end{array}
\]
The non-vanishing values of the Hartshorne-Rao function are computed to be $h^1_A(1) = 2,
h^1_A(2) = 2$ and $H^2(A)$ turns out to be 0. So, we must be in the situation described
in statement (c) (i) of Theorem \ref{6.3}. Again we have $\delta(X) = \e H^1(A) = 2 < 6 =
\deg X -6 + 3$ and $\reg X = 4$ so that $X$ surely cannot be of maximal sectional regularity.

Indeed, by computation it turns out that the hyperplane section curve ${\mathcal C} =
\Proj A/fA$ of $X$ is reduced and irreducible for $f = x_0-x_6$ and $f = x_1-x_2.$ It is
interesting to note that in the first case $\reg {\mathcal C}$ takes the maximally possible
value $4 = \reg X,$ where as in the second case we have $\reg {\mathcal C} = 3.$

(B) Our next example is of the type mentioned under statement \ref{6.3} (c) (ii). We first
project the scroll $W = S(3,6) \subset \mathbb P^{10}_K = \Proj K[x_0,\ldots,x_{10}]$
given by the $2\times 2$-minors of the matrix
\[
\left(
\begin{array}{ccc|cccccc}
x_0 & x_1 & x_2 & x_4 &x_5 & x_6 & x_7 & x_8 & x_9 \\
x_1 & x_2 & x_3 & x_5 &x_6 & x_7 & x_8 & x_9 & x_{10}
\end{array}
\right)
\]
canonically from the line $\mathbb P^1_K \subset \mathbb P^{10}_K \setminus W$ defined by
$x_0 = x_1 = x_3 = x_4 = \ldots = x_8 = x_{10} = 0.$ We get a surface $Y \subset \mathbb P^8_K =
\Proj (K[x_0,x_1,x_3, \ldots, x_8,x_{10}])$ of degree 9, arithmetic depth 2, regularity 3 and having the Betti diagram
\[
\begin{array}{|c|ccccccc|}
\hline
  & 1 & 2 & 3 & 4 & 5 & 6 & 7 \\
\hline
1 & 18& 52& 60& 24& 0 & 0 & 0 \\
2 & 1 & 6 & 15& 30& 27& 9 & 1 \\
\hline
\end{array}
\]
In particular $Y$ can only be of type IIA, IIA' or III A (cf. Reminder \ref{6.1} (A)). We now project $Y$ canonically
from the line $\mathbb P^1_K \subset \mathbb P^8_K \setminus Y$ given by $x_0 - x_8 = x_1 =
x_3 = x_4 = x_5 = x_7 = x_{10}.$ We obtain a surface $X \subset \mathbb P^6_K$ of degree
9 with Betti diagram
\[
\begin{array}{|c|cccccc|}
\hline
  & 1 & 2 & 3 & 4 & 5 & 6 \\
\hline
1 & 3 & 0 & 0 & 0 & 0 & 0 \\
2 & 9 & 30& 27& 8 & 0 & 0 \\
3 & 3 & 15& 29& 27& 12& 2 \\
\hline
\end{array}
\]
In particular $X$ is of arithmetic depth 1 and satisfies $\reg X = 4.$ The non-vanishing
values of the Hartshorne-Rao function of $X$ are computed to be $h^1_A(n) = 2$ for $n = 1,2.$
In particular $h^1_A(1) = 2 = \deg X - 6 -1,$ so that $Y \subset \mathbb P^8_K$ is a
maximal projecting surface for $X$ and we must be in one of the cases mentioned in
statement (c) of Theorem \ref{6.3}. Another computation yields that $h^2_A(0) = 1$ and
$h^2_A(n) = 0$ for all $n \not= 0.$ So, we must be in the case (c) (ii). If $f \in
S_1 \setminus \{0\}$ is a linear form such that the hyperplane section curve
${\mathcal C} = \Proj A/fA$ of $X$ is reduced and irreducible, we must have $3 \leq
\reg {\mathcal C} \leq 4.$ Choosing $f := x_3 - x_4$ we get indeed a reduced irreducible
hyperplane section curve $\mathcal C$ of regularity 4.

(C) Next, we present an example for the case (c) (iii) of Theorem \ref{6.3}. Again we 
start with the scroll $W = S(1,8) \subset \mathbb P^{10}_K$ and project $W$ from the line 
$\mathbb P^1_K \subset \mathbb P^{10}_K \setminus W$ given by $x_0 = x_1-x_2 = x_3 = x_5 = \ldots = 
x_{10} = 0.$ We thus get a surface $Y \subset \mathbb P^8_K$ of degree nine whose 
Betti diagram is the same as the Betti diagram of the surface $Y$ in part (B).

So, the homogeneous coordinate ring $B$ of $Y$ has depth 2. Moreover, $H^2(B)$ is calculated 
to be isomorphic to the Matlis dual of $K[x_0].$ So, $Y$ falls under the type IIA' mentioned 
in Reminder \ref{6.1} (A) (iii). Now, we project $Y$ from the point $(0:\ldots:0:1:0:0) \in 
\mathbb P^8_K \setminus Y$ and get a surface $X \subset \mathbb P^7_K$ of degree 9 with Betti diagram 
\[
\begin{array}{|c|ccccccc|}
\hline
  & 1 & 2 & 3 & 4 & 5 & 6 & 7 \\
\hline
1 & 9 & 11 & 0 & 0 & 0 & 0 & 0 \\
2 & 5 & 36 & 81 & 75 & 36 & 9 & 1 \\
\hline
\end{array}
\]
whose homogeneous coordinate ring $A$ satisfies $H^1(A) \simeq K(-1).$ Now clearly we are 
in the requested case with $d = 9, r = 7$ and $h = 1.$ As $\sreg Y = 3$ by Reminder \ref{6.1} 
(A) (iii) we must have $\sreg X \geq 3$ (cf. Lemma \ref{6.2} (e)). For various linear forms,  
for instance $f = x_0-x_1-x_5-x_6,$ we computed $\reg \Proj (A/fA) = 3,$ whence $\sreg X = 3.$ 

(D) We now present an example for the case (c) (iv) of Theorem \ref{6.3}. We first project
the scroll $W = S(3,6) \subset \mathbb P^{10}_K$ of part (B) from the line $\mathbb P^1_K
\subset \mathbb P^{10}_K \setminus W$ given by $x_0 = x_2 = \ldots = x_8 = x_{10} = 0.$
We then get a surface $Y \subset \mathbb P^8_K = \Proj( K[x_0,x_2, \ldots, x_8,x_{10}])$
of degree 9 whose Betti diagram is given by
\[
\begin{array}{|c|ccccccc|}
\hline
  & 1 & 2 & 3 & 4 & 5 & 6 & 7 \\
\hline
1 & 17& 46& 45& 8 & 0 & 0 & 0 \\
2 & 2 & 12& 34& 65& 48& 16& 2 \\
\hline
\end{array}
\]
In particular, $Y$ is of arithmetic depth 2 and satisfies $\reg Y = 3.$ So $Y$ must be of
type IIA, IIA' or IIIA (cf. Reminder \ref{6.1} (A)). We now project $Y$ canonically from 
the point $p = (0:0:0:0:0:1:0:0:0)\in \mathbb P^8_K \setminus Y$ and get a surface 
$X \subset \mathbb P^7_K$ of degree 9 and regularity 3 with Betti diagram
\[
\begin{array}{|c|ccccccc|}
\hline
  & 1 & 2 & 3 & 4 & 5 & 6 & 7 \\
\hline
1 & 8 & 12& 3 & 0 & 0 & 0 & 0 \\
2 & 12& 54&101& 90& 42& 10& 1 \\
\hline
\end{array}
\]
In particular, $X$ is of arithmetic depth 1. Moreover by computation we get $H^1(A) \simeq
K(-1)$ and $h^2_A(n) = 2$ for all $n \leq 0.$ This shows that $Y$ is a maximal projecting
surface for $X$ and that we are in the case (c)(iv) of Theorem \ref{6.3}. The intersection
curve ${\mathcal C} = \Proj (A/(x_0-x_{10})A)$ is computed to be reduced and irreducible and 
to satisfy $\reg {\mathcal C} = 3.$ As $X$ is of arithmetic depth 1, each hyperplane section 
of $X$ must be of regularity $\geq 3.$ So, the generic hyperplane section of $X$ must be a 
curve of regularity 3. In particular $X$ cannot be of maximal sectional regularity. 

(E) Finally we present an example for the case (c) (v) of Theorem \ref{6.3}. This time we project 
the scroll $W = S(1,8) \subset \mathbb P^{10}_K$ from the line $\mathbb P^1_K \subset \mathbb P^{10} 
\setminus W$ given by $x_0 = x_1 = x_2 = x_5 = \ldots = x_{10} = 0$ and get a surface $Y \subset 
\mathbb P^8_K$ of degree 9 with Betti diagram
\[
\begin{array}{|c|ccccccc|}
\hline
  & 1 & 2 & 3 & 4 & 5 & 6 & 7 \\
\hline
1 & 18& 52& 60& 24& 5 & 0 & 0 \\
2 & 0 & 0 & 0 & 15& 12& 3 & 0 \\
3 & 1 & 6 & 15& 20& 15& 6 & 1 \\
\hline
\end{array}
\]
So the homogeneous coordinate ring $B$ of $Y$ has depth 2. Moreover a computation furnishes 
that $H^2(B) \simeq (K[x_0,x_1,x_2]/(x_0,x_1)^2){\check{}}(-1).$ So, $Y$ must be of type IVA1 
of Reminder \ref{6.1} (A) (v). Now, we project $Y$ from the point $(0:\ldots:0:1:1:0:0) 
\in \mathbb P^8_K \setminus Y,$ and get a surface $X \subset \mathbb P^7_K$ of degree nine 
with Betti diagram 
\[
\begin{array}{|c|ccccccc|}
\hline
  & 1 & 2 & 3 & 4 & 5 & 6 & 7 \\
\hline
1 & 9& 12& 3& 0& 0 & 0 & 0 \\
2 & 5 & 34 & 71 & 65& 31& 8 & 1 \\
3 & 1 & 5 & 10& 10& 5& 1 & 0 \\
\hline
\end{array}
\]
In particular the homogeneous coordinate ring $A$ of $X$ has depth 1, and a further computation 
shows that $H^1(A) \simeq K(-1).$ So, this time we are in the case (c) (v) of Theorem \ref{6.3} 
with $d = 9, r = 7$ and $h = 1.$ Again by Lemma \ref{6.2} (e), Reminder \ref{6.1} (A) (v) and 
by computing $\reg \Proj (A/fA)$ for some linear forms $f,$ e.g. $f = x_0-x_2+x_{10},$ we obtain that $\sreg X = 4.$ 

We now return to the surface $Y \subset \mathbb P^8_K.$ The Betti diagram of $Y$ tells us, that 
the homogeneous vanishing ideal $J \subset S = K[x_0,x_1,x_2,x_5,\ldots,x_{10}]$ of $Y$ is generated by 18 
quadrics and one quartic $Q.$ A {\sc Singular} computation gives $Q = x_1^3x_2-x_0^3x_5$ and 
$L := J_2S :_S Q = (x_5,x_6,x_7,x_8,x_9,x_{10}).$ So $\mathbb E := \Proj S/L = \mathbb P^2_K$ 
is a plane and $J + L = L + QS$ tells us that $Y \cap \mathbb E$ is the quartic defined in $\mathbb E$ 
by $Q.$ So, $Y$ admits a whole plane of $4 = (\deg Y -8 +3)$-secant lines, which is in accordance with the 
fact that $Y$ is of maximal sectional regularity (cf. Corollaries \ref{5.6} and \ref{5.7}). 
\end{example}

\begin{remark} \label{7.5} The {\sc Singular} files of the examples in this section are available 
upon request by the authors.
\end{remark}

\end{document}